\newtheorem{theorem}{Theorem}[]
\newtheorem{corollary}{Corollary}
\newtheorem{lemma}{Lemma}
\newtheorem{remark}{Remark}
\newtheorem{definition}{Definition}
\newtheorem{prop}{Proposition}
\DeclarePairedDelimiter\ceil{\lceil}{\rceil}
\DeclarePairedDelimiter\floor{\lfloor}{\rfloor}
\begin{document}

\begin{frontmatter}

\title{Higher Order Langevin Monte Carlo Algorithm}
\runtitle{Higher Order Langevin Monte Carlo Algorithm}

\begin{aug}

\author{\fnms{Sotirios} \snm{Sabanis}\ead[label=e1]{s.sabanis@ed.ac.uk}}

\address{School of Mathematics, University of Edinburgh, UK\\
Alan Turing Institute, UK\\
\printead{e1}}

\author{\fnms{Ying} \snm{Zhang}\thanksref{t1}\ead[label=e2]{ying.zhang@ed.ac.uk}}

\address{Maxwell Institute for Mathematical Sciences, School of Mathematics, University of Edinburgh, UK\\
\printead{e2}}

\thankstext{t1}{Ying Zhang was supported by The Maxwell Institute Graduate School in Analysis and its Applications, a Centre for Doctoral Training funded by the UK Engineering and Physical Sciences Research Council (grant EP/L016508/01), the Scottish Funding Council, Heriot-Watt University and the University of Edinburgh. This work was supported by The Alan Turing Institute under the EPSRC grant EP/N510129/1 (Turing award number TU/B/000026).}

\runauthor{S. Sabanis and Y. Zhang}

\affiliation{University of Edinburgh and Alan Turing Institute}
\end{aug}

\begin{abstract}
A new (unadjusted) Langevin Monte Carlo (LMC) algorithm with improved rates in total variation and in Wasserstein distance is presented. All these are obtained in the context of sampling from a target distribution $\pi$ that has a density $\hat{\pi}$ on $\mathbb{R}^d$ known up to a normalizing constant. Moreover, $-\log \hat{\pi}$ is assumed to have a  locally Lipschitz gradient and its third derivative is locally H\"{o}lder continuous with exponent $\beta \in (0,1]$. Non-asymptotic bounds are obtained for the convergence to stationarity of the new sampling method with convergence rate $1+ \beta/2$ in Wasserstein distance, while it is shown that the rate is 1 in total variation even in the absence of convexity. Finally, in the case where $-\log \hat{\pi}$ is strongly convex and its gradient is Lipschitz continuous, explicit constants are provided.
\end{abstract}

\begin{keyword}[class=MSC]
\kwd[Primary ]{62L10}
\kwd{65C05}
\end{keyword}

\begin{keyword}
\kwd{Markov chain Monte Carlo}
\kwd{higher order algorithm}
\kwd{rate of convergence}
\kwd{machine learning}
\kwd{sampling problem}
\kwd{super-linear coefficients}
\end{keyword}

 \received{\smonth{11} \syear{2018}}


\end{frontmatter}

\section{Introduction}
In Bayesian statistics and machine learning, one challenge, which has attracted substantial attention in recent years due to its high importance in data-driven applications, is the creation of algorithms which can efficiently sample from a high-dimensional target probability distribution $\pi$. In particular, its smooth version assumes that there exists a density on $\mathbb{R}^d$, denoted by $\hat{\pi}$, such that
\[
\hat{\pi} (x)= e^{-U(x)}/\int_{\mathbb{R}^d}e^{-U(y)}\,dy,
\]
with $\int_{\mathbb{R}^d}e^{-U(y)}\,dy< \infty$, where $U$ is typically continuously differentiable. Within such a setting, consider a filtered probability space $(\Omega, \mathscr{F}, (\mathscr{F}_{t})_{t \geq 0}, \mathbb{P})$, then the Langevin SDE associated with $\pi$ is defined by
\begin{equation}\label{eqn:sde}
dx_t = -\nabla U(x_t)dt +\sqrt{2}dw_t,
\end{equation}
where $(w_t)_{t \geq 0}$ is a $d$-dimensional Brownian motion. It is a classical result that under mild conditions, the SDE \eqref{eqn:sde} admits $\pi$ as its unique invariant measure. The corresponding numerical scheme of the Langevin SDE obtained by using the Euler-Maruyama (Milstein) method yields the unadjusted Langevin algorithm (ULA), known also as the Langevin Monte Carlo (LMC), which has been well studied in the literature. For a globally Lipschitz $\nabla U$, the non-asymptotic bounds in total variation and Wasserstein distance between the $n$-th iteration of the ULA algorithm and $\pi$ have been provided in  \cite{dal16}, \cite{DM16} and \cite{DM17}. As for the case of superlinear $\nabla U$, the difficulty arises from the fact that the algorithms constructed based on explicit numerical schemes, for example ULA, is unstable (see \cite{msh}), and its Metropolis adjusted version, MALA, loses some of its appealing properties as discussed in \cite{RT96} and demonstrated numerically in \cite{tula}.  However, recent research has developed new types of explicit numerical schemes for SDEs with superlinear coefficients,  and it has been shown in \cite{Kruse et al.}, \cite{hutzenthaler2012}, \cite{milsteinscheme}, \cite{eulerscheme}, \cite{SabanisAoAP}, \cite{WangGan}, that the tamed Euler (Milstein) scheme converges to the true solution of the SDE \eqref{eqn:sde} in $\mathscr{L}^p$ on any given finite time horizon with optimal rate. This progress led to the creation of the tamed unadjusted Langevin algorithm (TULA) in \cite{tula}, where the aforementioned convergence results are extended to an infinite time horizon and, moreover, one obtains rate of convergence results in total variation and in Wasserstein distance.

The new higher order LMC algorithm (HOLA) considered in this article has the following representation, for any $n \in \mathbb{N}$,
\begin{equation}\label{eqnscheme2}
\overline{X}_{n+1} = \overline{X}_n+\mu_{\gamma}(\overline{X}_n)\gamma+\sqrt{2\gamma}\sigma_{\gamma}(\overline{X}_n)Z_{n+1},
\end{equation}
where $\gamma \in (0,1)$ is the step size, $(Z_n)_{n \in \mathbb{N}}$ are i.i.d. standard $d$-dimensional Gaussian random variables, for all $x \in \mathbb{R}^d$, 
\[
\mu_{\gamma}(x) = -\nabla U_{\gamma}( x) +(\gamma/2) (\left(\nabla^2 U\nabla U\right)_{\gamma}( x)-\vec{\Delta}(\nabla U)_{\gamma}(x) ),
\] 
and 
\[
\sigma_{\gamma}(x) =\sqrt{\mathbf{I}_d - \gamma \nabla^2 U_{\gamma}(x)+(\gamma^2/3)(\nabla^2 U_{\gamma}(x))^2}\]
with $\mathbf{I}_d$ being the $d \times d$ identity matrix. The dependences of the coefficients on $\gamma$ are given by, for $x \in \mathbb{R}^d$
\begin{align} \label{deftamedcoeff}
\begin{split}
\nabla U_{\gamma}(x) 	&= \frac{\nabla U(x)}{(1+\gamma^{3/2}|\nabla U(x)|^{3/2})^{2/3}}, \\
\nabla^2 U_{\gamma}(x)	&= \frac{\nabla^2 U(x)}{1+\gamma|\nabla^2 U(x)|}, \\
\left(\nabla^2U\nabla U \right)_{\gamma}(x) 	&= \frac{\nabla^2U(x)\nabla U(x)}{1+\gamma|x||\nabla^2U(x)||\nabla U(x)|},\\
\vec{\Delta}(\nabla U)_{\gamma}(x)			&= \frac{\vec{\Delta}(\nabla U)(x)}{1+\gamma^{1/2}|x||\vec{\Delta}(\nabla U)(x)|}.
\end{split}
\end{align}
The tamed coefficients in \eqref{deftamedcoeff} are chosen such that the exponential moments and the desired rate of convergence of the scheme can be obtained, see Section \ref{relatedwork} for further discussions. One notes that $\sigma_{\gamma}^2$ is a positive definite matrix which has a unique square-root. In practice, $\sigma_{\gamma}$ can be computed by generating two independent standard Gaussian noise $\xi$ and $\tilde{\xi}$, then one considers $\left(\mathbf{I}_d- (1/2) \gamma \nabla^2 U_{\gamma}(X_n)\right)\xi_{n+1}+(\sqrt{3}/6)\gamma \nabla^2 U_{\gamma}(x)\tilde{\xi}_{n+1}$, which has the same distribution as $\sigma_{\gamma}(X_n)Z_{n+1}$ (see \cite{dal17user} and Chapter 10.4 in \cite{kloeden2011numerical}). The HOLA algorithm \eqref{eqnscheme2} is constructed based on the order 1.5 scheme \eqref{eqnscheme} of the SDE \eqref{eqn:sde}, which is obtained using the It\^o-Taylor (known also as Wagner-Platen) expansion (see Chapter 10 in \cite{kloeden2011numerical}) and can be written explicitly as:
\begin{align} \label{eqnscheme}
\begin{split}
X_{n+1}& = X_n -\nabla U_{\gamma}( X_n)\gamma +\frac{\gamma^2}{2}\left(\left(\nabla^2 U\nabla U\right)_{\gamma}( X_n)-\vec{\Delta}(\nabla U)_{\gamma}(X_n)\right)\\
		&\hspace{9em} + \sqrt{2\gamma}\bar{Z}_{n+1}-\sqrt{2}\nabla^2 U_{\gamma}(X_n)\tilde{Z}_{n+1}
\end{split}
\end{align}
where $(\bar{Z}_n)_{n \in \mathbb{N}}$ are i.i.d. standard $d$-dimensional Gaussian random variables,  and $(\tilde{Z}_n)_{n \in \mathbb{N}}$ are i.i.d. $d$-dimensional Gaussian random variables with mean $\mathbf{0}$ and covariance $\frac{1}{3}\gamma^3\mathbf{I}_d$ defined by $\tilde{Z}_{n+1} = \int_{t_n}^{t_{n+1}}\int_{t_n}^s\,dw_r\,ds$. Moreover, $\bar{Z}_{n+1}$ and $\tilde{Z}_{n+1}$ are not independent, and for any $n \in \mathbb{N}$, $k, l = 1, \dots, d$,
\begin{align*}
\mathbb{E}\left(\sqrt{\gamma}\bar{Z}_{n+1}^{(k)}  \tilde{Z}_{n+1}^{(l)} \right)=
\begin{cases}
\frac{1}{2}\gamma^2, 	& \quad \mathrm{for} \, k = l,\\
0,							& \quad \mathrm{otherwise.}
\end{cases}
\end{align*} 
One observes that the scheme $\eqref{eqnscheme}$ is Markovian, and $Law(X_n)$ is the same as $Law(\overline{X}_n)$, for any $n \in \mathbb{N}$.

For the HOLA algorithm \eqref{eqnscheme2}, by extending the techniques used in \cite{tula} and  \cite{SabanisYing}, it can be shown that the scheme \eqref{eqnscheme2} has a unique invariant measure $\pi_{\gamma}$, and one can obtain convergence results between $\pi_{\gamma}$ and the target distribution $\pi$ in some proper distance. More precisely, assume the potential $U$ is three times differentiable, and its third derivative is locally H\"{o}lder continuous with exponent $\beta \in (0,1]$. Then, under certain conditions (specified in Section \ref{mainresults}), Theorem \ref{thmwasserstein} and \ref{thmtv} state that the rate of convergence between the $n$-th iteration of the new algorithm and the target measure $\pi$ is $1+\beta/2$ in Wasserstein distance, whereas the rate is 1 in total variation. Here, one notes that these results are obtained in the context of having superlinear $\nabla U$. To the best of the authors' knowledge, these are the first such results which provide a higher rate of convergence in Wasserstein distance compared to the existing literature. As for the total variation distance, \cite{DM16} proves that the rate of convergence is 1 for the case of a strongly convex $U$, whereas our result yields the same convergence rate without assuming convexity.

The paper is organised as follows. Section \ref{mainresults} presents the assumptions and main results in both super-linear and Lipschitz settings. Section \ref{relatedwork} discusses the contribution of our work with comparison to the existing literature. In Section \ref{locallipcase}, the proofs of Theorem \ref{thmwasserstein} and Theorem \ref{thmtv} are provided, while the proofs of Theorem \ref{thmwassersteinlip} and Corollary \ref{thmwassersteingau} can be found in Section \ref{lipcase}. An example is provided in Section \ref{exlip} illustrating the applicability of the proposed algorithm in the Lipschitz case. Auxiliary results are provided in Appendices.

We conclude this section by introducing some notation. The Euclidean norm of a vector \(b \in \mathbb{R}^d\), the spectral norm and the Frobenius norm of a matrix \(\sigma \in \mathbb{R}^{d\times m}\) are denoted by $|b|$, $|\sigma|$  and  $|\sigma|_{\mathsf{F}}$ respectively. $\sigma^{\mathsf{T}}$ is the transpose matrix of $\sigma$. The $i$-th element of $b$ and \((i,j)\)-th element of $\sigma$ are denoted respectively by $b^{(i)}$ and \(\sigma^{(i,j)}\), for every \(i = 1, \dots,d\) and \(j = 1,\dots,d\). In addition, denote by \(\floor{a}\) the integer part of a positive real number $a$, and $\ceil{a} = \floor{a}+1$. The inner product of two vectors \(x,y \in \mathbb{R}^d\) is denoted by $xy$. For all $x \in \mathbb{R}^d$ and $M>0$, denote by $\mathrm{B}(x,M)$ (respectively $\overline{\mathrm{B}}(x,M)$) the open (respectively close) ball centered at $x$ with radius $M$. Let $f:\mathbb{R}^{d} \rightarrow \mathbb{R}$ be a twice continuously differentiable function. Denote by $\nabla f$, $\nabla^2 f$ and $\Delta f$ the gradient of $f$, the Hessian of $f$ and the Laplacian of $f$ respectively. Denote by $\vec{\Delta}g: \mathbb{R}^d \to \mathbb{R}^d$ the vector Laplacian of $g$, i.e., for all $x \in \mathbb{R}^d$,  $\vec{\Delta}g(x)$ is a vector in $\mathbb{R}^d$ whose $i$-th entry is $\sum_{u=1}^d \frac{\partial^2 g^{(i)}}{\partial x^{(u)}\partial x^{(u)}}(x)$. For $m, m' \in \mathbb{N}^{\ast}$, define
\begin{align*}
&C_{\mathrm{poly}}(\mathbb{R}^m, \mathbb{R}^{m'}) \\
&\hspace{3em}= \left\{P \in C(\mathbb{R}^m, \mathbb{R}^{m'}) | \exists C_q, q \geq 0, \forall x \in \mathbb{R}^d, \,  |P(x)|\leq C_q(1+|x|^q)\right\}.
\end{align*}
For any $t \geq 0$, denote by $\mathcal{C}([0,t], \mathbb{R}^d)$ the space of continuous $\mathbb{R}^d$-valued paths defined on the time interval $[0,t]$. 

Let $\mu$ be a probability measure on $(\mathbb{R}^d, \mathcal{B}(\mathbb{R}^d))$ and $f$ be a $\mu$-integrable function, define $\mu(f) = \int_{\mathbb{R}^d}f(x)\,d\mu(x)$. Given a Markov kernel $R$ on $\mathbb{R}^d$ and a function $f$ integrable under $R(x, \cdot)$, denote by $Rf(x) = \int_{\mathbb{R}^d}f(y)R(x,dy)$. Let $V: \mathbb{R}^d \rightarrow [1,\infty)$ be a measurable function. The $V$-total variation distance between $\mu$ and $\nu$ is defined as $\|\mu-\nu\|_V = \sup_{|f|\leq V}|\mu(f)-\nu(f)|$. If $V=1$, then $\|\cdot\|_V$ is the total variation denoted by $\|\cdot\|_{TV}$. Let  $\mu$ and $\nu$ be two probability measures on a state space $\Omega$ with a given $\sigma$-algebra. If $\mu \ll \nu$, we denote by $d\mu/d\nu$ the Radon-Nikodym derivative of $\mu$ w.r.t. $\nu$. Then, the Kullback-Leibler divergence of $\mu$ w.r.t. $\nu$ is given by
\[
\mathrm{KL}(\mu|\nu) = \int_{\Omega}\frac{d\mu}{d\nu}\log \left(\frac{d\mu}{d\nu}\right)\, d\nu.
\]

We say that $\zeta$ is a transference plan of $\mu$ and $\nu$ if it is a probability measure on $(\mathbb{R}^d \times \mathbb{R}^d, \mathcal{B}(\mathbb{R}^d)\times \mathcal{B}(\mathbb{R}^d))$ such that for any Borel set $A$ of $\mathbb{R}^d$, $\zeta(A \times \mathbb{R}^d)$= $\mu (A)$ and $\zeta(\mathbb{R}^d \times A) = \nu (A)$. We denote by $\Pi(\mu, \nu)$ the set of transference plans of $\mu$ and $\nu$. Furthermore, we say that a couple of $\mathbb{R}^d$-valued random variables $(X,Y)$ is a coupling of $\mu$ and $\nu$ if there exists $\zeta \in \Pi(\mu, \nu)$ such that $(X, Y)$ is distributed according to $\zeta$. For two probability measures $\mu$ and $\nu$, the Wasserstein distance of order $p \geq 1$ is defined as
\[
W_p(\mu, \nu) = \left(\inf_{\zeta \in \Pi (\mu, \nu)} \int_{\mathbb{R}^d \times \mathbb{R}^d}|x-y|^p\, d\zeta(x, y) \right)^{1/p}.
\]

\section{Main results}\label{mainresults}
Assume $U$ is three times continuously differentiable. The following conditions are stated:
\begin{enumerate}[label=\textbf{H\arabic*}]
\item \label{h1} $\liminf_{|x|\rightarrow +\infty} |\nabla U(x)|=+\infty$, and $\liminf_{|x|\rightarrow +\infty}\frac{x\nabla U(x)}{|x||\nabla U(x)|}>0$.
\item \label{h3} There exists $L>0$, $\rho \geq 2$, and $\beta \in (0,1]$, such that for any $i = 1, \dots, d$ and for all $x,y \in \mathbb{R}^d$,
\[
|\nabla^2(\nabla U)^{(i)}(x)-\nabla^2(\nabla U)^{(i)}(y)|\leq L(1+|x|+|y|)^{\rho -2}|x-y|^{\beta},
\]
where $(\nabla U)^{(i)}$ denotes the $i$-th element of $\nabla U$.
\item \label{h4} $U$ is strongly convex, i.e. there exists $m>0$ such that for all $x, y \in \mathbb{R}^d$,
\[
(x-y)\left(\nabla U(x)-\nabla U(y)\right) \geq m |x-y|^2.
\]
\end{enumerate}
\begin{remark}
Unless otherwise specified, the constants $C, K>0$ may take different values at different places, but these are always independent of the step size $\gamma \in (0,1)$.
\end{remark}
\begin{remark} \label{poly} Assume \ref{h3} holds, then for any $i = 1, \dots, d$ and for all $x \in \mathbb{R}^d$,
\[
|\nabla^2(\nabla U)^{(i)}(x)|\leq K(1+|x|)^{\rho -2+\beta},
\]
where $K = \max\{L, |\nabla^2(\nabla U)^{(i)}(0)|\}$, moreover, for all $x, y \in \mathbb{R}^d$,
\[
|\nabla(\nabla U)^{(i)}(x)-\nabla(\nabla U)^{(i)}(y)|\leq K(1+|x|+|y|)^{\rho -2+\beta}|x-y|,
\]
which implies,
\[
|\nabla(\nabla U)^{(i)}(x)|\leq K_1(1+|x|)^{\rho-1+\beta },
\]
where $K_1 = \max \{K, |\nabla (\nabla U)^{(i)}(0)|\}$. Furthermore,
for all $x, y \in \mathbb{R}^d$,
\[
|\nabla U^{(i)}(x)-\nabla U^{(i)}(y)|\leq K_1(1+|x|+|y|)^{\rho-1+\beta }|x-y|,
\]
and one obtains
\[
|\nabla U^{(i)}(x)|\leq K_2(1+|x|)^{\rho+\beta },
\]
where $K_2 =  \max \{K_1, |\nabla U^{(i)}(0)|\}$. One notes that the above inequality implies
\[
|\vec{\Delta}(\nabla U)(x) -\vec{\Delta}(\nabla U)(y)| \leq d^{3/2}L(1+|x|+|y|)^{\rho-2}|x-y|^{\beta},
\]
\[
|\vec{\Delta}(\nabla U)(x)| \leq dK(1+2|x|)^{\rho -1}.
\]
\end{remark}
\begin{proof} See Appendix \ref{remark2}
\end{proof}

\begin{remark}\label{tametermbd}
By the definition of the tamed coefficients \eqref{deftamedcoeff} and \ref{h3}, one obtains for all $x \in \mathbb{R}^d$,
\[
|\nabla U_{\gamma}(x)| \leq \sqrt[3]{2}\gamma^{-1}, \quad |\nabla^2 U_{\gamma}(x)| \leq \gamma^{-1}, 
\]
\[
|\left(\nabla^2U \nabla U\right)_{\gamma}(x)| \leq (1+2^{2\rho +1}dK_1K_2)\gamma^{-1},
\]
\[
 |\vec{\Delta}(\nabla U)_{\gamma}(x)|\leq  (1+3^{\rho-1}dK)\gamma^{-1/2}.
\]
In particular, when $|x| \geq 1$, $x \in \mathbb{R}^d$, one obtains
 \[
|\nabla U_{\gamma}(x)| \leq \sqrt[3]{2}\gamma^{-1}, \quad |\nabla^2 U_{\gamma}(x)| \leq \gamma^{-1}, 
\]
\[
|\left(\nabla^2U\nabla U\right)_{\gamma}(x)| \leq\gamma^{-1}, \quad 
 |\vec{\Delta}(\nabla U)_{\gamma}(x)|\leq \gamma^{-1/2}.
\]
\end{remark}
The Markov kernel $R_{\gamma}$ associated with \eqref{eqnscheme2} is given by
\[
R_{\gamma}(x, A) = (2\pi)^{-d/2}\int_{\mathbb{R}^d} \mathbf{1}_A\left(x+ \mu_{\gamma}(x)\gamma+\sqrt{2\gamma}\sigma_{\gamma}(x)z\right)e^{-|z|^2/2} \,dz,
\]
for all $x \in \mathbb{R}^d$ and $A \in \mathcal{B}(\mathbb{R}^d)$. Denote by $(P_t)_{t \geq 0}$ the semigroup associated with \eqref{eqn:sde}. For all $x \in \mathbb{R}^d$ and $A \in \mathcal{B}(\mathbb{R}^d)$, we have $P_t(x, A)= \mathbb{E}[\mathbf{1}_{\{x_t \in A\}}|x_0 =x]$. In addition, for all $x \in \mathbb{R}^d$ and $h \in C^2(\mathbb{R}^d)$, the infinitesimal generator $\mathscr{A}$ associated with \eqref{eqn:sde} is defined by
\[
\mathscr{A}h(x) = -\nabla U(x)\nabla h(x)+\Delta h(x).
\]
For any $a>0$, define the Lyapunov function $V_a:\mathbb{R}^d \rightarrow [1,+\infty)$ for all $x \in \mathbb{R}^d$ by 
\[
V_a(x) = \exp\left(a(1+|x|^2)^{1/2}\right).
\]
Then, for the local Lipschitz drift, one obtains the following convergence results.
\begin{theorem}\label{thmwasserstein}
Assume \ref{h1}, \ref{h3} and \ref{h4} are satisfied. Then, there exist constants $C>0$ and $\lambda \in (0,1)$ such that for all $x \in \mathbb{R}^d$, $\gamma \in (0,1)$ and $n \in \mathbb{N}$,
\begin{equation}\label{wasserstein1}
W_2^2(\delta_x R_{\gamma}^n, \pi) \leq C (\lambda^{n\gamma}V_c(x)+\gamma^{2+\beta}),
\end{equation}
where $c$ is given in \eqref{constantc} and for all $\gamma \in (0,1)$,
\[
W_2^2(\pi_{\gamma}, \pi) \leq C\gamma^{2+\beta}.
\]
\end{theorem}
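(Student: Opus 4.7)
The plan is to compare the HOLA iterates $\bar{X}_n$ with law $\delta_x R_\gamma^n$ to the exact Langevin diffusion $(x_t)_{t\ge 0}$ started at $x$, and then exploit the exponential $W_2$-contraction of the diffusion given by \ref{h4}. Applying the triangle inequality,
\[
W_2(\delta_x R_\gamma^n, \pi) \leq W_2(\delta_x R_\gamma^n, \delta_x P_{n\gamma}) + W_2(\delta_x P_{n\gamma}, \pi).
\]
For the second term, a synchronous coupling of two solutions of \eqref{eqn:sde} driven by the same Brownian motion combined with \ref{h4} yields the pathwise bound $|x_t - y_t|^2 \leq e^{-2mt}|x_0 - y_0|^2$. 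Integrating against $\pi$, whose exponential moments $\int V_c\,d\pi$ are finite by a standard drift computation from \ref{h4}, gives $W_2^2(\delta_x P_{n\gamma}, \pi) \leq C e^{-2mn\gamma} V_c(x)$ with $\lambda = e^{-2m\gamma}$.

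For the first term I would couple synchronously the HOLA scheme and the diffusion and estimate the mean-square difference step by step. The Itô--Taylor (Wagner--Platen) expansion of $x_{(k+1)\gamma}$ about $x_{k\gamma}$ reproduces exactly the terms appearing in \eqref{eqnscheme}/\eqref{eqnscheme2} but with the true coefficients $\nabla U$, $\nabla^2 U$, $\nabla^2 U\nabla U$, $\vec{\Delta}(\nabla U)$ in place of their tamed counterparts of \eqref{deftamedcoeff}. Subtracting, the error decomposes into (i) the Itô--Taylor remainder, which under the Hölder condition \ref{h3} on $\nabla^3 U$ is of size $\gamma^{(3+\beta)/2}$ in $L^2$ times a polynomial factor in $|x_{k\gamma}|$, and (ii) the taming discrepancies, controlled by Remark~\ref{tametermbd} and the bounds of Remark~\ref{poly}. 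Using \ref{h4} on the drift difference $(\bar{X}_k-x_{k\gamma})\bigl(\nabla U(\bar{X}_k)-\nabla U(x_{k\gamma})\bigr)$ to extract a contractive factor, I would arrive at a one-step recursion
\[
\mathbb{E}|\bar{X}_{k+1} - x_{(k+1)\gamma}|^2 \leq (1 - c\gamma)\,\mathbb{E}|\bar{X}_k - x_{k\gamma}|^2 + C\gamma^{3+\beta}\,\mathbb{E}\Phi(\bar{X}_k, x_{k\gamma}),
\]
with $\Phi$ of polynomial growth. Iterating, the geometric series collapses the local error of order $\gamma^{3+\beta}$ into a global error of order $\gamma^{2+\beta}$, provided $\sup_k \mathbb{E}\Phi(\bar{X}_k,x_{k\gamma})$ stays uniformly bounded by a multiple of $V_c(x)$; this is supplied by separately proved Lyapunov/moment estimates for both the HOLA iterates (leveraging the precise form of the tamed coefficients) and the diffusion.

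The invariant-measure bound follows by glueing: since $\pi_\gamma = \pi_\gamma R_\gamma^n$ for every $n$, integrating per-$x$ optimal couplings of $\delta_x R_\gamma^n$ and $\pi$ against $\pi_\gamma$ produces a coupling of $\pi_\gamma$ and $\pi$ with transport cost at most $\int W_2^2(\delta_x R_\gamma^n, \pi)\,d\pi_\gamma(x)$. Combined with \eqref{wasserstein1} and the fact that $\int V_c\, d\pi_\gamma < \infty$ (from the HOLA Lyapunov estimate applied to its stationary distribution), sending $n\to\infty$ eliminates the $\lambda^{n\gamma}$ contribution and yields $W_2^2(\pi_\gamma,\pi)\leq C\gamma^{2+\beta}$.

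The principal obstacle is the one-step error analysis under the merely Hölder hypothesis \ref{h3}: obtaining the sharp exponent $\gamma^{(3+\beta)/2}$ in $L^2$ (rather than $\gamma^{3/2}$) requires careful Itô--Taylor remainder estimates in which the $\beta$ exponent is tracked through each iterated stochastic integral, and all polynomial prefactors generated by the superlinear growth of $\nabla U$, $\nabla^2U\nabla U$ and $\vec{\Delta}(\nabla U)$ must be absorbed into an exponential Lyapunov function $V_c$. Establishing the uniform-in-$n$ bound $\sup_n\mathbb{E} V_c(\bar{X}_n) \leq C V_c(x)$ for the tamed scheme, and dovetailing it with the one-step contraction coming from \ref{h4}, are the delicate auxiliary results on which the proof ultimately rests.
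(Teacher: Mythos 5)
Your high-level plan---synchronous coupling, a one-step recursion, contraction from \ref{h4}, and a geometric sum---matches the paper's approach in broad strokes (the paper skips your triangle inequality by starting the diffusion at $\pi$, making $x_t$ stationary so that $|x_t-\bar{x}_t|^2$ directly controls $W_2^2$, but this is inessential). The genuine gap is in the claimed size of the one-step error. The dominant piece of the It\^o--Taylor remainder is the stochastic term $\sqrt{2}\int_{n\gamma}^{t}(\nabla^2 U(\bar{x}_r)-\nabla^2 U(\bar{x}_{n\gamma}))\,dw_r$; Remark~\ref{poly} gives only \emph{local Lipschitz} continuity of $\nabla^2 U$, so this term is of $L^2$-size $\gamma$ (that is, $G_3\lesssim\gamma^2 V_c$ in the paper's notation), and it gains nothing from the $\beta$-H\"older hypothesis on the third derivative. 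Feeding this into your one-step recursion by estimating the cross-term $\mathbb{E}[|e_t|\,G_3^{1/2}]$ with Young's inequality accumulates only $\gamma^3$ per step and yields $W_2^2\lesssim\gamma^2$---the first-order rate, not $\gamma^{2+\beta}$. Your assertion that the It\^o--Taylor remainder is of size $\gamma^{(3+\beta)/2}$ in $L^2$ is not available as a direct estimate, and the recursion you write down therefore does not close at the claimed order.

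The missing idea---the reason the paper introduces $M(\bar{x}_r,\bar{x}_{n\gamma})$ and proves Lemmas~\ref{mvt}, \ref{rateM} and~\ref{problematicterm}---is that this dominant stochastic term is \emph{conditionally centered}, and this is what must be exploited instead of its $L^2$-magnitude. One writes $\nabla^2 U(\bar{x}_r)-\nabla^2 U(\bar{x}_{n\gamma})=M(\bar{x}_r,\bar{x}_{n\gamma})+O\bigl((1+|\bar{x}_r|+|\bar{x}_{n\gamma}|)^{\rho-2}|\bar{x}_r-\bar{x}_{n\gamma}|^{1+\beta}\bigr)$, so that the residual genuinely gains the H\"older factor (Lemma~\ref{mvt}) and \emph{can} be absorbed by Young's inequality into $\varepsilon\mathbb{E}|e_t|^2+C\gamma^{2+\beta}\mathbb{E}V_c$. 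The leading piece $\int M\,dw_r$ is then treated by decomposing $e_t=e_{n\gamma}+(e_t-e_{n\gamma})$: the pairing with $e_{n\gamma}$ vanishes because $e_{n\gamma}$ is $\mathscr{F}_{n\gamma}$-measurable and the stochastic integral has zero conditional mean, while the pairing with $e_t-e_{n\gamma}$ is bounded by $C\gamma^3(V_c(x_{n\gamma})+V_c(\bar{x}_{n\gamma}))$ through a second round of It\^o expansion in which a further product of stochastic integrals is shown to have zero conditional expectation (Lemma~\ref{problematicterm}). Without this centering-and-cancellation mechanism you cannot push past the first-order rate, so this, rather than ``tracking $\beta$ through each iterated integral,'' is where the proof of \eqref{wasserstein1} is actually won.
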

\begin{theorem}\label{thmtv}
Assume \ref{h1} and \ref{h3} are satisfied. There exist $C>0$ and $\lambda \in (0,1)$ such that for all $x \in \mathbb{R}^d$, $\gamma \in (0,1)$ and $n \in \mathbb{N}$,
\begin{equation}\label{tvthm1}
\|\delta_x R_{\gamma}^n-\pi\|_{V_c^{1/2}} \leq C (\lambda^{n\gamma}V_c(x)+\gamma),
\end{equation}
where $c$ is given in \eqref{constantc} and for all $\gamma \in (0,1)$,
\begin{equation}\label{tvthm2}
\|\pi_{\gamma}-\pi\|_{V_c^{1/2}} \leq C\gamma .
\end{equation}
\end{theorem}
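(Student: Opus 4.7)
The plan is to combine a Foster--Lyapunov drift condition with a minorization condition to establish uniform (in $\gamma$) geometric ergodicity of $R_\gamma$ in $V_c^{1/2}$-norm, then bound the one-step discretization error in the same norm and propagate it by a telescoping argument.

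First I would derive a discrete-time drift inequality of the form $R_\gamma V_c(x) \leq e^{-\kappa\gamma} V_c(x) + b\gamma \mathbf{1}_{\{|x|\leq M\}}$ for some $\kappa, b, M > 0$ independent of $\gamma$. This uses \ref{h1} (which supplies a uniform outward push at infinity), the explicit control on the tamed coefficients from Remark \ref{tametermbd} to absorb the second-order correction $\mu_\gamma + \nabla U_\gamma$, and a Taylor expansion of $V_c$ together with Gaussian moment formulas. Combined with a minorization $R_\gamma(x, \cdot) \geq \alpha_0 \nu(\cdot)$ on compact sublevel sets of $V_c$ (uniform in $\gamma \in (0, \gamma_0)$, coming from the Gaussian density of $\sqrt{2\gamma}\sigma_\gamma(x) Z_{n+1}$ after checking that $\sigma_\gamma(x)^2 \succeq (1/2) \mathbf{I}_d$ for small enough $\gamma$), the standard drift--minorization machinery yields existence and uniqueness of $\pi_\gamma$, uniform control $\sup_\gamma \pi_\gamma(V_c) < \infty$, and geometric convergence $\|\delta_x R_\gamma^n - \pi_\gamma\|_{V_c^{1/2}} \leq C V_c(x) \lambda^{n\gamma}$. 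An analogous (and more classical) argument gives $\|\delta_x P_t - \pi\|_{V_c^{1/2}} \leq C V_c(x) \tilde\lambda^t$ for the true semigroup.

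The crux is the one-step discretization bound
\begin{equation*}
\|\delta_x R_\gamma - \delta_x P_\gamma\|_{V_c^{1/2}} \leq C \gamma^2 V_c(x).
\end{equation*}
I would establish this via a Girsanov comparison: writing the continuous-time It\^o--Taylor interpolation of one HOLA step as in \eqref{eqnscheme} and changing measure so that its driving Brownian motion aligns with that of \eqref{eqn:sde} produces a Radon--Nikodym density whose log, thanks to the order-$3/2$ construction, has $L^2$-size $O(\gamma^{5/2})$ under \ref{h3}. A $V$-weighted Pinsker-type inequality, combined with the uniform exponential moments supplied by the drift inequality, converts this KL bound into the stated $V_c^{1/2}$-total-variation bound. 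Here $\beta$ plays no role because weak (rather than strong) local error governs the total variation, and the order-$3/2$ scheme has weak order at least two.

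To assemble the global bound I telescope
\begin{equation*}
\delta_x R_\gamma^n - \delta_x P_{n\gamma} = \sum_{k=0}^{n-1} \delta_x R_\gamma^{k}\bigl(R_\gamma - P_\gamma\bigr) P_{(n-k-1)\gamma},
\end{equation*}
apply the one-step bound to each summand, and use the $V_c^{1/2}$-contraction of $P_t$ together with the uniform $V_c$-moment propagation of $R_\gamma^k$ so that each term contributes $C \gamma^2 V_c(x) \tilde\lambda^{(n-k-1)\gamma}$; summing and combining with the ergodicity of $P_t$ yields \eqref{tvthm1}. For \eqref{tvthm2}, the invariance $\pi_\gamma R_\gamma^n = \pi_\gamma$ and integration of \eqref{tvthm1} against $\pi_\gamma$, followed by $n \to \infty$, give $\|\pi_\gamma - \pi\|_{V_c^{1/2}} \leq C\gamma \pi_\gamma(V_c) \leq C\gamma$. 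The main obstacle will be the one-step $V$-norm bound: the Girsanov step must tolerate super-linear $\nabla U$ through the tamed surrogate (without losing exponential moments), and the passage from KL to $V_c^{1/2}$-weighted total variation must be executed carefully to avoid absorbing spurious powers of $\gamma$ through Cauchy--Schwarz against the exponential Lyapunov.
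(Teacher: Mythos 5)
Your skeleton (drift condition for $R_\gamma$, geometric ergodicity of $P_t$, Girsanov-based KL control on the interpolated path measures, and a $V$-weighted Pinsker inequality) is indeed the one used in the paper, so the high-level architecture is right. But there is a real quantitative gap in the middle step. Starting from the Girsanov identity
\begin{equation*}
\log\frac{d\mu_1^y}{d\bar\mu_1^y} = -\frac{1}{4}\int_0^\gamma\bigl|\nabla U(\bar{x}_s)-\nabla\tilde{U}_\gamma(s,\bar{x}_0)\bigr|^2\,ds + \frac{1}{\sqrt{2}}\int_0^\gamma\bigl(\nabla\tilde{U}_\gamma(s,\bar{x}_0)-\nabla U(\bar{x}_s)\bigr)\,dw_s,
\end{equation*}
the drift deviation has \emph{strong} $L^2$ size $O(\gamma)$ pointwise in $s$ (this is exactly the content of Lemma~\ref{rate4terms}), so the martingale term has $L^2$ size $O(\gamma^{3/2})$, not $O(\gamma^{5/2})$, and the one-step KL is $O(\gamma^3)$, not $O(\gamma^5)$. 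The weighted Pinsker bound (Lemma~24 of \cite{DM17}) then gives a one-step error $\|\delta_x R_\gamma-\delta_x P_\gamma\|_{V_c^{1/2}}=O(\gamma^{3/2})$, not the $O(\gamma^2)$ you claim. Your appeal to ``weak order at least two'' does not help here: KL and total variation genuinely see the strong $L^2$ drift discrepancy, not the integrated weak error.

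With the correct one-step rate $\gamma^{3/2}$, your naive telescoping
\begin{equation*}
\delta_x R_\gamma^n - \delta_x P_{n\gamma} = \sum_{k=0}^{n-1} \delta_x R_\gamma^k\bigl(R_\gamma-P_\gamma\bigr)P_{(n-k-1)\gamma}
\end{equation*}
loses a factor $\gamma^{1/2}$ at each application of Pinsker, and summing against the geometric factor $\tilde\lambda^{(n-k-1)\gamma}$ (which sums to $O(1/\gamma)$) yields only $O(\gamma^{1/2})$. This is exactly the rate that \cite{tula} obtained for the first-order tamed scheme, and it is not the order $1$ claimed by Theorem~\ref{thmtv}. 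The device you are missing is the block decomposition in the paper's proof: one groups the $n$ steps into blocks of $k_\gamma=\ceil{\gamma^{-1}}$ iterations (so each block spans unit time), applies the Girsanov KL bound of Lemma~\ref{lemmatv} across a whole block so the KL accumulates to $O(k_\gamma\gamma^3)=O(\gamma^2)$, and then applies the weighted Pinsker inequality once per block to get $O(\gamma)$. The geometric decay $\rho_{c/2}^{(n-ik_\gamma)\gamma}$ is summed over blocks, not over individual steps, so the number of effectively contributing blocks is $O(1)$, and the total is $O(\gamma)$. Without this grouping, you cannot upgrade the half-power rate to the full rate $1$. (A minor point: the uniform-in-$\gamma$ minorization you invoke for $R_\gamma$ is not needed for Theorem~\ref{thmtv}; the paper only uses geometric ergodicity of $P_t$ from Proposition~\ref{sdemomentbound} together with the moment bound of Proposition~\ref{momentbound}.)
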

In the case of super-linear coefficients, tracking the explicit constants involves tedious calculations, and it is less informative compared to the case of Lipschitz coefficients, in the sense that the dependence on the dimension of the constant $C$ (appearing in Theorem \ref{thmwasserstein} and Theorem \ref{thmtv}) is $O(e^d)$. One notes that this is due to the fact that exponential moments of the scheme \ref{eqnscheme2} is obtained when a log-Sobolev inequality is used. To illustrate the explicit dependence on the dimension, and to provide explicit constants for the moment bounds and the convergence in Wasserstein distance, the Lipschitz case is considered. Four times continuous differentiablility on $U$ is required and the following conditions are assumed:
\begin{enumerate}[label=\textbf{H\arabic*}]
\setcounter{enumi}{3}
\item \label{h5} There exists $L_1>0$, such that for all $x,y \in \mathbb{R}^d$, 
\[|\nabla U(x)-\nabla U(y)|\leq L_1|x-y|.\]
\item \label{h6} There exists $L_2>0$, such that for all $x,y \in \mathbb{R}^d$, 
\[|\nabla^2U(x)-\nabla^2U(y)|\leq L_2|x-y|.\]
\item \label{h7} There exists $L>0$, such that for all $x,y \in \mathbb{R}^d$, 
\[|\nabla^2(\nabla U)^{(i)}(x)-\nabla^2(\nabla U)^{(i)}(y)|\leq L|x-y|.\]
\end{enumerate}

One notices that, in the Lipschitz case, there is no need to use the tamed coefficients, and one can consult Theorem 10.6.3 in \cite{kloeden2011numerical} for the classical strong convergence result for the order 1.5 scheme in a finite time. The counterpart of algorithm \eqref{eqnscheme2} in the Lipschitz case becomes: for any $n \in \mathbb{N}$
\begin{equation}\label{eqnschemelip}
\tilde{X}_{n+1} = \tilde{X}_n+\mu (\tilde{X}_n)\gamma+\sqrt{2\gamma}\sigma(\tilde{X}_n)Z_{n+1},
\end{equation}
where $\gamma \in (0,1)$ is the step size, $(Z_n)_{n \in \mathbb{N}}$ are i.i.d. standard $d$-dimensional Gaussian random variables, for all $x \in \mathbb{R}^d$, $\mu (x) = -\nabla U ( x) +(\gamma/2) (\nabla^2 U(x)\nabla U ( x)-\vec{\Delta}(\nabla U) (x) )$, and $\sigma (x) =\sqrt{\mathbf{I}_d - \gamma \nabla^2 U(x)+(\gamma^2/3)(\nabla^2 U(x))^2}$. The Markov kernel $\tilde{R}_{\gamma}$ associated with \eqref{eqnschemelip} is given by
\[
\tilde{R}_{\gamma}(x, A) = (2\pi)^{-d/2}\int_{\mathbb{R}^d} \mathbf{1}_A\left(x+ \mu(x)\gamma+\sqrt{2\gamma}\sigma (x)z\right)e^{-|z|^2/2} \,dz,
\]
for all $x \in \mathbb{R}^d$ and $A \in \mathcal{B}(\mathbb{R}^d)$

\begin{theorem}\label{thmwassersteinlip}
Assume \ref{h4} - \ref{h7} are satisfied. Let $\gamma \in \left(0,\frac{1}{\tilde{m}} \wedge  \frac{8\tilde{m}^2}{m( 2L_1^2+7\tilde{m}L_1)}\right)$. Then, for all $x \in \mathbb{R}^d$ and $n \in \mathbb{N}$,
\[
W_2^2(\delta_x \tilde{R}_{\gamma}^n, \pi) \leq e^{-mn\gamma}\left(2|x-x^{\ast}|^2+\frac{2d}{m}\right)+\bar{C}\gamma^3,
\]
where $\tilde{m}$ is given in \eqref{eq:definition-tilde-m}, $\bar{C}=O(d^4)$ and its the explicit expression is given in the proof.
\end{theorem}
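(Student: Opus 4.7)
The plan is a pathwise $L^{2}$-comparison between the algorithm \eqref{eqnschemelip} started at $x$ and the Langevin SDE \eqref{eqn:sde} started at $x$ and driven by the same Brownian motion $(w_t)$. In the Lipschitz setting no taming is needed and the scheme agrees in law with the strong order-$1.5$ It\^o--Taylor scheme \eqref{eqnscheme}; this is what makes a squared strong error of order $\gamma^{3}$ available. By the triangle inequality,
\[
W_2^{2}(\delta_x\tilde R_\gamma^{n},\pi)\le 2\,\mathbb{E}|\tilde X_n-x_{n\gamma}|^{2}+2\,W_2^{2}(\delta_x P_{n\gamma},\pi),
\]
I would split the analysis into the strong error of the scheme and the convergence of the SDE. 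For the second term I would use \ref{h4} to get the $W_2$-contraction $W_2(\delta_x P_t,\delta_y P_t)\le e^{-mt}|x-y|$ (synchronous coupling) together with the standard variance bound $\int|y-x^{\ast}|^{2}\,d\pi(y)\le d/m$ for strongly log-concave $\pi$, yielding $W_2^{2}(\delta_x P_{n\gamma},\pi)\le e^{-2mn\gamma}(|x-x^{\ast}|^{2}+d/m)$, which accounts for the first summand of the stated bound.

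For the strong-error term I would proceed in two stages. \emph{Stage one (uniform moments):} using the Lyapunov function $V(y)=|y-x^{\ast}|^{2}$ and a direct computation of $\tilde R_\gamma V$ under \ref{h4}--\ref{h7}, a one-step drift inequality $\tilde R_\gamma V(y)\le (1-\tilde m\gamma)V(y)+C\gamma d$ is obtained for $\gamma<1/\tilde m$ (this is how $\tilde m$ of the theorem is introduced); iterating gives $\sup_n\mathbb{E}|\tilde X_n-x^{\ast}|^{2k}=O(d^{k})$ for small $k$, which is needed to control the quadratic growth of $\nabla^{2}U\nabla U$. \emph{Stage two (It\^o--Taylor expansion):} applying It\^o's formula successively to $\nabla U(x_s)$ and $\nabla^{2}U(x_s)$ on $[n\gamma,(n+1)\gamma]$ gives
\[
x_{(n+1)\gamma}-x_{n\gamma}=-\nabla U(x_{n\gamma})\gamma+\tfrac{\gamma^{2}}{2}[\nabla^{2}U\nabla U-\vec{\Delta}(\nabla U)](x_{n\gamma})+\sqrt{2}\,\Delta w_n-\sqrt{2}\,\nabla^{2}U(x_{n\gamma})\tilde Z_{n+1}+\mathcal R_n,
\]
where $\Delta w_n=w_{(n+1)\gamma}-w_{n\gamma}$, $\tilde Z_{n+1}=\int_{n\gamma}^{(n+1)\gamma}\int_{n\gamma}^{s}dw_r\,ds$, and $\mathcal R_n$ is a combination of triple iterated integrals of $\nabla^{2}(\nabla U)^{(i)}$ and double integrals of mixed products such as $\nabla^{2}U\cdot\nabla^{2}U$. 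Under \ref{h5}--\ref{h7} together with stage one, $\mathbb{E}|\mathcal R_n|^{2}=O(d^{4})\gamma^{4}$. Using the $(\xi,\tilde\xi)$ representation noted after \eqref{eqnscheme2}, the scheme noise $\sqrt{2\gamma}\sigma(\tilde X_n)Z_{n+1}$ can be realised as $\sqrt{2}\,\Delta w_n-\sqrt{2}\,\nabla^{2}U(\tilde X_n)\tilde Z_{n+1}$ on the same Brownian motion, so subtracting the scheme step from the expansion above gives, for $e_n:=\tilde X_n-x_{n\gamma}$ with $e_0=0$,
\[
e_{n+1}=e_n-\gamma[\nabla U(\tilde X_n)-\nabla U(x_{n\gamma})]+\tfrac{\gamma^{2}}{2}\Lambda_n-\sqrt{2}[\nabla^{2}U(\tilde X_n)-\nabla^{2}U(x_{n\gamma})]\tilde Z_{n+1}-\mathcal R_n,
\]
with $\Lambda_n$ the difference of the correction terms, Lipschitz in $e_n$ by \ref{h6}--\ref{h7} and the stage-one moments.

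Squaring, taking conditional expectation on $\mathcal F_{n\gamma}$, and using \ref{h4} to convert the leading inner product into $-2m\gamma|e_n|^{2}$ (the cross term involving $\tilde Z_{n+1}$ has zero conditional mean since $\mathbb{E}[\tilde Z_{n+1}]=0$), while bounding the remaining $O(\gamma^{2})|e_n|^{2}$ perturbations via \ref{h5}--\ref{h6}, yields a recursion
\[
\mathbb{E}|e_{n+1}|^{2}\le (1-m\gamma)\mathbb{E}|e_n|^{2}+\bar c\,\gamma^{4},\qquad \bar c=O(d^{4}),
\]
provided the step-size is small enough that the $O(\gamma^{2})$ pieces do not destroy the $-m\gamma$ contraction; the exact algebraic balance here is what pins down the explicit restriction $\gamma<8\tilde m^{2}/(m(2L_1^{2}+7\tilde m L_1))$. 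Iterating from $e_0=0$ gives $\mathbb{E}|e_n|^{2}\le \bar c\gamma^{3}/m$, and plugging into the triangle inequality produces the stated bound with $\bar C=O(d^{4})$. The main obstacle will be the It\^o--Taylor remainder: to extract an explicit $O(d^{4})$ constant one has to bound triple iterated stochastic integrals involving $\nabla^{2}(\nabla U)^{(i)}$ together with mixed products such as $\nabla^{2}U\cdot\nabla^{2}U$ and $\nabla^{2}U\cdot\vec{\Delta}(\nabla U)$, each of which contributes an extra factor of $d$ through Frobenius/spectral-norm inequalities and the stage-one moment bounds.
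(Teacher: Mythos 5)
Your plan takes a genuinely different route from the paper. The paper does not split $W_2$ via the triangle inequality; it forms a single synchronous coupling of the continuous SDE started from $\pi$ and the interpolated scheme started from $\delta_x$, defines $e_t = x_t - \tilde x_t$, and runs a continuous-time Gronwall argument on $\mathbb{E}|e_t|^2$, extracting both the exponential decay (through $e^{-m\gamma(n+1)}\mathbb{E}|x_0-\tilde x_0|^2$ with $x_0\sim\pi$) and the $\gamma^3$ bias in one pass. Your split into strong error plus $W_2$-contraction of the SDE is equally legitimate, at the cost of an extra constant factor from the triangle inequality and of having to bound $W_2^2(\delta_x,\pi)$ separately; the discrete one-step recursion you use is the natural analogue of the paper's differential inequality.

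The genuine gap is in the treatment of the cross term $-2\,\mathbb{E}[e_n\,\mathcal R_n]$ in your squared recursion. You invoke the zero conditional mean of $\tilde Z_{n+1}$ to kill the cross term coming from $[\nabla^2U(\tilde X_n)-\nabla^2U(x_{n\gamma})]\tilde Z_{n+1}$, which is correct. But $\mathcal R_n$ itself contains both martingale and Lebesgue (time-integral) pieces, and if you bound $|\mathbb{E}[e_n\mathcal R_n]|$ by Cauchy--Schwarz with $\mathbb{E}|\mathcal R_n|^2 = O(\gamma^4)$ and the induction hypothesis $\mathbb{E}|e_n|^2 = O(\gamma^3)$, you only get $O(\gamma^{7/2})$, which, fed back through the recursion, yields $\mathbb{E}|e_n|^2 = O(\gamma^{5/2})$, not $O(\gamma^3)$. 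To recover the full rate one must use that $e_n$ is $\mathscr F_{n\gamma}$-measurable, so $\mathbb{E}[e_n\mathcal R_n]=\mathbb{E}\bigl[e_n\,\mathbb{E}^{\mathscr F_{n\gamma}}\mathcal R_n\bigr]$, and then argue that $\mathbb{E}^{\mathscr F_{n\gamma}}\mathcal R_n=O(\gamma^3)$ (i.e.\ that for additive noise the strong order-$1.5$ scheme is also weak order~$2$, because the terms $I_{(0,1)}$, $I_{(1,1)}$, $I_{(1,1,1)}$ have vanishing coefficients); Young's inequality then gives a contribution $\varepsilon\gamma\,\mathbb{E}|e_n|^2 + O(\gamma^5)$ and the recursion closes at $O(\gamma^4)$. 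This is precisely the point the paper isolates into Lemma~\ref{lipproblematic}, where the analogous cross term is decomposed by It\^o's formula and the vanishing of certain mixed iterated integrals (the identity $\mathbb{E}^{\mathscr F_{n\gamma}}[\int\!\!\int\cdots dw\,dr\cdot\int\!\!\int\cdots dw\,dw]=0$) is exploited. Your sketch should make this step explicit; otherwise the claim $\mathbb{E}|e_{n+1}|^2\le(1-m\gamma)\mathbb{E}|e_n|^2+\bar c\gamma^4$ with $\bar c=O(d^4)$ does not follow from the stated bounds. The rest of the outline (uniform second and fourth moments via a Lyapunov drift, giving the restriction on $\gamma$ and the dimensional dependence of $\bar C$, and the Frobenius-norm bookkeeping for $\nabla^2(\nabla U)$ and $\nabla^2(\nabla^2 U)$) matches the paper's Propositions~\ref{lip2ndmoment}--\ref{lip4thmoment} and Lemmas~\ref{liprate1}--\ref{lip4terms}.
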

\begin{corollary}\label{thmwassersteingau}
Assume \ref{h4} - \ref{h7} are satisfied. Let $\gamma \in \left(0,\frac{1}{\tilde{m}} \wedge  \frac{8\tilde{m}^2}{m( 2L_1^2+7\tilde{m}L_1)}\right)$. If one considers a multivariate Gaussian as the target distribtuion, then for all $x \in \mathbb{R}^d$ and $n \in \mathbb{N}$,
\[
W_2^2(\delta_x \tilde{R}_{\gamma}^n, \pi) \leq e^{-mn\gamma}\left(2|x-x^{\ast}|^2+\frac{2d}{m}\right)+\tilde{C}\gamma^3.
\]
where $\tilde{m}$ is given in \eqref{eq:definition-tilde-m}, $\tilde{C}=O(d)$ and its the explicit expression is given in the proof.
\end{corollary}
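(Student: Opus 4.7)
The plan is to specialize the argument used to establish Theorem \ref{thmwassersteinlip} to a Gaussian target $\pi=\mathcal{N}(x^{\ast},A^{-1})$, for which $U(x)=\tfrac12(x-x^{\ast})^{\mathsf T}A(x-x^{\ast})+\text{const}$. Then $\nabla U(x)=A(x-x^{\ast})$, $\nabla^2 U\equiv A$, $\nabla^3 U\equiv 0$, and $\vec\Delta(\nabla U)\equiv 0$. Consequently $L=L_2=0$ in \ref{h6}--\ref{h7}, while $L_1=|A|$ and $m=\lambda_{\min}(A)$. The diffusion matrix $\sigma(\cdot)$ is constant in $x$, and the scheme \eqref{eqnschemelip} collapses to the linear recursion
\[
\tilde X_{n+1}=x^{\ast}+P_\gamma(\tilde X_n-x^{\ast})+\sqrt{2\gamma}\,\sigma\, Z_{n+1},
\]
with $P_\gamma:=\mathbf{I}_d-A\gamma+\tfrac12(A\gamma)^2$ and $\sigma=\sqrt{\mathbf{I}_d-A\gamma+\tfrac13(A\gamma)^2}$, which is exactly the truncation of the exact one-step OU map at the orders dictated by the It\^o--Taylor expansion.

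I would revisit the proof of Theorem \ref{thmwassersteinlip} and identify where the factor $O(d^4)$ in $\bar C$ enters. Those contributions arise from applying \ref{h6}--\ref{h7} to control the It\^o--Taylor remainder (they carry the constants $L$ and $L_2$) and from invoking moment bounds of the form $\mathbb{E}|\tilde X_n-x^{\ast}|^{2k}\sim(d/m)^k$ with $k\ge 2$. In the Gaussian case each such term vanishes identically, leaving only the truncation of $e^{-A\gamma}$ at order two in the drift and of $e^{-A(t_{n+1}-s)}$ at order one inside the Wiener integral. Using a synchronous coupling with the exact Ornstein--Uhlenbeck solution $x_{t_{n+1}}=x^{\ast}+e^{-A\gamma}(x_{t_n}-x^{\ast})+\sqrt{2}\int_{t_n}^{t_{n+1}}e^{-A(t_{n+1}-s)}\,dw_s$, the one-step discrepancy is
\[
x_{t_{n+1}}-\tilde X_{n+1}=e^{-A\gamma}(x_{t_n}-x^{\ast})-P_\gamma(\tilde X_n-x^{\ast})+\sqrt{2}\!\int_{t_n}^{t_{n+1}}\!\bigl(e^{-A(t_{n+1}-s)}-\mathbf{I}_d+A(t_{n+1}-s)\bigr)\,dw_s.
\]
Standard estimates give $|e^{-A\gamma}-P_\gamma|\le\tfrac16|A|^3\gamma^3 e^{|A|\gamma}$, contributing $O(|A|^6\gamma^6\,\mathbb{E}|\tilde X_n-x^{\ast}|^2)=O(d\gamma^6/m)$ per step, and the Frobenius bound $\int_0^\gamma |e^{-Au}-\mathbf{I}_d+Au|_{\mathsf F}^2\,du\le\tfrac{d}{20}|A|^4\gamma^5 e^{2|A|\gamma}$, contributing $O(d|A|^4\gamma^5)$ per step; both are linear in $d$.

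Plugging these one-step bounds into the contraction recursion used in the proof of Theorem \ref{thmwassersteinlip} (now with the exact OU rate $e^{-m\gamma}$) and summing the resulting geometric series yields an explicit $\tilde C\gamma^3$ bound on $\mathbb{E}|x_{t_n}-\tilde X_n|^2$ with $\tilde C$ scaling linearly in $d$. Combining this with the OU contraction $W_2^2(\delta_x P_{n\gamma},\pi)\le e^{-mn\gamma}(|x-x^{\ast}|^2+d/m)$ through the triangle inequality for $W_2$ delivers the stated estimate. The main technical point, and the place where the Gaussian structure pays off, is the verification that no surviving term involves a higher moment than $\mathbb{E}|\tilde X_n-x^{\ast}|^2=O(d/m)$: the drift truncation error enters linearly in $\tilde X_n-x^{\ast}$ because the scheme is linear, and the noise truncation error is deterministic. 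This removes the factor $d^3$ present in the generic Lipschitz regime and produces an explicit $\tilde C=O(d)$ expressible in terms of $d$, $m$, and $|A|$.
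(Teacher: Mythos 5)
Your proposal is correct but follows a genuinely different route from the paper's own proof. The paper re-runs the continuous-time Gronwall argument of Theorem~\ref{thmwassersteinlip} line by line, observing only that a Gaussian target forces $L=L_2=0$ (hence $\vec\Delta(\nabla U)\equiv 0$ and $\nabla^2 U(\tilde x_r)-\nabla^2 U(\tilde x_{n\gamma})\equiv 0$), so that $q_1=(4L_1^2+4)d$ in Proposition~\ref{lip2ndmoment} and the surviving one-step contribution is $\gamma^4\,\tfrac{4}{m}\!\left(4L_1^6|\tilde x_{n\gamma}-x^\ast|^2+4dL_1^4\right)$; summing the induction and invoking Theorem~1 of \cite{DM16} gives $\tilde C=\tfrac{16L_1^4 e^m}{m^2}\!\left(d+L_1^2\!\left(|x-x^\ast|^2+q_1/\tilde m\right)\right)$. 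You instead exploit the affine structure of both the exact OU transition $x_{t_{n+1}}=x^\ast+e^{-A\gamma}(x_{t_n}-x^\ast)+\sqrt2\int e^{-A(t_{n+1}-s)}dw_s$ and the scheme $\tilde X_{n+1}=x^\ast+P_\gamma(\tilde X_n-x^\ast)+\sqrt2\int(\mathbf I_d-A(t_{n+1}-s))dw_s$ under a synchronous coupling, and control only the Taylor remainders $e^{-A\gamma}-P_\gamma=O(|A|^3\gamma^3)$ and $e^{-Au}-\mathbf I_d+Au=O(|A|^2u^2)$. This is more transparent and conceptually cleaner: it makes explicit that in the Gaussian case nothing beyond second moments of $\tilde X_n-x^\ast$ ever enters, which is precisely why the $d^3$ overhead in $\bar C$ disappears. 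One further comment: your per-step bounds are $O(d\gamma^6)$ for the drift truncation and $O(d\gamma^5)$ for the Wiener-integral truncation (the latter being independent of $\mathscr F_{n\gamma}$, so no loss from Young's inequality), and the resulting fixed point of the recursion is actually $O(d\gamma^4/m^2)$, i.e.\ one full power of $\gamma$ better than the $O(d\gamma^3)$ that the paper's machinery produces in this special case; your statement that the geometric series yields ``an explicit $\tilde C\gamma^3$ bound'' is therefore correct but not tight, and the paper's bound has the same slack because it loses a factor $\gamma$ by applying Young's inequality inside $\tfrac{d}{dt}\mathbb E|e_t|^2$ rather than exploiting the martingale cancellation at grid points as you do.
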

\begin{remark} One notices that only three times continuous differentiablility on the potential $U$ is required in the case of super-linear coefficients, while we assume four times continuous differentiablility in the case of Lipschitz coefficients. This further smoothness in the Lipschitz case is reuqired in order to obtain a better dependence on the dimension of the bound in Wasserstein distance, i.e. to obtain $\bar{C} = O(d^4)$ in Theorem \ref{thmwassersteinlip}. While one can still obtain similar results in Theorem \ref{thmwassersteinlip} and Corollary \ref{thmwassersteingau} without assuming further smoothness, the dependence on dimension of the bound will increase to $O(d^6)$.
\end{remark}

\section{Related work and discussion} \label{relatedwork}
{\bf Higher order scheme.} The higher order LMC algorithm \eqref{eqnscheme2} is obtained using the It\^o-Taylor (Wagner-Platen) expansion, see \cite{Platen-Wagner} and Section 10.4 in \cite{kloeden2011numerical}. It is suggested in Section 10.6 in \cite{kloeden2011numerical}  that any higher order schemes can be constructed using such an approach. One notices that the LMCO' algorithm considered in \cite{dal17user}, which is obtained using the LMC algorithm with the Ozaki discretization, is close to the algorithm \eqref{eqnschemelip}, which is the counterpart of the algorithm \eqref{eqnscheme2} in the Lipschitz case. The difference between the two algorithms is that there is one more term $\vec{\Delta}(\nabla U)$ in \eqref{eqnschemelip}. Without this term, the rate of convergence of the algorithm \eqref{eqnschemelip} reduces from 1.5 to 1 in the Wasserstein-2 distance. \\
{\bf Tamed coefficients.} The algorithm \eqref{eqnscheme2} of the SDE \eqref{eqn:sde} with superlinear coefficient is constructed using a taming technique, which is first introduced in \cite{hutzenthaler2012} for the Euler scheme and is further developed in \cite{eulerscheme}. Then, a uniform taming approach is suggested in \cite{milsteinscheme} which allows natural extensions of the taming technique to higher order schemes. In other words, it suggests that each coefficient in the numerical scheme should be multiplied by the same taming factor (see Remark 2 in \cite{milsteinscheme}). However, in this article, different terms in the scheme \eqref{eqnscheme2} have different taming facotrs as defined in \eqref{deftamedcoeff}. The reason is that, instead of a direct application of It\^o's formula, one uses the derivation of the log-sobolev inequality to obtain exponential moment bounds for the numerical scheme  \eqref{eqnscheme2} in an infinite time horizon (see Proposition \ref{momentbound}  for a detailed proof). This requires an additional assumption \ref{h1}. Moreover, the choice of the taming factors is crucial in the sense that the tamed coefficients should converge to the original coefficients with a desired rate.\\
{\bf Rate of convergence.} In the context of SDEs with superlinear coefficients, the strong convergence results of the tamed numerical schemes have been studied in depth in literature. One may refer to \cite{Kruse et al.}, \cite{hutzenthaler2012}, \cite{milsteinscheme}, \cite{eulerscheme}, \cite{SabanisAoAP}, \cite{WangGan} for the convergence resutls of tamed Euler and Milstein schemes in a finite time. In addition, Theorem 1 in \cite{SabanisYing} provides a strong convergence result in $\mathscr{L}^2$ of the tamed order 1.5 scheme. As mentioned in the introduction, while the aforementioned results focused on the convergence rates in finite time horizons, \cite{tula} considers a TULA algorithm which provides rate 1 in Wasserstein-2 distance and rate 1/2 in total variation. By extending the results in \cite{SabanisYing} and \cite{tula}, Theorem \ref{thmwasserstein} and Theorem \ref{thmtv} state that the convergence results of the HOLA algorithm \eqref{eqnscheme2} in Wasserstein-2 distance and in total variation can be improved to rate $1+\beta/2$  and rate 1 repectively. One notices that the assumptions \ref{h1} and \ref{h4} are the same as the assumptions in \cite{tula}, while the local H\"{o}lder condition \ref{h3} is the same as the assumption A-4 in \cite{SabanisYing}.\\
As for the SDEs with Lipschitz coefficients, \cite{dal16}, \cite{dal17user}, \cite{DM16} and \cite{DM17} provide convergence results in Wasserstein-2 distance and in total variation for the ULA algorithm. In addition, LMCO and LMCO' algorithms are considered in \cite{dal16} and \cite{dal17user} which make use of the Hessian of $U$, however, the rate of convergence is shown to have the same order as ULA in Wasserstein-2 distance. Under \ref{h4} - \ref{h7}, Theorem \ref{thmwassersteinlip} provides a convergence result for the scheme \eqref{eqnscheme2} in Wasserstein-2 distance, which is of order 1.5. It improves  existing results by imposing four times differentiability on the potential $U$ and an additional assumption \ref{h7}. \\ 
{\bf Non-asymptotic bounds and computational complexity.} The nonasymptotic bounds in total variation between the ULA algorithm and SDE \eqref{eqn:sde} are established in \cite{dal16}. Subsequently, improved results, including the Wasserstein-2 distance, are provided in \cite{dal17user}, \cite{DM16} and \cite{DM17} with better dependence on the dimension. Theorem \ref{thmwassersteinlip} in this article provides the non-asymptotic bound between the HOLA algorithm \eqref{eqnscheme2} and the target distribution $\pi$ in Wasserstein-2 distance  for the Lipschitz case. It shows that the dependence on dimension is $O(d^4 )$, and the number of iterations required to reach $\varepsilon$ percision level is given precisely by $n \geq \left((2\bar{C})^{\frac{1}{3}}/m\varepsilon^{\frac{2}{3}}\right)\log\left(4(|x-x^{\ast}|^2+d/m)/\varepsilon^2\right)$ with $\bar{C}=O(d^4)$. This implies that compared to results in \cite{dal17user} and \cite{DM16}, the HOLA algorithm \eqref{eqnscheme2} requires fewer steps to reach a suitably high precision level, i.e. for $\varepsilon < O(d^{-1})$. As for the computational complexity of the algorithm \eqref{eqnscheme2}, it shows in \cite{ag93} that the computational cost for the Hessian-vector product is not more expensive than evaluating the gradient. Moreover, although the computational cost for one iteration increases due to third derivatives of $U$, there are techniques which can be employed to reduce the computational cost dramatically, see \cite{guw2000}, \cite{guw08} and references therein.

\section{Local Lipschitz case}\label{locallipcase}
\subsection{Moment bounds}
It is a well-known result that by \ref{h1} and \ref{h3}, the SDE \eqref{eqn:sde} has a unique strong solution. One then needs to obtain moment bounds of the SDE \eqref{eqn:sde} and the numerical scheme \eqref{eqnscheme2} before considering the convergence results. 

By using Foster-Lyapunov conditions, one can obtain the exponential moment bounds for the solution of SDE \eqref{eqn:sde}. More concretely, the application of Theorem 1.1, 6.1 in \cite{RT96} and Theorem 2.2 in \cite{MT93} yields the following results.
\begin{prop} \label{sdemomentbound} Assume \ref{h1} and \ref{h3} are satisfied. For all $a>0$, there exists $b_a>0$, such that for all $x \in \mathbb{R}^d$,
\[
\mathscr{A}V_a(x) \leq -a V_a(x)+ab_a,
\]
and
\[
\sup_{t\geq 0} P_tV_a(x) \leq V_a(x)+b_a.
\]
Furthermore, there exist $ C_a >0$ and $\rho_a \in (0,1)$ such that for all $t>0$ and probability measures $\mu_0, \nu_0$ on $(\mathbb{R}^d, \mathcal{B}(\mathbb{R}^d))$ satisfying $\mu_0(V_a) +\nu_0(V_a)<+\infty$,
\[
\|\mu_0 P_t-\nu_0 P_t\|_{V_a} \leq C_a \rho_a^t \|\mu_0 - \nu_0\|_{V_a}, \quad \|\mu_0P_t-\pi\|_{V_a} \leq C_a \rho_a^t \mu_0(V_a).
\]
\end{prop}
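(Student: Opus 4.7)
The plan is to (i) compute $\mathscr{A}V_a$ and use \ref{h1} to obtain a pointwise drift inequality, (ii) integrate this via Dynkin to get the uniform-in-time moment bound, and (iii) feed the drift inequality into the Meyn--Tweedie geometric ergodicity framework of \cite{MT93,RT96} to conclude the $V_a$-norm convergence.

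For the drift condition, writing $\phi(x) = (1+|x|^2)^{1/2}$ and differentiating $V_a = e^{a\phi}$ yields a decomposition of the form
\[
\mathscr{A}V_a(x) = a V_a(x)\Bigl[-\tfrac{x\cdot\nabla U(x)}{\phi(x)} + \tfrac{a|x|^2}{\phi(x)^2} + \tfrac{d+(d-1)|x|^2}{\phi(x)^3}\Bigr].
\]
Hypothesis \ref{h1} forces the first bracketed term to diverge to $-\infty$ (since $|\nabla U|\to\infty$ and its inner product with $x/|x|$ stays bounded below by a positive constant), while the other two terms stay $O(a)$ and $O(1/|x|)$ respectively. Hence the bracket is $\leq -1$ outside a large ball $\{|x|\leq R\}$, which gives $\mathscr{A}V_a \leq -aV_a$ there; on the ball, $\mathscr{A}V_a/a + V_a$ is bounded (using the polynomial growth of $|\nabla U|$ from Remark~\ref{poly}), so taking $b_a$ to be its supremum yields $\mathscr{A}V_a \leq -aV_a + ab_a$ globally.

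The uniform moment bound then follows by a localization-plus-Dynkin argument applied to $u(t) := P_tV_a(x)$: stop at the hitting times $\tau_R$ of balls of radius $R$, apply Dynkin's formula on $[0, t\wedge \tau_R]$, and let $R\uparrow \infty$ using the first-step drift bound to control the stopped process. This yields $u'(t) \leq -au(t) + ab_a$ in the integral sense, whence a Gr\"onwall/comparison argument gives $u(t) \leq b_a + (V_a(x) - b_a)e^{-at}$ and hence the uniform bound $\sup_{t\geq 0} P_tV_a(x) \leq V_a(x) + b_a$.

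For the geometric ergodicity, the drift inequality just established is exactly the Foster--Lyapunov input required by Meyn--Tweedie. The noise $\sqrt{2}\,dw_t$ in \eqref{eqn:sde} is non-degenerate, so (standard H\"ormander / Girsanov arguments) the semigroup has a smooth strictly positive transition density; consequently every compact set is petite for $(P_t)$ and the process is irreducible and aperiodic. Theorems 1.1 and 6.1 of \cite{RT96} together with Theorem 2.2 of \cite{MT93} then produce constants $C_a > 0$ and $\rho_a\in(0,1)$ for which both claimed $V_a$-norm inequalities hold. I expect the only mildly delicate point to be the localization that justifies Dynkin's formula for the exponentially growing $V_a$; verification of the Meyn--Tweedie structural hypotheses is essentially automatic for non-degenerate elliptic diffusions on $\mathbb{R}^d$ with a confining potential.
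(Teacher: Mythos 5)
Your proposal is correct and follows essentially the same route as the paper, which simply defers to Proposition 1 of \cite{tula} after announcing that the result is an application of Theorems 1.1 and 6.1 of \cite{RT96} together with Theorem 2.2 of \cite{MT93}. Your explicit computation of $\mathscr{A}V_a$, the drift inequality via \ref{h1}, the Dynkin/Gr\"onwall step (which is exactly what Theorem 2.2 of \cite{MT93} packages), and the petite-compacts argument from ellipticity are precisely the ingredients used there.
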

\begin{proof}
One can refer to Proposition 1 in \cite{tula} for the detailed proof.
\end{proof}
The proposition below provides a uniform bound for exponential moments of the Markov chain $(\overline{X}_{k})_{k \geq 0}$.
\begin{prop}\label{momentbound} Assume \ref{h1} and \ref{h3} are satisfied. Then, there exist constants $b, c, M>0$, such that for all $x \in \mathbb{R}^d$ and $\gamma \in (0,1)$,
\[
R_{\gamma}V_c(x) \leq e^{-\frac{7}{3}c^2\gamma }V_c(x)+\gamma b \mathbf{1}_{\overline{\mathrm{B}}(0,M)}(x),
\]
and for all $n \in \mathbb{N}$
\[
R_{\gamma}^nV_c(x) \leq e^{-\frac{7}{3}c^2n\gamma }V_c(x)+\frac{3b}{7c^2}e^{\frac{7}{3}c^2\gamma }.
\]
Moreover, this guarantees that the Gaussian kernel $R_{\gamma}$ has a unique invariant measure $\pi_{\gamma}$ and $R_{\gamma}$ is geometrically ergodic w.r.t. $\pi_{\gamma}$.
\end{prop}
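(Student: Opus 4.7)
The strategy is to combine the Gaussian log-Sobolev (Herbst) concentration inequality for the noise with a uniform-in-$\gamma$ dissipativity bound derived from \ref{h1} and the taming choices in \eqref{deftamedcoeff}. Writing the one-step update as $\overline{X}_1 = x + \gamma\mu_\gamma(x) + \sqrt{2\gamma}\sigma_\gamma(x) Z$ with $Z \sim \mathcal{N}(0, I_d)$, consider the map $F(z) := c(1+|x+\gamma\mu_\gamma(x)+\sqrt{2\gamma}\sigma_\gamma(x)z|^2)^{1/2}$. Since $y \mapsto (1+|y|^2)^{1/2}$ is $1$-Lipschitz, $F$ is Lipschitz in $z$ with constant at most $c\sqrt{2\gamma}\,|\sigma_\gamma(x)|$; Herbst's argument then yields
\[
R_\gamma V_c(x) \;=\; \mathbb{E}\bigl[e^{F(Z)}\bigr] \;\leq\; \exp\bigl(\mathbb{E}[F(Z)] + c^2\gamma\,|\sigma_\gamma(x)|^2\bigr).
\]

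Two ingredients now enter. First, a uniform bound on $|\sigma_\gamma(x)|^2$: by Remark~\ref{tametermbd} the eigenvalues $\lambda$ of $\nabla^2 U_\gamma(x)$ lie in $[-1/\gamma,1/\gamma]$, so setting $t := \gamma\lambda \in [-1,1]$ the eigenvalues $1 - t + t^2/3$ of $\sigma_\gamma(x)^2$ lie in $[1/3, 7/3]$, giving $|\sigma_\gamma(x)|^2 \leq 7/3$ uniformly in $x,\gamma$. Second, a sharp bound on $\mathbb{E}[F(Z)]$ obtained by expanding $(1+|m + \sqrt{2\gamma}\sigma_\gamma(x) z|^2)^{1/2}$ to second order in $z$ about $z=0$ (the linear term vanishes under Gaussian integration, and the Hessian is bounded uniformly), and then expanding $(1+|x+\gamma\mu_\gamma(x)|^2)^{1/2}$ about $(1+|x|^2)^{1/2}$. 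This reduces the drift inequality to showing that, for $|x| > M$ with $M>0$ independent of $\gamma$, the aggregate contribution
\[
2\gamma\langle x, \mu_\gamma(x)\rangle + \gamma^2|\mu_\gamma(x)|^2 + 2\gamma|\sigma_\gamma(x)|_{\mathsf F}^2
\]
is sufficiently negative. Using \ref{h1} (which for $|x|$ large gives $x\cdot\nabla U(x) \geq \alpha|x||\nabla U(x)|$ with $|\nabla U(x)|$ diverging) together with the taming definitions in \eqref{deftamedcoeff}, a case analysis on whether $\gamma|\nabla U(x)|$ exceeds $1$ delivers a uniform lower bound $\gamma\langle x, \nabla U_\gamma(x)\rangle \geq c_0$ for some $c_0 > 0$; the taming factors for $(\nabla^2 U\nabla U)_\gamma$ and $\vec{\Delta}(\nabla U)_\gamma$ are calibrated precisely so that the associated $\gamma^2$-terms in $\mu_\gamma$ contribute only $O(\gamma)$ perturbations. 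Choosing $c$ small enough that $cc_0$ dominates the $(14/3)c^2\gamma$ residue coming from $|\sigma_\gamma|^2 \leq 7/3$ in Herbst together with the noise bias, one obtains $R_\gamma V_c(x) \leq e^{-(7/3)c^2\gamma}V_c(x)$ on $\{|x| > M\}$. On the compact set $\{|x| \leq M\}$ the same bookkeeping combined with the bounds in Remark~\ref{tametermbd} gives $R_\gamma V_c(x) \leq e^{-(7/3)c^2\gamma}V_c(x) + \gamma b$ for a suitable $b > 0$, completing the one-step drift condition.

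The iterated bound follows by a straightforward induction: with $\rho := e^{-(7/3)c^2\gamma}$ and $\mathbf{1}_{\overline{\mathrm{B}}(0,M)} \leq 1$, one has $R_\gamma^n V_c(x) \leq \rho^n V_c(x) + \gamma b\sum_{k=0}^{n-1}\rho^k \leq \rho^n V_c(x) + \gamma b/(1-\rho)$, and the elementary inequality $1 - e^{-u} \geq u e^{-u}$ applied with $u = (7/3)c^2\gamma$ produces the constant $\tfrac{3b}{7c^2}e^{(7/3)c^2\gamma}$ stated in the proposition. Existence and uniqueness of $\pi_\gamma$, together with geometric ergodicity, then follow from standard Meyn--Tweedie theory (cf.\ \cite{MT93}): the Foster--Lyapunov condition just established, combined with the minorization provided by the strict positivity of the Gaussian transition density of $R_\gamma(x,\cdot)$ (which makes every sublevel set of $V_c$ a small set), is precisely the required input. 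The main obstacle will be the dissipativity estimate itself, since the three distinct taming factors in \eqref{deftamedcoeff} behave qualitatively differently in the regimes $\gamma|\nabla U(x)| \leq 1$ and $\gamma|\nabla U(x)| > 1$, and one must verify that they combine with \ref{h1} into a lower bound on $\gamma\langle x, \nabla U_\gamma(x)\rangle$ that is uniform in $\gamma \in (0,1)$ and large enough to absorb all remaining positive contributions.
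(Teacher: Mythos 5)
Your overall route matches the paper's: use log--Sobolev/Herbst for the Gaussian step noise to reduce to bounding $\mathbb{E}[(1+|\overline X_1|^2)^{1/2}\,|\,\overline X_0 = x]$, establish a Lyapunov drift from \ref{h1} and the taming, and finish with Meyn--Tweedie. Your treatment of $\sigma_\gamma$ (eigenvalues of $\sigma_\gamma^2$ in $[1/3,7/3]$, hence $|\sigma_\gamma|^2 \leq 7/3$) is correct and yields exactly the $e^{(7/3)c^2\gamma}$ factor; the iterated bound and the $1 - e^{-u} \geq ue^{-u}$ step are fine. Two points, though.

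First, and substantively: your dissipativity claim is stated incorrectly, and the correct version reveals the actual subtlety. You claim a uniform lower bound $\gamma\langle x, \nabla U_\gamma(x)\rangle \geq c_0 > 0$, but for a fixed $x$ this quantity tends to $0$ as $\gamma \to 0$; no such bound can hold. Moreover, even if it did, a decrement in $A_\gamma(x) := |x + \gamma\mu_\gamma(x)|^2$ that is a fixed constant $c_0$ (rather than growing like $|x|$) would translate, after the square root, into a decrement of order $c_0/(1+|x|^2)^{1/2}$, which vanishes as $|x|\to\infty$ and cannot give a drift condition uniform on $\{|x|\geq M\}$. What is actually needed is the bound
\[
\langle x, \nabla U_\gamma(x)\rangle \;-\; \tfrac{\gamma}{2}\,|\nabla U_\gamma(x)|^2 \;\geq\; \kappa_1\,|x| \qquad\text{for }|x|\geq M,
\]
uniformly in $\gamma\in(0,1)$, which the paper establishes as a consequence of \ref{h1} (their \eqref{numericala1}). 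The point you downplay is that $\gamma^2|\nabla U_\gamma(x)|^2$ in the expansion of $A_\gamma(x)$ is \emph{not} an $O(\gamma)$ perturbation: where the taming saturates (i.e.\ $\gamma|\nabla U(x)|$ large) one has $|\nabla U_\gamma(x)| \asymp 1/\gamma$, making $\gamma^2|\nabla U_\gamma(x)|^2 = O(1)$. It must be absorbed by the negative term $-2\gamma\langle x, \nabla U_\gamma(x)\rangle$, and this is precisely what the combined inequality accomplishes. From it one gets $A_\gamma(x) + (14/3)d\gamma \leq |x|^2 - \gamma\kappa_1|x|$, and then $(1+|x|^2 - \gamma\kappa_1|x|)^{1/2} \leq (1+|x|^2)^{1/2} - (\gamma\kappa_1/2)\cdot|x|/(1+|x|^2)^{1/2}$, where the monotonicity of $u\mapsto u/(1+u^2)^{1/2}$ yields a decrement uniform over $\{|x|\geq M\}$.

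Second, a smaller stylistic matter: your ``choose $c$ small enough'' framing is not quite right here. The constant $c$ is fixed precisely by $\kappa_1$ and $M$, namely $c = 3\kappa_1 M /(28(1+M^2)^{1/2})$, so that the drift decrement exactly matches $-(14/3)c^2\gamma$ and, after multiplying by the Herbst factor $e^{(7/3)c^2\gamma}$, leaves $e^{-(7/3)c^2\gamma}$ as stated. Also, on the step where you bound $\mathbb{E}[F(Z)]$, the paper simply applies Jensen with the concavity of $u\mapsto(1+u)^{1/2}$ to get $\mathbb{E}[(1+|\overline X_1|^2)^{1/2}] \leq (1 + \mathbb{E}|\overline X_1|^2)^{1/2}$; your second-order Taylor-in-$z$ with Gaussian integration would work as well (the Hessian norm is $\leq 2\gamma|\sigma_\gamma|^2 \leq (14/3)\gamma$), but it requires a remainder argument that Jensen sidesteps for free.
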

\begin{proof} We use the scheme \eqref{eqnscheme2} throughout the proof. First, one observes that by \ref{h1}, for $\gamma \in (0,1)$, the following holds
\begin{equation}\label{numericala1}
\liminf_{|x|\rightarrow +\infty}\frac{x}{|x|}\nabla U_{\gamma}(x) -\frac{\gamma}{2|x|}|\nabla U_{\gamma}(x)|^2 >0.
\end{equation}
Indeed, by \ref{h1}, there exist $M', \kappa >0$ such that for all $|x|\geq M'$, $x \in \mathbb{R}^d$, $x\nabla U(x) \geq \kappa |x||\nabla U(x)|$. Then, we have for all $|x|\geq M'$, $x \in \mathbb{R}^d$,
\begin{align*}
& \frac{x}{|x|}\nabla U_{\gamma}(x) -\frac{\gamma}{2|x|}|\nabla U_{\gamma}(x)|^2 \\
		& \geq \frac{1}{2|x|(1+\gamma^{3/2}|\nabla U(x)|^{3/2})^{2/3}}\left(2\kappa|x||\nabla U(x)| -\frac{\gamma|\nabla U(x)|^2}{(1+\gamma^{3/2}|\nabla U(x)|^{3/2})^{2/3}} \right)\\
		&  \geq \frac{|\nabla U(x)|}{2|x|(1+\gamma^{3/2}|\nabla U(x)|^{3/2})^{2/3}}\left(2\kappa|x| -\frac{\sqrt[3]{2}\gamma|\nabla U(x)|}{1+\gamma|\nabla U(x)|} \right)\\
		&  \geq \frac{|\nabla U(x)|}{2(1+\gamma^{3/2}|\nabla U(x)|^{3/2})^{2/3}}\left(2\kappa -\frac{\sqrt[3]{2} }{|x|}\right).
\end{align*}
Meanwhile, by \ref{h1}, there exist $M'', K>0$ such that for all $|x|\geq M''$, $x \in \mathbb{R}^d$, $|\nabla U|\geq K$. Note that $f(x) = x/(1+x^{3/2})^{2/3}$ is a non-decreasing function for all $x \geq 0$. Then, one obtains \eqref{numericala1}, since for all $x \in \mathbb{R}^d$, $|x| \geq \max(M', M'', \sqrt[3]{2}\kappa^{-1})$
\[
 \frac{x}{|x|}\nabla U_{\gamma}(x) -\frac{\gamma}{2|x|}|\nabla U_{\gamma}(x)|^2 \geq \frac{\kappa K}{2(1+\gamma^{3/2}K^{3/2})^{2/3}} >0.
\]
The function $f(x) = (1+|x|^2)^{1/2}$ is Lipschitz continuous with Lipschitz constant equal to 1. Let $ \overline{X}_0=x$, then for all $x \in \mathbb{R}^d$, applying log Sobolev inequality (see Proposition 5.5.1 in \cite{ledoux} and Appendix \ref{logsobproof} for a detailed proof) gives,
\begin{equation}\label{logsob}
R_{\gamma}V_a(x) =\mathbb{ E}_x(V_a(\overline{X}_1))\leq e^{\frac{7}{3}\gamma a^2}\exp\left\{a\mathbb{ E}((1+|\overline{X}_1|^2)^{1/2}|\overline{X}_0=x)\right\},
\end{equation}
which using Jensen's inequality yields
\begin{align} \label{expression1}
\begin{split}
&R_{\gamma}V_a(x) \\
&\leq e^{\frac{7}{3}\gamma a^2}  \exp\left\{a\left(1+\mathbb{ E}\left(\left.\left|\overline{X}_0+\mu_{\gamma}(\overline{X}_0)\gamma+\sigma_{\gamma}(\overline{X}_0)\sqrt{2\gamma}Z_1\right|^2\right|\overline{X}_0=x\right) \right)^{1/2}\right\}.
\end{split}
\end{align}
One calculates 
\begin{align}\label{sigma}
\begin{split}
&\mathbb{ E}\left[\left.\left|\sigma_{\gamma}(\overline{X}_0)\sqrt{2\gamma} Z_1\right|^2\right|\overline{X}_0=x\right]\\
&\qquad \leq 2\gamma \left|\sigma_{\gamma}(x)\right|^2\mathbb{ E}\left[\left| Z_1\right|^2\right] \\
									& \qquad \leq 2\gamma  d  +\frac{2\gamma^3}{3}\left|\nabla^2 U_{\gamma}(x)\right|^2 d+2\gamma^2\left|\nabla^2 U_{\gamma}(x)\right| d\\
									& \qquad\leq \frac{14}{3}d\gamma.
\end{split}
\end{align}
Then, by inserting \eqref{sigma} into \eqref{expression1}, one obtains
\begin{align}\label{prop1eqn1}
\begin{split}
R_{\gamma}V_a(x) &\leq e^{\frac{7}{3}\gamma a^2}  \exp\left\{a\left(1+A_{\gamma}(x) +\frac{14}{3}d\gamma \right)^{1/2}\right\},
\end{split}
\end{align}
where
\[
A_{\gamma}(x) =\left|x-\nabla U_{\gamma}(x)\gamma +\frac{\gamma^2}{2}\left(\left(\nabla^2 U\nabla U\right)_{\gamma}(x)-\vec{\Delta}(\nabla U)_{\gamma}(x)\right)\right|^2.
\]
Then, expanding the square yields
\begin{align*}
A_{\gamma}(x) 	& = |x|^2-2\gamma x\nabla U_{\gamma}(x)+\gamma^2\left|\nabla U_{\gamma}(x)\right|^2 -\gamma^2 x\vec{\Delta}(\nabla U)_{\gamma}(x)\\
					&\hspace{1em} +\frac{\gamma^4}{4}\left|\vec{\Delta}(\nabla U)_{\gamma}(x)\right|^2	+ \gamma^2x\left(\nabla^2 U\nabla U\right)_{\gamma}(x) \\
					&\hspace{1em} -\gamma^3 \nabla U_{\gamma}(x)\left(\nabla^2 U\nabla U\right)_{\gamma}(x) +\gamma^3 \nabla U_{\gamma}(x)\vec{\Delta}(\nabla U)_{\gamma}(x)\\
					&\hspace{1em} +\frac{\gamma^4}{4}\left|\left(\nabla^2 U\nabla U\right)_{\gamma}(x) \right|^2-\frac{\gamma^4}{2}\left(\nabla^2 U\nabla U\right)_{\gamma}(x)\vec{\Delta}(\nabla U)_{\gamma}(x).
\end{align*}
By \eqref{numericala1}, there exist $M_1, \kappa_1 >0$ such that for all $|x|\geq M_1$,
\[
x\nabla U_{\gamma}(x)  -\frac{\gamma}{2}|\nabla U_{\gamma}(x)|^2 >\kappa_1|x|.
\]
Thus, by using Remark \ref{tametermbd}, for all $|x|\geq  \max\{1, M_1\}$,
\begin{align*}
A_{\gamma}(x)+\frac{14}{3}d\gamma  & \leq |x|^2-2\gamma \kappa_1 |x|+\gamma^{3/2}+\frac{1}{4}\gamma^3\\
&\quad +3\gamma+2\gamma^{3/2}+\frac{1}{4}\gamma^2+\frac{1}{2}\gamma^{5/2}+\frac{14}{3}d\gamma \\
								& \leq |x|^2-2\gamma \kappa_1 |x|+\frac{35}{3}d\gamma.
\end{align*}
Denote by $M = \max\{1, M_1,\frac{35}{3}d(\kappa_1)^{-1}\}$, for all $x \in \mathbb{R}^d$, $|x|\geq M$,
\begin{align*}
A_{\gamma}(x)+\frac{14}{3}d\gamma   \leq |x|^2-\gamma\kappa_1 |x|.
\end{align*}
For $t \in [0,1]$, $(1-t)^{1/2} \leq 1-t/2$ and $g(x) = x/(1+x^2)^{1/2}$ is a non-decreasing function for all $x \geq 0$. Then, for all $x \in \mathbb{R}^d$, $|x|\geq M$  
\begin{align}\label{prop1eqn2}
\left(1+A_{\gamma}(x)+\frac{14}{3}d\gamma\right)^{1/2}  	& \leq \left(1+|x|^2\right)^{1/2}\left(1-\frac{7\gamma}{3}\frac{3\kappa_1 |x|}{7(1+|x|^2)} \right)^{1/2}  \nonumber \\
										&\leq  \left(1+|x|^2\right)^{1/2}-\frac{7\gamma}{3} \frac{3\kappa_1 M}{14(1+M^2)^{1/2}}.
\end{align}
By substituting \eqref{prop1eqn2} into \eqref{prop1eqn1} and completing the square, one obtains, for $|x|\geq M$,
\[
R_{\gamma}V_c(x)\leq e^{-\frac{7}{3}c^2\gamma}V_c(x),
\]
where 
\begin{equation}\label{constantc}
c = \frac{3\kappa_1 M}{28(1+M^2)^{1/2}}. 
\end{equation}
For the case $|x|\leq M$, by Remark \ref{tametermbd}, the following result can be obtained:
\[
A_{\gamma}(x) \leq |x|^2 + c_3\gamma (1+M)^{4\rho +2},
\]
where $c_3$ is a positive constant (that depends on $d$ and $L$). Then, by using $(1+s_1+s_2)^{1/2}\leq(1+s_1)^{1/2}+s_2/2$ for $s_1, s_2 \geq 0$,
\[
\left(1+A_{\gamma}(x)+\frac{14}{3}d\gamma\right)^{1/2} \leq (1+|x|^2)^{1/2}+\gamma\left(\frac{c_3}{2}(1+M)^{4\rho+2}+\frac{7d}{3}\right).
\]
Thus,
\[
R_{\gamma}V_c(x) \leq e^{\theta\gamma }V_c(x),
\]
where $\theta = \frac{7}{3}c^2+c(\frac{c_3}{2}(1+M)^{4\rho +2}+\frac{7}{3}d)$. Moreover, for $|x| \leq M$, 
\[
R_{\gamma}V_c(x) -e^{-\frac{7}{3}c^2\gamma }V_c(x) \leq e^{\theta \gamma}(1-e^{-\gamma\left(\frac{7}{3}c^2+\theta\right)})V_c(x) \leq \gamma e^{\theta \gamma}\left(\frac{7}{3}c^2+\theta\right)V_c(x).
\]
Denote by $b=e^{(\theta \gamma+c\sqrt{1+M^2})}\left(\frac{7}{3}c^2+\theta\right)$, one obtains
\[
R_{\gamma}V_c(x) \leq e^{-\frac{7}{3}c^2\gamma }V_c(x)+\gamma b \mathbf{1}_{\overline{\mathrm{B}}(0,M)}(x).
\]
Then by induction, for all $n \in \mathbb{N}$ and $x \in \mathbb{R}$
\begin{align*}
R_{\gamma}^nV_c(x) &\leq e^{-\frac{7}{3}c^2n\gamma }V_c(x)+\frac{1-e^{-\frac{7}{3}c^2n\gamma }}{1-e^{-\frac{7}{3}c^2\gamma }}\gamma b \\
								&\leq e^{-\frac{7}{3}c^2n\gamma }V_c(x)+\frac{3b}{7dc^2}e^{\frac{7}{3}c^2\gamma },
\end{align*}
the last inequality holds since $1-e^{-\frac{7}{3}c^2\gamma }= \int_0^{\gamma}\frac{7}{3}c^2e^{-\frac{7}{3}c^2s}\,ds\geq \frac{7}{3}c^2\gamma e^{-\frac{7}{3}c^2\gamma}$. Finally, since any compact set on $\mathbb{R}^d$ is accessible and small for $R_{\gamma}$, then by section 3.1 in \cite{RT96} and Theorem 15.0.1 in \cite{MTmarkovchain}, for all $\gamma \in (0,1)$, $R_{\gamma}$ has a unique invariant measure $\pi_{\gamma}$ and it is geometrically ergodic w.r.t. $\pi_{\gamma}$.
\end{proof}
The results in Proposition \ref{sdemomentbound} and \ref{momentbound} provide exponential moment bounds for the solution of SDE \eqref{eqn:sde} and the scheme \eqref{eqnscheme2}, which enable us to consider the total variation and Wasserstein distance between the target distribution $\pi$ and the $n$-th iteration of the MCMC algorithm.

\subsection{Proof of Theorem \ref{thmwasserstein}}
In order to obtain the convergence rate in Wasserstein distance, the assumption \ref{h4} is needed, which assumes the convexity of $U$. We consider the linear interpolation of the scheme \eqref{eqnscheme} given by
\begin{align}\label{eqnintp}
\begin{split}
&\bar{x}_{t} = \bar{x}_0-\int_0^{t} \nabla\tilde{ U}_{\gamma}(s,\bar{x}_{\floor{s/\gamma}\gamma}) \, ds+ \sqrt{2}w_t,
\end{split}
\end{align}
for all $t \geq 0$, where
\[
\nabla\tilde{ U}_{\gamma}(s,\bar{x}_{\floor{s/\gamma}\gamma}) =  \nabla U_{\gamma}(\bar{x}_{\floor{s/\gamma}\gamma})+\nabla U_{1,\gamma}(s, \bar{x}_{\floor{s/\gamma}\gamma}) +\nabla U_{2,\gamma}(s,\bar{x}_{\floor{s/\gamma}\gamma}),
\]
with
\[
\nabla U_{1,\gamma}(s,\bar{x}_{\floor{s/\gamma}\gamma}) =- \int_{\floor{s/\gamma}\gamma}^s\left( (\nabla^2 U\nabla U)_{\gamma}(\bar{x}_{\floor{s/\gamma}\gamma}) -\vec{\Delta}(\nabla U)_{\gamma}(\bar{x}_{\floor{s/\gamma}\gamma})\right)\, dr, 
\]
\[
\nabla U_{2,\gamma}(s,\bar{x}_{\floor{s/\gamma}\gamma})=\sqrt{2} \int_{\floor{s/\gamma}\gamma}^s \nabla^2 U_{\gamma}(\bar{x}_{\floor{s/\gamma}\gamma})\, dw_r.
\]
Note that the linear interpolation \eqref{eqnintp} and the scheme \eqref{eqnscheme} coincide at grid points, i.e. for any $n \in \mathbb{N}$, $X_n = \bar{x}_{n \gamma}$. Let $(\mathscr{F}_{t})_{t \geq 0}$ be a filtration associated with $(w_t)_{t \geq 0}$. For any $n \in \mathbb{N}$, denote by $\mathbb{E}^{\mathscr{F}_{n\gamma}}[\cdot]$ the expectation conditional on $\mathscr{F}_{n\gamma}$.
\begin{lemma}\label{rate1}
Assume \ref{h1} and \ref{h3} are satisfied. Then, there exists a constant $C>0$ such that for all $p>0$, $\gamma \in (0,1)$, $n \in \mathbb{N}$, and $t \in [n\gamma, (n+1)\gamma)$,
\[
\mathbb{E}^{\mathscr{F}_{n\gamma}}\left[|\nabla U_{1,\gamma}(t,\bar{x}_{n\gamma})|^p\right] \leq C\gamma^{p}V_c(\bar{x}_{n\gamma}), 
\]
\[
\mathbb{E}^{\mathscr{F}_{n\gamma}}\left[|\nabla U_{2,\gamma}(t,\bar{x}_{n\gamma})|^p\right] \leq C\gamma^{\frac{p}{2}}V_c(\bar{x}_{n\gamma}).
\]
\end{lemma}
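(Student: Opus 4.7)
The plan is to treat the two estimates separately, exploiting the fact that on the interval $[n\gamma,(n+1)\gamma)$ the arguments of both $\nabla U_{1,\gamma}$ and $\nabla U_{2,\gamma}$ are frozen at $\bar{x}_{n\gamma}$ and hence $\mathscr{F}_{n\gamma}$-measurable: the first is then a deterministic Riemann integral (conditionally on $\mathscr{F}_{n\gamma}$), while the second is an It\^o integral with a constant integrand, which reduces to a Brownian increment multiplied by $\sqrt{2}\,\nabla^2 U_{\gamma}(\bar{x}_{n\gamma})$. In both cases the integration window has length at most $\gamma$, producing the factors $\gamma^p$ and $\gamma^{p/2}$ respectively. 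The remaining work, in either case, is to bound powers of the tamed coefficients evaluated at $\bar{x}_{n\gamma}$ by $V_c(\bar{x}_{n\gamma})$. The ingredients I would use are: (i) each tamed coefficient in \eqref{deftamedcoeff} is dominated in norm by its un-tamed counterpart; (ii) by Remark \ref{poly}, those un-tamed coefficients enjoy polynomial growth of a fixed degree in $|x|$ depending only on $\rho,\beta,d$; and (iii) any polynomial is dominated by a constant multiple of $V_c(x)=\exp(c(1+|x|^2)^{1/2})$.

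For the first bound, the triangle inequality inside the integral yields
\[
|\nabla U_{1,\gamma}(t,\bar{x}_{n\gamma})|\leq (t-n\gamma)\bigl(|(\nabla^2U\nabla U)_{\gamma}(\bar{x}_{n\gamma})|+|\vec{\Delta}(\nabla U)_{\gamma}(\bar{x}_{n\gamma})|\bigr)\leq \gamma\,Q(|\bar{x}_{n\gamma}|),
\]
where $Q$ is a polynomial of fixed degree obtained from (i) and (ii). Raising to the $p$-th power and applying (iii) produces $C\gamma^{p} V_c(\bar{x}_{n\gamma})$ pathwise, and taking $\mathbb{E}^{\mathscr{F}_{n\gamma}}[\,\cdot\,]$ is trivial since the upper bound is $\mathscr{F}_{n\gamma}$-measurable.

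For the second bound, the identity $\nabla U_{2,\gamma}(t,\bar{x}_{n\gamma})=\sqrt{2}\,\nabla^2 U_{\gamma}(\bar{x}_{n\gamma})(w_t-w_{n\gamma})$ combined with $|Av|\leq|A||v|$ and the independence of $w_t-w_{n\gamma}$ from $\mathscr{F}_{n\gamma}$ yields
\[
\mathbb{E}^{\mathscr{F}_{n\gamma}}[|\nabla U_{2,\gamma}(t,\bar{x}_{n\gamma})|^p]\leq 2^{p/2}|\nabla^2 U_{\gamma}(\bar{x}_{n\gamma})|^p\,\mathbb{E}[|w_t-w_{n\gamma}|^p].
\]
Standard Gaussian moments give $\mathbb{E}[|w_t-w_{n\gamma}|^p]\leq C_{p,d}\gamma^{p/2}$, and $|\nabla^2 U_{\gamma}(\bar{x}_{n\gamma})|^p$ is again polynomial in $|\bar{x}_{n\gamma}|$ via (i)--(ii), hence bounded by a constant multiple of $V_c(\bar{x}_{n\gamma})$ by (iii). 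I do not expect any real obstacle here: the argument is a routine short-time interpolation estimate, and the only mild subtlety is handling arbitrary $p>0$ (including $p\in(0,1)$, where no BDG-type inequality is required since there is no supremum in $t$), which is accommodated by absorbing the $p$-dependence of the Gaussian moments and of the polynomial-to-exponential comparison into the final constant $C=C(p,d,L,\rho,\beta,c)$.
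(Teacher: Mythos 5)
Your proof is correct and follows essentially the same route as the paper: dominate the tamed coefficients by their un-tamed counterparts, apply the polynomial growth estimates of Remark \ref{poly}, and absorb the resulting polynomial into $V_c(\bar{x}_{n\gamma})$, with the final constant permitted to depend on $p$. The only cosmetic difference is that you exploit the fact that the integrands are frozen at $\bar{x}_{n\gamma}$ to bypass H\"{o}lder's inequality for the Riemann integral and to resolve the stochastic integral explicitly as $\sqrt{2}\,\nabla^2 U_\gamma(\bar{x}_{n\gamma})(w_t-w_{n\gamma})$, whereas the paper invokes H\"{o}lder for $p>1$, Jensen for $p\le 1$, and leaves the second estimate to ``similar arguments.''
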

\begin{proof}
Consider a polynomial function $f(|x|) \in C_{poly}(\mathbb{R}_+, \mathbb{R}_+)$, then there exists a constant $C>0$ such that for all $x \in \mathbb{R}^d$, $f(|x|) \leq CV_c(x)$. For $p > 1$, by applying H\"{o}lder's inequality and Remark \ref{poly}, one obtains
\begin{align*}
&\mathbb{E}^{\mathscr{F}_{n\gamma}}\left[|\nabla U_{1,\gamma}(t,\bar{x}_{n\gamma})|^p\right]\\
		 &= \mathbb{E}^{\mathscr{F}_{n\gamma}}\left[\left|- \int_{n\gamma}^t\left( (\nabla^2 U\nabla U)_{\gamma}(\bar{x}_{n\gamma}) -\vec{\Delta}(\nabla U)_{\gamma}(\bar{x}_{n\gamma})\right)\, dr\right|^p\right]\\
		 & \leq C\gamma^{p-1} \int_{n\gamma}^t \mathbb{E}^{\mathscr{F}_{n\gamma}}\left[\left| (\nabla^2 U\nabla U)_{\gamma}(\bar{x}_{n\gamma})\right|^p\right]\, dr\\
		 &\hspace{1em}+ C\gamma^{p-1} \int_{n\gamma}^t \mathbb{E}^{\mathscr{F}_{n\gamma}}\left[\left|\vec{\Delta}(\nabla U)_{\gamma}(\bar{x}_{n\gamma})\right|^p\right]\, dr\\
		 & \leq C\gamma^p V(\bar{x}_{n\gamma}),
\end{align*}
The second inequality can be proved using similar arguments. For the case $0<p\leq 1$, Jensen's inequality is used to obtain the desired result.
\end{proof}
\begin{lemma}\label{rate2}
Assume \ref{h1} and \ref{h3} are satisfied. Then, there exists a constant $C>0$ such that for all $p>0$, $\gamma \in (0,1)$, $n \in \mathbb{N}$, and $t \in [n\gamma, (n+1)\gamma)$,
\[
\mathbb{E}^{\mathscr{F}_{n\gamma}}\left[|\bar{x}_t -\bar{x}_{n\gamma} |^p\right] \leq C\gamma^{\frac{p}{2}}V_c(\bar{x}_{n\gamma}),
\]
\[
\mathbb{E}^{\mathscr{F}_{n\gamma}}\left[|x_t -x_{n\gamma} |^p\right] \leq C\gamma^{\frac{p}{2}}V_c(x_{n\gamma}).
\]
\end{lemma}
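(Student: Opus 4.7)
The plan is to bound the two displays separately, in both cases by decomposing the increment into an integrated drift part plus a Brownian part, and then estimating each summand.

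For the first display, I would start from the interpolation equation \eqref{eqnintp} to write, for $t\in[n\gamma,(n+1)\gamma)$,
\[
\bar{x}_t-\bar{x}_{n\gamma}
=-(t-n\gamma)\nabla U_{\gamma}(\bar{x}_{n\gamma})
-\int_{n\gamma}^{t}\!\nabla U_{1,\gamma}(s,\bar{x}_{n\gamma})\,ds
-\int_{n\gamma}^{t}\!\nabla U_{2,\gamma}(s,\bar{x}_{n\gamma})\,ds
+\sqrt{2}(w_t-w_{n\gamma}).
\]
I would then estimate the four pieces in $\mathscr{L}^p$. For the first, Remark \ref{poly} gives $|\nabla U_{\gamma}(x)|\le |\nabla U(x)|\le K_2(1+|x|)^{\rho+\beta}$, so $|(t-n\gamma)\nabla U_{\gamma}(\bar{x}_{n\gamma})|^p\le \gamma^p\,K_2^p(1+|\bar{x}_{n\gamma}|)^{p(\rho+\beta)}$, and since every polynomial is dominated by $V_c$ and $\gamma^p\le \gamma^{p/2}$ for $\gamma\in(0,1)$, this is at most $C\gamma^{p/2}V_c(\bar{x}_{n\gamma})$. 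For the second and third pieces, I would apply Hölder in time (or, for $0<p<1$, Jensen the other way around) to push the $\mathbb{E}^{\mathscr{F}_{n\gamma}}$ inside and invoke Lemma \ref{rate1}, obtaining bounds of order $\gamma^{2p}V_c$ and $\gamma^{3p/2}V_c$ respectively, both absorbed into $C\gamma^{p/2}V_c(\bar{x}_{n\gamma})$. The Brownian piece is standard: $\mathbb{E}|\sqrt{2}(w_t-w_{n\gamma})|^p\le C\gamma^{p/2}$. Combining via $|\sum_{i=1}^4 a_i|^p\le 4^{(p-1)_+}\sum_i|a_i|^p$ (or, for $0<p\le 1$, subadditivity of $s\mapsto s^p$) yields the stated estimate.

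For the second display, I would use the SDE \eqref{eqn:sde} to decompose
\[
x_t-x_{n\gamma}=-\int_{n\gamma}^{t}\nabla U(x_s)\,ds+\sqrt{2}(w_t-w_{n\gamma}).
\]
The Brownian term is again $O(\gamma^{p/2})$. For the drift term, I would use Hölder's inequality (or Jensen for $0<p\le 1$) to bound $\mathbb{E}^{\mathscr{F}_{n\gamma}}|\int_{n\gamma}^{t}\nabla U(x_s)\,ds|^p$ by $\gamma^{p-1}\int_{n\gamma}^{t}\mathbb{E}^{\mathscr{F}_{n\gamma}}|\nabla U(x_s)|^p\,ds$ (resp.~$(\int \mathbb{E}^{\mathscr{F}_{n\gamma}}|\nabla U(x_s)|\,ds)^p$), then apply the polynomial growth of $\nabla U$ from Remark \ref{poly} and the time-homogeneous Markov property to replace $\mathbb{E}^{\mathscr{F}_{n\gamma}}V_c(x_s)$ by $P_{s-n\gamma}V_c(x_{n\gamma})$. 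Proposition \ref{sdemomentbound} then bounds this by $V_c(x_{n\gamma})+b_c\le(1+b_c)V_c(x_{n\gamma})$. The resulting bound on the drift term is of order $\gamma^p V_c(x_{n\gamma})\le\gamma^{p/2}V_c(x_{n\gamma})$, and the two pieces combine as before.

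The main obstacle will be the uniform handling of small $p\in(0,1]$ and large $p\ge 1$: Hölder in time and Jensen used inside $\mathbb{E}^{\mathscr{F}_{n\gamma}}$ go in opposite directions in these two regimes, so the estimates must be split. A secondary subtlety is making sure that polynomial moments of $\bar{x}_{n\gamma}$ and $x_s$ are absorbed into $V_c$; this is straightforward since $V_c(x)=\exp(c(1+|x|^2)^{1/2})$ dominates every polynomial in $|x|$ up to a multiplicative constant, and the conditional expectation $\mathbb{E}^{\mathscr{F}_{n\gamma}}V_c(x_s)$ is controlled by Proposition \ref{sdemomentbound}.
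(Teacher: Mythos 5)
Your proposal is correct and takes essentially the same route as the paper: split the increment into drift plus Brownian parts, apply H\"older's inequality for $p>1$ or Jensen's inequality for $0<p\le 1$, and bound the drift via Remark~\ref{poly} together with Lemma~\ref{rate1} (for the interpolated scheme) or via Remark~\ref{poly} together with Proposition~\ref{sdemomentbound} and the Markov property (for the SDE), in each case absorbing polynomial growth into $V_c$. The only cosmetic difference is that you decompose the interpolated increment into four summands and estimate each separately, whereas the paper keeps the drift as a single integral of $\nabla\tilde{U}_\gamma$ and bounds it in one step; both yield the same powers of $\gamma$.
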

\begin{proof}
For $p > 1$, by using H\"{o}lder's inequality, Remark \ref{poly} and Lemma \ref{rate1}, we have
\begin{align*}
&\mathbb{E}^{\mathscr{F}_{n\gamma}}\left[|\bar{x}_t -\bar{x}_{n\gamma} |^p\right] \\
		& = \mathbb{E}^{\mathscr{F}_{n\gamma}}\left[\left|-\int_{n\gamma}^{t} \nabla\tilde{ U}_{\gamma}(s,\bar{x}_{n\gamma}) \, ds+ \sqrt{2}\int_{n\gamma}^{t}\,dw_s \right|^p\right]\\
		& \leq C \gamma^{p-1} \int_{n\gamma}^{t} \mathbb{E}^{\mathscr{F}_{n\gamma}}\left[\left| \nabla U_{\gamma}(\bar{x}_{n\gamma}) + \nabla U_{1, \gamma}(s,\bar{x}_{n\gamma})+ \nabla U_{2, \gamma}(s,\bar{x}_{n\gamma})\right|^p\right]\,ds+ C\gamma^{\frac{p}{2}}\\
		& \leq C\gamma^{\frac{p}{2}}V_c(\bar{x}_{n\gamma}).
\end{align*}
For the case $0<p \leq 1$, one can use Jensen's inequality to obtain
\begin{align*}
\mathbb{E}^{\mathscr{F}_{n\gamma}}\left[|\bar{x}_t -\bar{x}_{n\gamma} |^p\right] 
		& \leq \left(\mathbb{E}^{\mathscr{F}_{n\gamma}}\left|\int_{n\gamma}^{t} \nabla\tilde{ U}_{\gamma}(s,\bar{x}_{n\gamma})  \, ds +\sqrt{2}\int_{n\gamma}^{t}\,dw_s\right| \right)^p\\
		& \leq C\gamma^{\frac{p}{2}}V_c(\bar{x}_{n\gamma}),
\end{align*}
Similarly, for $p>1$, by using H\"{o}lder's inequality, one obtains
\begin{align*}
\mathbb{E}^{\mathscr{F}_{n\gamma}}\left[|x_t -x_{n\gamma} |^p\right] 
		& = \mathbb{E}^{\mathscr{F}_{n\gamma}}\left[\left|-\int_{n\gamma}^{t} \nabla U(x_s) \, ds+ \sqrt{2}\int_{n\gamma}^{t}\,dw_s \right|^p\right]\\
		& \leq C \gamma^{p-1} \int_{n\gamma}^{t} \mathbb{E}^{\mathscr{F}_{n\gamma}}\left(1+|x_s|^{p(\rho+\beta)}\right)\,ds+ C\gamma^{\frac{p}{2}}\\
		& \leq C\gamma^{\frac{p}{2}}V_c(x_{n\gamma}),
\end{align*}
where the last inequality holds due to Proposition \ref{sdemomentbound}. The case $p \in (0,1]$ follows from the application of Jensen's inequality.
\end{proof}
\begin{lemma}\label{rate4terms}
Assume \ref{h1} and \ref{h3} are satisfied. Then, there exists a constant $C>0$ such that for all $\gamma \in (0,1)$, $n \in \mathbb{N}$, and $t \in [n\gamma, (n+1)\gamma)$,
\begin{align*}
\mathbb{E}^{\mathscr{F}_{n\gamma}}\left[|\nabla U(\bar{x}_t)- \nabla U(\bar{x}_{n\gamma})- \nabla U_{1,\gamma}(t,\bar{x}_{n\gamma})-\nabla U_{2,\gamma}(t,\bar{x}_{n\gamma})|^2\right] \leq C\gamma^2V_c(\bar{x}_{n\gamma}).
\end{align*}
\end{lemma}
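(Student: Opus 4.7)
The plan is to apply It\^o's formula componentwise to $\nabla U$ along the interpolation \eqref{eqnintp}. For $s\in[n\gamma,(n+1)\gamma)$ the quadratic variation of $\bar{x}_s$ is $2\mathbf{I}_d\,ds$, so
\begin{align*}
\nabla U(\bar{x}_t) - \nabla U(\bar{x}_{n\gamma}) &= -\int_{n\gamma}^t \nabla^2 U(\bar{x}_s)\nabla\tilde{U}_{\gamma}(s,\bar{x}_{n\gamma})\,ds \\
&\quad + \sqrt{2}\int_{n\gamma}^t \nabla^2 U(\bar{x}_s)\,dw_s + \int_{n\gamma}^t \vec{\Delta}(\nabla U)(\bar{x}_s)\,ds.
\end{align*}
Substituting $\nabla\tilde{U}_{\gamma} = \nabla U_{\gamma} + \nabla U_{1,\gamma} + \nabla U_{2,\gamma}$ and subtracting $\nabla U_{1,\gamma}(t,\bar{x}_{n\gamma}) + \nabla U_{2,\gamma}(t,\bar{x}_{n\gamma})$, the residual splits into four pieces: (a) $\int_{n\gamma}^t\bigl[(\nabla^2 U\nabla U)_{\gamma}(\bar{x}_{n\gamma}) - \nabla^2 U(\bar{x}_s)\nabla U_{\gamma}(\bar{x}_{n\gamma})\bigr]ds$; (b) $\int_{n\gamma}^t\bigl[\vec{\Delta}(\nabla U)(\bar{x}_s) - \vec{\Delta}(\nabla U)_{\gamma}(\bar{x}_{n\gamma})\bigr]ds$; (c) $\sqrt{2}\int_{n\gamma}^t\bigl[\nabla^2 U(\bar{x}_s) - \nabla^2 U_{\gamma}(\bar{x}_{n\gamma})\bigr]dw_s$; (d) $-\int_{n\gamma}^t \nabla^2 U(\bar{x}_s)\bigl[\nabla U_{1,\gamma}(s,\bar{x}_{n\gamma}) + \nabla U_{2,\gamma}(s,\bar{x}_{n\gamma})\bigr]ds$.

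Each of (a)--(c) is handled by a telescoping insert-and-subtract at $\bar{x}_{n\gamma}$: the integrand is rewritten as a \emph{time-discretisation error} of the form $[g(\bar{x}_s)-g(\bar{x}_{n\gamma})]\cdot(\text{polynomial})$ plus one or two \emph{taming errors} of the form $g(\bar{x}_{n\gamma}) - g_{\gamma}(\bar{x}_{n\gamma})$. For (a) the decomposition uses the identity $\nabla^2 U(\bar{x}_{n\gamma})\nabla U(\bar{x}_{n\gamma}) = (\nabla^2 U\nabla U)(\bar{x}_{n\gamma})$ to cancel the leading order, producing two taming errors ($(\nabla^2 U\nabla U) - (\nabla^2 U\nabla U)_{\gamma}$ and $\nabla^2 U\cdot[\nabla U - \nabla U_{\gamma}]$ evaluated at $\bar{x}_{n\gamma}$) plus the discretisation error $[\nabla^2 U(\bar{x}_{n\gamma}) - \nabla^2 U(\bar{x}_s)]\nabla U_{\gamma}(\bar{x}_{n\gamma})$. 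Time-discretisation errors are controlled using the polynomial Lipschitz bounds of Remark \ref{poly} together with Lemma \ref{rate2} and the absorption $|y|^q \leq C_q V_c(y)$; after Cauchy--Schwarz (for the Riemann integrals) or It\^o's isometry (for the stochastic one), these contributions are at most $C\gamma^3 V_c(\bar{x}_{n\gamma})$. Taming errors are controlled via the explicit form \eqref{deftamedcoeff} combined with the elementary inequality $|1 - (1+u)^{-\alpha}| \leq \alpha u$ for $u\geq 0$: for instance $|\nabla U(x) - \nabla U_{\gamma}(x)| \leq C\gamma^{3/2}|\nabla U(x)|^{5/2}$ and $|(\nabla^2 U\nabla U)(x) - (\nabla^2 U\nabla U)_{\gamma}(x)| \leq \gamma |x||\nabla^2 U(x)|^2|\nabla U(x)|^2$, with analogous bounds for $\nabla^2 U_{\gamma}$ and $\vec{\Delta}(\nabla U)_{\gamma}$; since each of these is polynomial in $|x|$ by Remark \ref{poly}, it is absorbed into $V_c(\bar{x}_{n\gamma})$ and contributes at most $C\gamma^3 V_c$.

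For (d), Lemma \ref{rate1} supplies $\mathbb{E}^{\mathscr{F}_{n\gamma}}|\nabla U_{1,\gamma}(s,\bar{x}_{n\gamma})|^2 \leq C\gamma^2 V_c(\bar{x}_{n\gamma})$ and $\mathbb{E}^{\mathscr{F}_{n\gamma}}|\nabla U_{2,\gamma}(s,\bar{x}_{n\gamma})|^2 \leq C\gamma V_c(\bar{x}_{n\gamma})$; combining these with Cauchy--Schwarz against the polynomial bound on $|\nabla^2 U(\bar{x}_s)|$ from Remark \ref{poly} and Lemma \ref{rate2} (with a sufficiently large exponent $p$) yields integrand $L^2$-squared bounds of order $\gamma^2 V_c$ and $\gamma V_c$ respectively, so that a further Cauchy--Schwarz in $s\in[n\gamma,t]$ gives a total contribution of order $\gamma^3 V_c(\bar{x}_{n\gamma})$.

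The main obstacle will be the careful bookkeeping of the telescoping decomposition and verifying that each residual piece falls into one of the two classes above while every polynomial factor in $|x|$ is absorbed uniformly into $V_c(\bar{x}_{n\gamma})$. Once this is in place, summing the four groups of contributions and using $\gamma\in(0,1)$ to dominate the higher powers of $\gamma$ yields the claimed bound $C\gamma^2 V_c(\bar{x}_{n\gamma})$.
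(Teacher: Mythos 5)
Your proposal is correct and follows essentially the same route as the paper: apply It\^o's formula to $\nabla U$ along \eqref{eqnintp}, then insert-and-subtract at $\bar{x}_{n\gamma}$ so that the residual splits into time-discretisation errors (handled via Remark~\ref{poly}, Lemma~\ref{rate2}, Proposition~\ref{momentbound}, and Cauchy--Schwarz or It\^o isometry), taming errors at the grid point (handled via the explicit forms \eqref{deftamedcoeff} and $1-(1+u)^{-\alpha}\le \alpha u$), and the $\nabla U_{1,\gamma}+\nabla U_{2,\gamma}$ contribution (handled via Lemma~\ref{rate1}). The paper organises exactly this decomposition into five terms $G_1,\dots,G_5$ in \eqref{rateeqn3}, with $G_5$ collecting the taming errors you describe and the dominant $C\gamma^{2+\beta}$-contribution coming from the H\"older term $G_4$, but the structure and estimates coincide with yours.
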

\begin{proof}
For any $t \in [n\gamma, (n+1)\gamma)$, applying It\^o's formula to $\nabla U(\bar{x}_t)$ gives, almost surely
\begin{align*}
&\nabla U(\bar{x}_t)- \nabla U(\bar{x}_{n\gamma}) \\
									&= -\int_{n\gamma}^t \left(\nabla^2U(\bar{x}_r)\nabla\tilde{ U}_{\gamma}(r,\bar{x}_{n\gamma})-\vec{\Delta}(\nabla U)(\bar{x}_r)\right)\,dr +\sqrt{2}\int_{n\gamma}^t\nabla^2 U(\bar{x}_r)\,dw_r\\
									&=- \int_{n\gamma}^t\left(\nabla^2 U(\bar{x}_r)-\nabla^2 U(\bar{x}_{n\gamma})\right)\nabla U_{\gamma}(\bar{x}_{n\gamma})\,dr-\int_{n\gamma}^t \nabla^2U(\bar{x}_{n\gamma})\nabla U_{\gamma}(\bar{x}_{n\gamma})\,dr\\
									& \hspace{1em} -\int_{n\gamma}^t\nabla^2U(\bar{x}_r)(\nabla U_{1,\gamma}(r,\bar{x}_{n\gamma})+\nabla U_{2,\gamma}(r,\bar{x}_{n\gamma}))\,dr\\
									& \hspace{1em} +\sqrt{2}\int_{n\gamma}^t\left(\nabla^2 U(\bar{x}_r)-\nabla^2 U(\bar{x}_{n\gamma})\right)\,dw_r+\sqrt{2}\int_{n\gamma}^t\nabla^2 U(\bar{x}_{n\gamma})\,dw_r\\
									& \hspace{1em} +\int_{n\gamma}^t\left(\vec{\Delta}(\nabla U)(\bar{x}_r)-\vec{\Delta}(\nabla U)(\bar{x}_{n\gamma})\right)\,dr +\int_{n\gamma}^t\vec{\Delta}(\nabla U)(\bar{x}_{n\gamma})\,dr.
\end{align*}
By substracting $\nabla U_{1,\gamma}(t,\bar{x}_{n\gamma})$, $\nabla U_{2,\gamma}(t,\bar{x}_{n\gamma})$, squaring both sides and taking conditional expectation yields, 
\begin{equation}\label{rateeqn3}
 \mathbb{E}^{\mathscr{F}_{n\gamma}}\left[ \left| \nabla U(\bar{x}_t)- \nabla U(\bar{x}_{n\gamma})- \nabla U_{1,\gamma}(t,\bar{x}_{n\gamma})-\nabla U_{2,\gamma}(t,\bar{x}_{n\gamma})\right|^2\right] \leq C\sum_{i=1}^5 G_i(t).
\end{equation}
where
\begin{align*}
G_1(t)	& =  \mathbb{E}^{\mathscr{F}_{n\gamma}}\left[\left|-\int_{n\gamma}^t\left(\nabla^2 U(\bar{x}_r)-\nabla^2 U(\bar{x}_{n\gamma})\right)\nabla U_{\gamma}(\bar{x}_{n\gamma})\,dr\right|^2\right] ,\\
G_2(t)		&= \mathbb{E}^{\mathscr{F}_{n\gamma}}\left[\left|-\int_{n\gamma}^t\nabla^2U(\bar{x}_r)\left(\nabla U_{1,\gamma}(r,\bar{x}_{n\gamma})+\nabla U_{2,\gamma}(r,\bar{x}_{n\gamma})\right)\,dr\right|^2\right],\\
G_3(t)	&= \mathbb{E}^{\mathscr{F}_{n\gamma}}\left[\left|\sqrt{2}\int_{n\gamma}^t\left(\nabla^2 U(\bar{x}_r)-\nabla^2 U(\bar{x}_{n\gamma})\right)\,dw_r\right|^2\right],\\
G_4(t)	&= \mathbb{E}^{\mathscr{F}_{n\gamma}}\left[\left|\int_{n\gamma}^t\left(\vec{\Delta}(\nabla U)(\bar{x}_r)-\vec{\Delta}(\nabla U)(\bar{x}_{n\gamma})\right)\,dr\right|^2\right],\\
G_5(t) 	& = \mathbb{E}^{\mathscr{F}_{n\gamma}}\left[\left(|\nabla^2U(\bar{x}_{n\gamma})||\nabla U(\bar{x}_{n\gamma})|^2\gamma^2+|\bar{x}_{n\gamma}||\nabla^2U(\bar{x}_{n\gamma})|^2|\nabla U(\bar{x}_{n\gamma})|^2\gamma^2\right.\right.\\
		& \hspace{1em} \left.\left.+\gamma^{3/2}|\bar{x}_{n\gamma}||\vec{\Delta}(\nabla U)(\bar{x}_{n\gamma})|^2+\sqrt{2}\gamma|\nabla^2 U(\bar{x}_{n\gamma})|^2|w_t-w_{n\gamma}|\right)^2\right].
\end{align*} 
By using Cauchy-Schwarz inequality, Proposition \ref{momentbound}, Remark \ref{poly} and Lemma \ref{rate2}, one obtains
\begin{align}\label{rateg1} 
\begin{split}
G_1(t)		& \leq \gamma\int_{n\gamma}^t\mathbb{E}^{\mathscr{F}_{n\gamma}}\left[|(\nabla^2 U(\bar{x}_r)-\nabla^2 U(\bar{x}_{n\gamma}))\nabla U_{\gamma}(\bar{x}_{n\gamma})|^2\right]\,dr\\
	& \leq  C\gamma\int_{n\gamma}^t\mathbb{E}^{\mathscr{F}_{n\gamma}}\left[(1+|\bar{x}_r|+|\bar{x}_{n\gamma}|)^{4\rho-4+4\beta}|\bar{x}_r-\bar{x}_{n\gamma}|^2\right]\,dr\\
	& \leq  C\gamma\int_{n\gamma}^t\sqrt{\mathbb{E}^{\mathscr{F}_{n\gamma}}\left[V_c(\bar{x}_r)+V_c(\bar{x}_{n\gamma})\right]\mathbb{E}^{\mathscr{F}_{n\gamma}}\left[|\bar{x}_r-\bar{x}_{n\gamma}|^4\right]}\,dr\\
	& \leq C\gamma^3 V_c(\bar{x}_{n\gamma}).
\end{split}
\end{align}
Similarly, applying Cauchy-Schwarz inequality, Proposition \ref{momentbound} and Remark \ref{poly} yield
\begin{align}\label{rateg2}
\begin{split}
G_2(t)		& \leq \gamma\int_{n\gamma}^t\mathbb{E}^{\mathscr{F}_{n\gamma}}\left[|\nabla^2U(\bar{x}_r)(\nabla U_{1,\gamma}(r,\bar{x}_{n\gamma})+\nabla U_{2,\gamma}(r,\bar{x}_{n\gamma}))|^2\right]\,dr\\
	& \leq  C\gamma\int_{n\gamma}^t\mathbb{E}^{\mathscr{F}_{n\gamma}}\left[(1+|\bar{x}_r|)^{2\rho-2+2\beta}|\nabla U_{1,\gamma}(r,\bar{x}_{n\gamma})+\nabla U_{2,\gamma}(r,\bar{x}_{n\gamma})|^2\right]\,dr\\
	& \leq  C\gamma\int_{n\gamma}^t\sqrt{\mathbb{E}^{\mathscr{F}_{n\gamma}}\left[V_c(\bar{x}_r)\right]\mathbb{E}^{\mathscr{F}_{n\gamma}}\left[|\nabla U_{1,\gamma}(r,\bar{x}_{n\gamma})|^4+|\nabla U_{2,\gamma}(r,\bar{x}_{n\gamma})|^4\right]}\,dr\\
	& \leq C\gamma^3 V_c(\bar{x}_{n\gamma}),
\end{split}
\end{align}
where the last inequality is obtained by applying Lemma \ref{rate1}. Moreover, one obtains 
\begin{align*}
\begin{split}
G_3(t)	& \leq C\int_{n\gamma}^t\mathbb{E}^{\mathscr{F}_{n\gamma}}\left[|\nabla^2 U(\bar{x}_r)-\nabla^2 U(\bar{x}_{n\gamma})|^2\right]\,dr\\
	& \leq  C\int_{n\gamma}^t\mathbb{E}^{\mathscr{F}_{n\gamma}}\left[(1+|\bar{x}_r|+|\bar{x}_{n\gamma}|)^{2\rho-4+2\beta}|\bar{x}_r-\bar{x}_{n\gamma}|^2\right]\,dr\\
	& \leq  C\int_{n\gamma}^t\sqrt{\mathbb{E}^{\mathscr{F}_{n\gamma}}\left[V_c(\bar{x}_r)+V_c(\bar{x}_{n\gamma})\right]\mathbb{E}^{\mathscr{F}_{n\gamma}}\left[|\bar{x}_r-\bar{x}_{n\gamma}|^4\right]}\,dr\\
	& \leq C\gamma^2 V_c(\bar{x}_{n\gamma}).
\end{split}
\end{align*}
Furthermore, using Cauchy-Schwarz inequality, Proposition \ref{momentbound}, Lemma \ref{rate2} and \ref{h3} yield
\begin{align}\label{rateg4}
\begin{split}
G_4(t)	& \leq \gamma\int_{n\gamma}^t\mathbb{E}^{\mathscr{F}_{n\gamma}}\left[\left|\vec{\Delta}(\nabla U)(\bar{x}_r)-\vec{\Delta}(\nabla U)(\bar{x}_{n\gamma})\right|^2\right]\,dr\\
	& \leq  C\gamma\int_{n\gamma}^t\mathbb{E}^{\mathscr{F}_{n\gamma}}\left[(1+|\bar{x}_r|+|\bar{x}_{n\gamma}|)^{2\rho-4}|\bar{x}_r-\bar{x}_{n\gamma}|^{2\beta}\right]\,dr\\
	& \leq  C\gamma\int_{n\gamma}^t\sqrt{\mathbb{E}^{\mathscr{F}_{n\gamma}}\left[V_c(\bar{x}_r)+V_c(\bar{x}_{n\gamma})\right]\mathbb{E}^{\mathscr{F}_{n\gamma}}\left[|\bar{x}_r-\bar{x}_{n\gamma}|^{4\beta}\right]}\,dr\\
	& \leq C\gamma^{2+\beta} V_c(\bar{x}_{n\gamma}).
\end{split}
\end{align}
The estimate of $G_5(t)$ can be obtained by straightforwad calculations, which implies $G_5(t) \leq C\gamma^3 V_c(\bar{x}_{n\gamma})$. Therefore, 
\[
 \mathbb{E}^{\mathscr{F}_{n\gamma}}\left[ |\nabla U(\bar{x}_t)- \nabla U(\bar{x}_{n\gamma})- \nabla U_{1,\gamma}(t,\bar{x}_{n\gamma})-\nabla U_{2,\gamma}(t,\bar{x}_{n\gamma})|^2\right] \leq C\gamma^2 V_c(\bar{x}_{n\gamma}).
\]
\end{proof}
For any $x, \bar{x} \in \mathbb{R}^d$, denote by $M(x, \bar{x})$ a matrix whose $(i,j)$-th entry is $\sum_{k=1}^d \frac{\partial^3 U(\bar{x})}{\partial x^{(i)} \partial x^{(j)}\partial x^{(k)}}(x^{(k)}-\bar{x}^{(k)})$. One then obtains the following results.
\begin{lemma}\label{mvt} Assume \ref{h3} holds. Then, there exists a constant \(C>0\) such that for any \(x, \bar{x} \in \mathbb{R}^d\), and \(i = 1, \dots,d\), 
\[
\left|\nabla^2 U(x)-\nabla^2 U(\bar{x}) - M(x,\bar{x}) \right|\leq  \sqrt{d}L(1+|x|+|\bar{x}|)^{\rho-2 }|x-\bar{x}|^{1+\beta}.
\] 
\end{lemma}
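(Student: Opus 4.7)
The plan is to interpret the quantity $\nabla^2 U(x) - \nabla^2 U(\bar x) - M(x,\bar x)$ row by row as the standard Taylor remainder of $\nabla g_i := \nabla (\nabla U)^{(i)}$ at the base point $\bar x$, and then bound the remainder via a Hölder estimate on $\nabla^2 g_i$ coming from H3. Specifically, setting $g_i = (\nabla U)^{(i)}$, the $i$-th row of $\nabla^2 U(x)$ is $\nabla g_i(x)^{\mathsf T}$, and the $i$-th row of $M(x,\bar x)$ is $[\nabla^2 g_i(\bar x)(x-\bar x)]^{\mathsf T}$. Hence the $i$-th row of the matrix in question is exactly
\[
r_i(x,\bar x) = \nabla g_i(x) - \nabla g_i(\bar x) - \nabla^2 g_i(\bar x)(x-\bar x).
\]

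Next I would use the fundamental theorem of calculus to write $r_i(x,\bar x)$ as an integral remainder. With $\varphi_s = \bar x + s(x-\bar x)$ for $s \in [0,1]$,
\[
r_i(x,\bar x) \;=\; \int_0^1 \bigl(\nabla^2 g_i(\varphi_s) - \nabla^2 g_i(\bar x)\bigr)(x-\bar x)\, ds.
\]
Taking norms and invoking H3 directly on $\nabla^2 g_i = \nabla^2 (\nabla U)^{(i)}$ gives $|\nabla^2 g_i(\varphi_s) - \nabla^2 g_i(\bar x)| \le L(1+|\varphi_s|+|\bar x|)^{\rho-2} \cdot s^{\beta}|x-\bar x|^{\beta}$. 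Since $|\varphi_s| \le (1-s)|\bar x| + s|x| \le |x|+|\bar x|$ and $\rho - 2 \ge 0$, one has $(1+|\varphi_s|+|\bar x|)^{\rho-2} \le 2^{\rho-2}(1+|x|+|\bar x|)^{\rho-2}$. Combining these and integrating $\int_0^1 s^{\beta}\,ds = 1/(1+\beta)$ yields
\[
|r_i(x,\bar x)| \;\le\; \frac{2^{\rho-2}L}{1+\beta}(1+|x|+|\bar x|)^{\rho-2}|x-\bar x|^{1+\beta}.
\]

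Finally, I would assemble the row-wise bounds into a matrix-norm bound. Using the Frobenius norm (which dominates the spectral norm),
\[
|\nabla^2 U(x) - \nabla^2 U(\bar x) - M(x,\bar x)|_{\mathsf F}^2 = \sum_{i=1}^d |r_i(x,\bar x)|^2,
\]
so summing the row estimate introduces exactly a factor $\sqrt{d}$, giving the desired bound (with the prefactor $2^{\rho-2}/(1+\beta)$ absorbed into $C$, as in Remark~\ref{poly} and the analogous $d^{3/2}L$ bound stated there for $\vec\Delta(\nabla U)$). There is no real obstacle: the argument is just a second-order Taylor expansion applied component-wise to $\nabla U$, with the Hölder regularity of the third derivative used to control the remainder; the only thing requiring care is tracking that each $i$-th row of $M(x,\bar x)$ corresponds precisely to $\nabla^2 g_i(\bar x)(x-\bar x)$ so that the Taylor remainder formula can be applied cleanly.
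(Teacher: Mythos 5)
Your proof is correct and follows essentially the same route as the paper's: both express the residual matrix as an integral-form Taylor remainder via the fundamental theorem of calculus and then invoke \ref{h3} on the third-derivative matrices $\nabla^2(\nabla U)^{(\cdot)}$, with a $\sqrt{d}$ appearing from aggregating $d$ such bounds. The only cosmetic difference is that you split row-wise (over $i$) and pass through the Frobenius norm, whereas the paper splits over the third-derivative direction (the index $k$) and applies Cauchy--Schwarz; by symmetry of the third derivatives these are interchangeable, and your bookkeeping of the $2^{\rho-2}/(1+\beta)$ prefactor is in fact slightly more careful than the paper's.
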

\begin{proof} 
Denote by $g(t) = \nabla^2 U(tx+(1-t)\bar{x})$, for any $x, \bar{x} \in \mathbb{R}^d$ and $t \in [0,1]$. One notes that for any $i,j = 1, \dots,d$,
\begin{align*}
& \nabla^2 U^{(i,j)}(x) -  \nabla^2 U^{(i,j)}(\bar{x}) -  M^{(i,j)}(x,\bar{x})\\
&= \int_0^1 \nabla ( \nabla^2 U^{(i,j)})(tx+(1-t)\bar{x})(x- \bar{x})\, dt - \nabla ( \nabla^2 U^{(i,j)})( \bar{x})(x- \bar{x})
\end{align*}
One obtains that by Cauchy-Schwarz inequality and \ref{h3}
\begin{align*}
&\left|\nabla^2 U(x)-\nabla^2 U(\bar{x}) - M(x,\bar{x}) \right| \\
&\leq \int_0^1\left|\sum_{k=1}^d \left(\nabla^2 (\nabla U)^{(k)}(tx+(1-t)\bar{x})-\nabla^2 (\nabla U)^{(k)}(\bar{x})\right)(x^{(k)}-\bar{x}^{(k)})\right|\,dt\\
& \leq  \int_0^1\left|\left(\sum_{k=1}^d \left|\nabla^2 (\nabla U)^{(k)}(tx+(1-t)\bar{x})-\nabla^2 (\nabla U)^{(k)}(\bar{x})\right|^2\right)^{1/2}\right|\,dt|x-\bar{x}|\\
&\leq \sqrt{d}L(1+|x|+|\bar{x}|)^{\rho -2}|x-\bar{x}|^{1+\beta}.
\end{align*} 
\end{proof}
\begin{lemma}\label{rateM}
Assume \ref{h1} and \ref{h3} are satisfied. Then, there exists a constant $C>0$ such that for all $\gamma \in (0,1)$, $n \in \mathbb{N}$, and $t \in [n\gamma, (n+1)\gamma)$,
\[
\mathbb{E}^{\mathscr{F}_{n\gamma}}\left[\left|\int_{n\gamma}^{t}M(\bar{x}_r,\bar{x}_{n\gamma})\,dw_r\right|^2\right] \leq C\gamma^2V_c(\bar{x}_{n\gamma}).
\] 
\end{lemma}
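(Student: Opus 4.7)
The integrand $M(\bar{x}_r,\bar{x}_{n\gamma})$ is a $d\times d$ matrix-valued, $\mathscr{F}_r$-adapted process, so the stochastic integral $\int_{n\gamma}^{t} M(\bar{x}_r,\bar{x}_{n\gamma})\,dw_r$ is a well-defined $\mathbb{R}^d$-valued Itô integral. My plan is to combine the (conditional) Itô isometry with a pointwise bound on $|M|_{\mathsf{F}}^2$ derived from the polynomial growth of $\nabla^3 U$, and then invoke Lemma \ref{rate2} to control the remaining moment of $|\bar{x}_r-\bar{x}_{n\gamma}|$.

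First, by the conditional Itô isometry applied componentwise,
\[
\mathbb{E}^{\mathscr{F}_{n\gamma}}\!\left[\left|\int_{n\gamma}^{t}M(\bar{x}_r,\bar{x}_{n\gamma})\,dw_r\right|^2\right]
=\int_{n\gamma}^{t}\mathbb{E}^{\mathscr{F}_{n\gamma}}\!\left[|M(\bar{x}_r,\bar{x}_{n\gamma})|_{\mathsf{F}}^{2}\right]dr.
\]
Next, by the definition of $M$ and Cauchy--Schwarz, the $(i,j)$-entry satisfies
\[
|M^{(i,j)}(\bar{x}_r,\bar{x}_{n\gamma})|\leq \bigl|\nabla(\nabla^2U^{(i,j)})(\bar{x}_{n\gamma})\bigr|\,|\bar{x}_r-\bar{x}_{n\gamma}|,
\]
so that summing over $i,j$ and applying the polynomial growth bound from Remark \ref{poly} (namely $|\nabla^2(\nabla U)^{(k)}(x)|\leq K(1+|x|)^{\rho-2+\beta}$) yields
\[
|M(\bar{x}_r,\bar{x}_{n\gamma})|_{\mathsf{F}}^{2}\leq C(1+|\bar{x}_{n\gamma}|)^{2\rho-4+2\beta}\,|\bar{x}_r-\bar{x}_{n\gamma}|^{2}.
\]

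Since $(1+|\bar{x}_{n\gamma}|)^{2\rho-4+2\beta}$ is $\mathscr{F}_{n\gamma}$-measurable, I would apply conditional Cauchy--Schwarz to separate the two factors in expectation:
\[
\mathbb{E}^{\mathscr{F}_{n\gamma}}\!\left[|M(\bar{x}_r,\bar{x}_{n\gamma})|_{\mathsf{F}}^{2}\right]
\leq C\sqrt{(1+|\bar{x}_{n\gamma}|)^{4\rho-8+4\beta}}\,\sqrt{\mathbb{E}^{\mathscr{F}_{n\gamma}}\!\left[|\bar{x}_r-\bar{x}_{n\gamma}|^{4}\right]}.
\]
The first square root is a polynomial in $|\bar{x}_{n\gamma}|$, hence bounded by $CV_c(\bar{x}_{n\gamma})^{1/2}$ (polynomials are dominated by $V_c$ up to a constant), while Lemma \ref{rate2} with $p=4$ gives $\mathbb{E}^{\mathscr{F}_{n\gamma}}[|\bar{x}_r-\bar{x}_{n\gamma}|^{4}]\leq C\gamma^{2}V_c(\bar{x}_{n\gamma})$. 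Taking the product and the square root keeps the Lyapunov function to the first power and produces one factor of $\gamma$, yielding
\[
\mathbb{E}^{\mathscr{F}_{n\gamma}}\!\left[|M(\bar{x}_r,\bar{x}_{n\gamma})|_{\mathsf{F}}^{2}\right]\leq C\gamma V_c(\bar{x}_{n\gamma}).
\]

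Plugging this back into the Itô isometry and integrating over $r\in[n\gamma,t]\subset[n\gamma,(n+1)\gamma)$ contributes an additional factor of $\gamma$, giving the claimed $C\gamma^{2}V_c(\bar{x}_{n\gamma})$ bound. I do not anticipate a genuine obstacle; the only subtlety is the bookkeeping to keep the Lyapunov function at the first power rather than squared, which is handled precisely by the Cauchy--Schwarz split above (mirroring the trick used, e.g., in the estimate of $G_1(t)$ in Lemma \ref{rate4terms}).
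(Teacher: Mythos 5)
Your proof is correct and takes essentially the same route as the paper: conditional Itô isometry, the pointwise bound $|M(\bar{x}_r,\bar{x}_{n\gamma})|_{\mathsf{F}}^2\leq C(1+|\bar{x}_{n\gamma}|)^{2(\rho-2+\beta)}|\bar{x}_r-\bar{x}_{n\gamma}|^2$ obtained from Remark \ref{poly}, and then Lemma \ref{rate2} to control the conditional moment of the increment. The only minor deviation is that you interpose a conditional Cauchy--Schwarz (so you invoke Lemma \ref{rate2} with $p=4$) in order to keep the Lyapunov factor at the first power, whereas the paper pulls the $\mathscr{F}_{n\gamma}$-measurable polynomial factor out directly and applies Lemma \ref{rate2} with $p=2$; both work, and your version is arguably a touch more careful about the bookkeeping of $V_c$.
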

\begin{proof} By using conditional It\^o's isometry and Lemma \ref{rate2}, one obtains,
\begin{align*} 
&\mathbb{E}^{\mathscr{F}_{n\gamma}}\left[\left|\int_{n\gamma}^{t}M(\bar{x}_r,\bar{x}_{n\gamma})\,dw_r\right|^2\right] \\
 &\leq C\mathbb{E}^{\mathscr{F}_{n\gamma}}\left[\int_{n\gamma}^{t}\left|M(\bar{x}_r,\bar{x}_{n\gamma})\right|^2\,dr\right] \\
& =C\mathbb{E}^{\mathscr{F}_{n\gamma}}\left[\int_{n\gamma}^{t}\left( \sum_{i,j=1}^d \left|\sum_{k=1}^d \frac{\partial^3 U(\bar{x}_{n\gamma})}{\partial x^{(i)} \partial x^{(j)}\partial x^{(k)}}(\bar{x}_r^{(k)}-\bar{x}_{n\gamma}^{(k)})\right|^2\right) \,dr\right] \\
& \leq C\int_{n\gamma}^t \mathbb{E}^{\mathscr{F}_{n\gamma}}\left[ (1+|\bar{x}_{n\gamma}|)^{2(\rho-2+\beta)}|\bar{x}_r-\bar{x}_{n\gamma}|^2\right] \,dr\\
& \leq C\gamma^2 V_c(\bar{x}_{n\gamma}).
\end{align*}
\end{proof}
\begin{lemma}\label{problematicterm}
Assume \ref{h1} and \ref{h3} are satisfied. Then, there exists a constant $C>0$ such that for all $\gamma \in (0,1)$, $n \in \mathbb{N}$, and $t \in [n\gamma, (n+1)\gamma)$,
\begin{align*}
&\mathbb{E}^{\mathscr{F}_{n\gamma}}\left[\int_{n\gamma}^t(\nabla U(x_r)-\nabla U(\bar{x}_r))\,dr\int_{n\gamma}^tM(\bar{x}_r,\bar{x}_{n\gamma}) \,dw_r\right]\\
&\leq C\gamma^3(V_c(\bar{x}_{n\gamma})+V_c(x_{n\gamma})).
\end{align*}
\end{lemma}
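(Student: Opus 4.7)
The plan is to apply Cauchy--Schwarz to split the product of the Riemann integral $I_1 := \int_{n\gamma}^{t}(\nabla U(x_r)-\nabla U(\bar{x}_r))\,dr$ and the It\^o integral $I_2 := \int_{n\gamma}^{t}M(\bar{x}_r,\bar{x}_{n\gamma})\,dw_r$ into two $L^2$ factors, and to bound each factor using the preceding lemmas. Concretely,
\[
\left|\mathbb{E}^{\mathscr{F}_{n\gamma}}\left[I_1 \cdot I_2\right]\right| \le \sqrt{\mathbb{E}^{\mathscr{F}_{n\gamma}}|I_1|^2}\,\sqrt{\mathbb{E}^{\mathscr{F}_{n\gamma}}|I_2|^2}.
\]
For the It\^o factor, Lemma \ref{rateM} immediately yields $\mathbb{E}^{\mathscr{F}_{n\gamma}}|I_2|^2 \le C\gamma^2 V_c(\bar{x}_{n\gamma})$, so the task reduces to obtaining an $O(\gamma^4)$ bound on $\mathbb{E}^{\mathscr{F}_{n\gamma}}|I_1|^2$.

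For the Riemann factor, Jensen's inequality gives $\mathbb{E}^{\mathscr{F}_{n\gamma}}|I_1|^2 \le \gamma\int_{n\gamma}^{t}\mathbb{E}^{\mathscr{F}_{n\gamma}}|\nabla U(x_r)-\nabla U(\bar{x}_r)|^2\,dr$. I would then invoke the local Lipschitz estimate of Remark \ref{poly} to write
\[
|\nabla U(x_r)-\nabla U(\bar{x}_r)|^2 \le C(1+|x_r|+|\bar{x}_r|)^{2(\rho-1+\beta)}|x_r-\bar{x}_r|^2,
\]
and apply conditional Cauchy--Schwarz to separate the polynomial growth factor from $|x_r-\bar{x}_r|^4$. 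Using polynomial-exponential comparison (as in the proof of Lemma \ref{rate1}) together with Propositions \ref{sdemomentbound} and \ref{momentbound}, the polynomial factor is bounded in $L^2$ by a constant multiple of $\sqrt{V_c(x_{n\gamma})+V_c(\bar{x}_{n\gamma})}$.

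The crucial ingredient is the sharp fourth-moment estimate $\mathbb{E}^{\mathscr{F}_{n\gamma}}|x_r-\bar{x}_r|^4 \le C\gamma^4(V_c(x_{n\gamma})+V_c(\bar{x}_{n\gamma}))$. Because $x_t$ and $\bar{x}_t$ are driven by the common Brownian motion $w_t$ with identical diffusion coefficient $\sqrt{2}$, the stochastic parts cancel and
\[
x_r-\bar{x}_r = (x_{n\gamma}-\bar{x}_{n\gamma}) - \int_{n\gamma}^{r}\bigl[\nabla U(x_u)-\nabla\tilde{U}_\gamma(u,\bar{x}_{n\gamma})\bigr]\,du.
\]
In the one-step local analysis used inside the proof of Theorem \ref{thmwasserstein}, with $x_{n\gamma}=\bar{x}_{n\gamma}$, the right-hand side is purely a drift integral of length $\gamma$, whose $L^4$-norm is $O(\gamma)$ by Remarks \ref{poly} and \ref{tametermbd} together with Propositions \ref{sdemomentbound} and \ref{momentbound}. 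Substituting back gives $\mathbb{E}^{\mathscr{F}_{n\gamma}}|I_1|^2 \le C\gamma^4(V_c(x_{n\gamma})+V_c(\bar{x}_{n\gamma}))$, and combining the two $L^2$ bounds via Cauchy--Schwarz yields the claimed estimate $C\gamma^3(V_c(x_{n\gamma})+V_c(\bar{x}_{n\gamma}))$.

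The main obstacle is exactly the sharp $\gamma^4$ rate for the gap $\mathbb{E}|x_r-\bar{x}_r|^4$. Naively applying the individual-process increment bound of Lemma \ref{rate2} to each of $x_t$ and $\bar{x}_t$ would only yield $\gamma^2$, which would degrade the final estimate to $\gamma^{5/2}$ and miss the target $\gamma^3$. The extra factor of $\gamma$ comes from the cancellation of the two martingale contributions under the synchronous coupling, so that only a drift integral of order $\gamma$ survives; this cancellation is the key structural point of the argument, and the rest of the proof is a routine assembly of the bounds above.
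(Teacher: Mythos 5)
Your proposal hinges on the claimed bound $\mathbb{E}^{\mathscr{F}_{n\gamma}}|x_r-\bar{x}_r|^4 \le C\gamma^4(V_c(x_{n\gamma})+V_c(\bar{x}_{n\gamma}))$, which you justify only ``with $x_{n\gamma}=\bar{x}_{n\gamma}$''. That assumption is not available: in the synchronous coupling used in the proof of Theorem~\ref{thmwasserstein}, the pair $(x_0,\bar{x}_0)$ is drawn from $\zeta_0=\pi\otimes\delta_x$, so $x_{n\gamma}\neq\bar{x}_{n\gamma}$ in general (the coupling is started once at time $0$, not re-matched at every grid point). The lemma is stated for the processes conditionally on $\mathscr{F}_{n\gamma}$ with arbitrary $\mathscr{F}_{n\gamma}$-measurable starting values, and its right-hand side $C\gamma^3(V_c(\bar{x}_{n\gamma})+V_c(x_{n\gamma}))$ contains no factor $|x_{n\gamma}-\bar{x}_{n\gamma}|$. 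Since
\[
x_r-\bar{x}_r = (x_{n\gamma}-\bar{x}_{n\gamma}) - \int_{n\gamma}^{r}\bigl[\nabla U(x_u)-\nabla\tilde{U}_\gamma(u,\bar{x}_{u})\bigr]\,du,
\]
the leading contribution to $\mathbb{E}^{\mathscr{F}_{n\gamma}}|x_r-\bar{x}_r|^4$ is $|x_{n\gamma}-\bar{x}_{n\gamma}|^4 = O(1)$, not $O(\gamma^4)$. Consequently your Riemann factor only satisfies $\mathbb{E}^{\mathscr{F}_{n\gamma}}|I_1|^2 = O(\gamma^2 |x_{n\gamma}-\bar x_{n\gamma}|^2 V_c(\cdot))$, not $O(\gamma^4)$, and the direct Cauchy--Schwarz split gives merely $O(\gamma^2)$ and an extraneous $|x_{n\gamma}-\bar x_{n\gamma}|^2$; you lose a full power of $\gamma$ and the bound is not even of the claimed form.

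The fix is not to improve $\mathbb{E}|x_r-\bar{x}_r|^4$ but to decompose the drift difference \emph{before} applying Cauchy--Schwarz. Write
\[
\nabla U(x_r)-\nabla U(\bar x_r)=\bigl(\nabla U(x_r)-\nabla U(x_{n\gamma})\bigr)-\bigl(\nabla U(\bar x_r)-\nabla U(\bar x_{n\gamma})\bigr)+\bigl(\nabla U(x_{n\gamma})-\nabla U(\bar x_{n\gamma})\bigr).
\]
The last, $\mathscr{F}_{n\gamma}$-measurable, piece pairs to zero with the It\^o integral $I_2$ since $\mathbb{E}^{\mathscr{F}_{n\gamma}}[I_2]=0$. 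From the remaining centered increments one next subtracts and adds the leading martingale corrections $\sqrt{2}\int_{n\gamma}^r\nabla^2U(x_{n\gamma})\,dw_s$ and $\sqrt{2}\int_{n\gamma}^r\nabla^2U(\bar x_{n\gamma})\,dw_s$. The reinstated cross term $\sqrt{2}\int_{n\gamma}^t\int_{n\gamma}^r(\nabla^2U(x_{n\gamma})-\nabla^2U(\bar x_{n\gamma}))\,dw_s\,dr\cdot I_2$ is bounded directly to order $\gamma^3$ using orthogonality of iterated Wiener integrals (the paper shows the $dw^{(k)}_s\,dw^{(j)}_r$ block has vanishing conditional expectation against the $\int dw^{(l)}_s\,dr$ factor). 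Applying It\^o's formula to the now-centered residual shows it has $L^2$-size $O(\gamma)$ per time slice, whence $\|\int_{n\gamma}^t\text{residual}\,dr\|_2 = O(\gamma^2)$, and a single Cauchy--Schwarz with $\|I_2\|_2=O(\gamma)$ gives $O(\gamma^3)$. The starting-point gap $|x_{n\gamma}-\bar x_{n\gamma}|$ never enters because it was removed at the first step, not estimated. Your bookkeeping for the Riemann factor and your use of Lemma~\ref{rateM} for $I_2$ are both fine; it is this initial decomposition that your argument misses.
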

\begin{proof} 
For any $t \in [n\gamma, (n+1)\gamma)$, one observes that
\begin{align}\label{problematicsp}
\begin{split}
&\mathbb{E}^{\mathscr{F}_{n\gamma}}\left[\int_{n\gamma}^t(\nabla U(x_r)-\nabla U(\bar{x}_r))\,dr\int_{n\gamma}^tM(\bar{x}_r,\bar{x}_{n\gamma}) \,dw_r\right] \\
& = \mathbb{E}^{\mathscr{F}_{n\gamma}}\left[\int_{n\gamma}^t\left\{\nabla U(x_r)-\nabla U(x_{n\gamma})-(\nabla U(\bar{x}_r)-\nabla U(\bar{x}_{n\gamma})) \right. \right.\\
&\quad\left.\left. - \sqrt{2}\int_{n\gamma}^r\nabla^2 U(x_{n\gamma}) \,dw_s  + \sqrt{2}\int_{n\gamma}^r\nabla^2 U(\bar{x}_{n\gamma}) \,dw_s\right\}\,dr \int_{n\gamma}^tM(\bar{x}_r, \bar{x}_{n\gamma})\,dw_r\right]\\
&\hspace{1em} +  \sqrt{2}\mathbb{E}^{\mathscr{F}_{n\gamma}}\left[\int_{n\gamma}^t\int_{n\gamma}^r(\nabla^2 U(x_{n\gamma})- \nabla^2U(\bar{x}_{n\gamma})) \,dw_s\,dr\int_{n\gamma}^tM(\bar{x}_r, \bar{x}_{n\gamma})\,dw_r\right].
\end{split}
\end{align}
The second term in \eqref{problematicsp} can be rewritten as
\begin{align*}
& \sqrt{2} \mathbb{E}^{\mathscr{F}_{n\gamma}}\left[\int_{n\gamma}^t\int_{n\gamma}^r(\nabla^2 U(x_{n\gamma})- \nabla^2U(\bar{x}_{n\gamma})) \,dw_s\,dr\int_{n\gamma}^tM(\bar{x}_r, \bar{x}_{n\gamma})\,dw_r\right]\\
& = \sqrt{2} \mathbb{E}^{\mathscr{F}_{n\gamma}}\left[\sum_{i=1}^d \int_{n\gamma}^t\sum_{l=1}^d\int_{n\gamma}^r(\nabla^2 U^{(i,l)}(x_{n\gamma})- \nabla^2U^{(i,l)}(\bar{x}_{n\gamma})) \,dw^{(l)}_s\,dr \right.\\
& \left.\times \sum_{j=1}^d\int_{n\gamma}^t\sum_{k=1}^d \frac{\partial^3 U(\bar{x}_{n\gamma})}{\partial x^{(i)} \partial x^{(j)}\partial x^{(k)}}\left(-\int_{n\gamma}^r\nabla\tilde{U}^{(k)}_{\gamma}(s,\bar{x}_{n\gamma})\,ds +\sqrt{2}\int_{n\gamma}^r \,dw^{(k)}_s\right)\,dw^{(j)}_r\right]\\
&\leq C \gamma^3 (V_c(x_{n\gamma})+V_c(\bar{x}_{n\gamma})).
\end{align*}
where the last inequality holds due to Cauchy-Schwarz inequality, Lemma \ref{rate1}, Proposition \ref{sdemomentbound}, \ref{momentbound} and the fact that for any $i, l, j, k = 1, \dots, d$
\begin{align*}
 &\mathbb{E}^{\mathscr{F}_{n\gamma}}\left[ \int_{n\gamma}^t\int_{n\gamma}^r(\nabla^2 U^{(i,l)}(x_{n\gamma})- \nabla^2U^{(i,l)}(\bar{x}_{n\gamma})) \,dw^{(l)}_s\,dr \right.\\
 &\hspace{5em}\times \left. \int_{n\gamma}^t \frac{\partial^3 U(\bar{x}_{n\gamma})}{\partial x^{(i)} \partial x^{(j)}\partial x^{(k)}} \int_{n\gamma}^r\sqrt{2} \,dw^{(k)}_s\,dw^{(j)}_r\right]=0.
\end{align*}
Then, to estimate the first term of \eqref{problematicsp}, one applies It\^o's formula to $\nabla U(x_r)$ and $\nabla U(\bar{x}_r)$ to obtain, almost surely
\begin{align}\label{problematicito}
\begin{split}
& \nabla U(x_r)-\nabla U(x_{n\gamma})-(\nabla U(\bar{x}_r)-\nabla U(\bar{x}_{n\gamma}))\\
&\quad- \sqrt{2}\int_{n\gamma}^r\nabla^2 U(x_{n\gamma}) \,dw_s+ \sqrt{2}\int_{n\gamma}^r\nabla^2 U(\bar{x}_{n\gamma}) \,dw_s \\
&=-\int_{n\gamma}^r \left(\nabla^2U(x_s) \nabla U(x_s) - \vec{\Delta}(\nabla U)(x_s)\right)\,ds\\
&\quad+\sqrt{2}\int_{n\gamma}^r (\nabla^2 U(x_s)-\nabla^2 U(x_{n\gamma}))\,dw_s\\
& \quad +\int_{n\gamma}^r \left( \nabla^2 U(\bar{x}_s) \nabla \tilde{U}_{\gamma}(s, \bar{x}_{n\gamma}) - \vec{\Delta}(\nabla U)(\bar{x}_s)\right)\,ds\\
&\quad -\sqrt{2}\int_{n\gamma}^r (\nabla^2 U(\bar{x}_s)-\nabla^2 U(\bar{x}_{n\gamma}))\,dw_s.
\end{split}
\end{align}
By using Cauchy-Schwarz inequality and Lemma \ref{rateM}, equation \eqref{problematicsp} yields
\begin{align*}
& \mathbb{E}^{\mathscr{F}_{n\gamma}}\left[\int_{n\gamma}^t(\nabla U(x_r)-\nabla U(\bar{x}_r))\,dr\int_{n\gamma}^tM(\bar{x}_r,\bar{x}_{n\gamma})\,dw_r\right] \\
& \leq \sqrt{C\gamma^2 V_c(\bar{x}_{n\gamma})} \left(\mathbb{E}^{\mathscr{F}_{n\gamma}}\left[\gamma \int_{n\gamma}^t|\nabla U(x_r)-\nabla U(x_{n\gamma})-(\nabla U(\bar{x}_r)-\nabla U(\bar{x}_{n\gamma}))\right.\right.\\
&\quad \left.\left.- \sqrt{2}\int_{n\gamma}^r\nabla^2 U(x_{n\gamma}) \,dw_s+ \sqrt{2}\int_{n\gamma}^r\nabla^2 U(\bar{x}_{n\gamma})\,dw_s|^2\,dr\right]\right)^{1/2}\\
& \quad + C\gamma^3(V_c(\bar{x}_{n\gamma})+V_c(x_{n\gamma})).
\end{align*}
Then, by taking into consideration \eqref{problematicito}, and by applying Proposition \ref{sdemomentbound} and \ref{momentbound}, one obtains
\begin{align*}
& \mathbb{E}^{\mathscr{F}_{n\gamma}}\left[\int_{n\gamma}^t(\nabla U(x_r)-\nabla U(\bar{x}_r))\,dr\int_{n\gamma}^tM(\bar{x}_r,\bar{x}_{n\gamma})\,dw_r\right] \\
& \leq \sqrt{C\gamma^2 V_c(\bar{x}_{n\gamma})} \\
&\quad \times\left(\mathbb{E}^{\mathscr{F}_{n\gamma}}\left[\gamma^2 \int_{n\gamma}^t \int_{n\gamma}^r \left|\nabla^2U(x_s) \nabla U(x_s) - \vec{\Delta}(\nabla U)(x_s)\right|^2\,ds\,dr\right]\right)^{1/2}\\
&\quad  +\sqrt{C\gamma^2 V_c(\bar{x}_{n\gamma})} \\
&\qquad \times\left(\mathbb{E}^{\mathscr{F}_{n\gamma}}\left[\gamma^2 \int_{n\gamma}^t \int_{n\gamma}^r \left| \nabla^2 U(\bar{x}_s) \nabla \tilde{U}_{\gamma}(s, \bar{x}_{n\gamma}) - \vec{\Delta}(\nabla U)(\bar{x}_s)\right|^2\,ds\,dr\right]\right)^{1/2}\\
& \quad +\sqrt{C\gamma^2 V_c(\bar{x}_{n\gamma})} \left(\gamma \int_{n\gamma}^t \int_{n\gamma}^r\mathbb{E}^{\mathscr{F}_{n\gamma}}\left[ \left|\nabla^2 U(x_s)-\nabla^2 U(x_{n\gamma})\right|^2\right]\,ds\,dr\right)^{1/2}\\
& \quad  +\sqrt{C\gamma^2 V_c(\bar{x}_{n\gamma})} \left(\gamma \int_{n\gamma}^t \int_{n\gamma}^r \mathbb{E}^{\mathscr{F}_{n\gamma}}\left[ \left|\nabla^2 U(\bar{x}_s)-\nabla^2 U(\bar{x}_{n\gamma})\right|^2\right]\,ds\,dr\right)^{1/2}\\
& \quad + C\gamma^3(V_c(\bar{x}_{n\gamma})+V_c(x_{n\gamma})) \\
& \leq \sqrt{C\gamma^2 V_c(\bar{x}_{n\gamma})} \left(\gamma \int_{n\gamma}^t \int_{n\gamma}^r\mathbb{E}^{\mathscr{F}_{n\gamma}}\left[(1+|x_s|+|x_{n\gamma}|)^{2\rho-2}|x_s -x_{n\gamma}|^2\right]\,ds\,dr\right)^{1/2}\\
&  +\sqrt{C\gamma^2 V_c(\bar{x}_{n\gamma})} \left(\gamma \int_{n\gamma}^t \int_{n\gamma}^r\mathbb{E}^{\mathscr{F}_{n\gamma}}\left[ (1+|\bar{x}_s|+|\bar{x}_{n\gamma}|)^{2\rho-2}|\bar{x}_s -\bar{x}_{n\gamma}|^2\right]\,ds\,dr\right)^{1/2}\\
& \quad + C\gamma^3(V_c(\bar{x}_{n\gamma})+V_c(x_{n\gamma})).
\end{align*}
Finally by using Cauchy-Schwarz inequality and Lemma \ref{rate2}, one obtains
\begin{align*}
 &\mathbb{E}^{\mathscr{F}_{n\gamma}}\left[\int_{n\gamma}^t(\nabla U(x_r)-\nabla U(\bar{x}_r))\,dr\int_{n\gamma}^tM(\bar{x}_r,\bar{x}_{n\gamma})\,dw_r\right] \\
 &\quad\leq C\gamma^3(V_c(\bar{x}_{n\gamma})+V_c(x_{n\gamma})).
\end{align*}
\end{proof}
\noindent \textbf{Proof of Theorem \ref{thmwasserstein}.}
For $t>0$, consider the coupling
\begin{equation*}
\begin{cases}
 x_t = x_0-\int_0^t\nabla U(x_r)dr +\sqrt{2}w_t, \\
 \bar{x}_{t} = \bar{x}_0 - \int_0^t \nabla\tilde{ U}_{\gamma}(r,\bar{x}_{\floor{r/\gamma}\gamma}) \, dr+ \sqrt{2}w_t,
\end{cases}
\end{equation*}
where $ - \nabla\tilde{ U}_{\gamma}(r,\bar{x}_{\floor{r/\gamma}\gamma})$ is defined in \eqref{eqnintp}. Let $(x_0, \bar{x}_0)$ be distributed according to $\zeta_0$, where $\zeta_0 = \pi \otimes \delta_x$ for all $x \in \mathbb{R}^d$. Define $e_t =x_t-\bar{x}_t$, for all $t \in [n\gamma, (n+1)\gamma)$, $n \in \mathbb{N}$. By It\^o's formula, one obtains, almost surely,
\begin{align*}
|e_t|^2	& = |e_{n\gamma}|^2 -2\int_{n\gamma}^{t} e_s( \nabla U(x_s)-\nabla\tilde{ U}_{\gamma}(s,\bar{x}_{n\gamma}))\,ds.
\end{align*}
Then, taking the expectation and taking the derivative on both sides yield
\begin{align*}
\frac{d}{dt} \mathbb{E}\left[|e_t|^2\right] 	& = -2 \mathbb{E}\left[e_t( \nabla U(x_t)-\nabla\tilde{ U}_{\gamma}(t,\bar{x}_{n\gamma}))\right]\\
														& = 2\mathbb{E}\left[e_t(- (\nabla U(x_t)- \nabla U(\bar{x}_t)))\right]\\
														& \hspace{1em} +2\mathbb{E}\left[e_t(-( \nabla U(\bar{x}_t)- \nabla U(\bar{x}_{n\gamma})- \nabla U_{1,\gamma}(t,\bar{x}_{n\gamma})-\nabla U_{2,\gamma}(t,\bar{x}_{n\gamma})))\right]\\
														& \hspace{1em} +2\mathbb{E}\left[e_t(-(\nabla U(\bar{x}_{n\gamma})-\nabla U_{\gamma}(\bar{x}_{n\gamma})))\right],
\end{align*}
which implies by using \ref{h4} and $|a||b|\leq \varepsilon a^2+(4\varepsilon)^{-1}b^2$, $\varepsilon >0$,
\begin{align}\label{rateeqn1}
\begin{split}
\frac{d}{dt} \mathbb{E}\left[|e_t|^2\right] 	& \leq (2\varepsilon)^{-1}\gamma^3\mathbb{E}\left[|\nabla U(\bar{x}_{n\gamma})|^5\right] -2(m-\varepsilon)\mathbb{E}\left[|e_t|^2\right] \\
														&  +2\mathbb{E}\left[e_t(-( \nabla U(\bar{x}_t)- \nabla U(\bar{x}_{n\gamma})- \nabla U_{1,\gamma}(t,\bar{x}_{n\gamma})-\nabla U_{2,\gamma}(t,\bar{x}_{n\gamma})))\right].
\end{split}
\end{align}
By applying It\^o's formula to $\nabla U(\bar{x}_t)$, and by calculating $\nabla U(\bar{x}_t)- \nabla U(\bar{x}_{n\gamma})- \nabla U_{1,\gamma}(t,\bar{x}_{n\gamma})-\nabla U_{2,\gamma}(t,\bar{x}_{n\gamma})$, one obtains \eqref{rateeqn3}. Substituing \eqref{rateeqn3} into \eqref{rateeqn1} gives
\begin{align*}
&\frac{d}{dt} \mathbb{E}\left[|e_t|^2\right] \\
														& \leq (2\varepsilon)^{-1}\gamma^3\mathbb{E}\left[|\nabla U(\bar{x}_{n\gamma})|^5\right]  -2(m-\varepsilon)\mathbb{E}\left[|e_t|^2\right] \\
														&  \hspace{1em} +2\mathbb{E}\left[|e_t||\int_{n\gamma}^t(\nabla^2 U(\bar{x}_r)-\nabla^2 U(\bar{x}_{n\gamma}))\nabla U_{\gamma}(\bar{x}_{n\gamma})\,dr|\right]\\
														&  \hspace{1em} +2\mathbb{E}\left[|e_t||\int_{n\gamma}^t\nabla^2U(\bar{x}_r)(\nabla U_{1,\gamma}(r,\bar{x}_{n\gamma})+\nabla U_{2,\gamma}(r,\bar{x}_{n\gamma}))\,dr|\right]\\
														&  \hspace{1em} +2\sqrt{2}\mathbb{E}\left[e_t\left(-\int_{n\gamma}^t(\nabla^2 U(\bar{x}_r)-\nabla^2 U(\bar{x}_{n\gamma}))\,dw_r\right)\right]\\
														&  \hspace{1em} +2\mathbb{E}\left[|e_t||\int_{n\gamma}^t( \vec{\Delta}(\nabla U)(\bar{x}_r)- \vec{\Delta}(\nabla U)(\bar{x}_{n\gamma}))\,dr|\right]\\
														&  \hspace{1em} +2\mathbb{E}\left[|e_t|(|\nabla^2U(\bar{x}_{n\gamma})||\nabla U(\bar{x}_{n\gamma})|^2\gamma^2+|\bar{x}_{n\gamma}||\nabla^2U(\bar{x}_{n\gamma})|^2|\nabla U(\bar{x}_{n\gamma})|^2\gamma^2\right.\\
														&\hspace{6em}\left.+\gamma^{3/2}|\bar{x}_{n\gamma}||\vec{\Delta}(\nabla U)(\bar{x}_{n\gamma})|^2+\sqrt{2}\gamma|\nabla^2 U(\bar{x}_{n\gamma})|^2(w_t-w_{n\gamma}))\right].
\end{align*}
By Young's inequality and Cauchy-Schwarz inequality,
\begin{align}\label{eqn:rate}
\frac{d}{dt} \mathbb{E}\left[|e_t|^2\right] 
														& \leq J_1(t)+J_2(t), 
\end{align}
where
\[
J_1(t)	=2\sqrt{2}\mathbb{E}\left[e_t\left(-\int_{n\gamma}^t(\nabla^2 U(\bar{x}_r)-\nabla^2 U(\bar{x}_{n\gamma}))\,dw_r\right)\right],
\]
and
\begin{align*}
&J_2(t) \\
& = (2\varepsilon)^{-1}\gamma^3\mathbb{E}\left[|\nabla U(\bar{x}_{n\gamma})|^5\right]  -2(m-5\varepsilon)\mathbb{E}\left[|e_t|^2\right] \\
														&  \hspace{1em} +(2\varepsilon)^{-1}\gamma\mathbb{E}\left[\int_{n\gamma}^t|(\nabla^2 U(\bar{x}_r)-\nabla^2 U(\bar{x}_{n\gamma}))\nabla U_{\gamma}(\bar{x}_{n\gamma})|^2\,dr\right]\\
														&  \hspace{1em} +(2\varepsilon)^{-1}\gamma\mathbb{E}\left[\int_{n\gamma}^t|\nabla^2U(\bar{x}_r)(\nabla U_{1,\gamma}(r,\bar{x}_{n\gamma})+\nabla U_{2,\gamma}(r,\bar{x}_{n\gamma}))|^2\,dr\right]\\
														&  \hspace{1em} +(2\varepsilon)^{-1}\gamma\mathbb{E}\left[\int_{n\gamma}^t| \vec{\Delta}(\nabla U)(\bar{x}_r)- \vec{\Delta}(\nabla U)(\bar{x}_{n\gamma})|^2\,dr\right]\\
														&  \hspace{1em} +4(2\varepsilon)^{-1}\gamma^3\mathbb{E}\left[|\nabla^2U(\bar{x}_{n\gamma})|^2|\nabla U(\bar{x}_{n\gamma})|^4+|\bar{x}_{n\gamma}|^2|\nabla^2U(\bar{x}_{n\gamma})|^4|\nabla U(\bar{x}_{n\gamma})|^4\right.\\
														&\hspace{15em}\left.+|\bar{x}_{n\gamma}|^2|\vec{\Delta}(\nabla U)(\bar{x}_{n\gamma})|^4+2 |\nabla^2 U(\bar{x}_{n\gamma})|^4\right].
\end{align*}
By taking $\varepsilon = \frac{m}{12}$, and by using the results form \eqref{rateg1} - \eqref{rateg4} in Lemma \ref{rate4terms}, one obtains
\begin{equation}\label{eqn:J2}
J_2(t)	\leq C\gamma^{2+\beta}\mathbb{E}\left[V_c(\bar{x}_{n\gamma})\right] -\frac{7}{6}m\mathbb{E}\left[|e_t|^2\right],
\end{equation}
where $\beta = (0, 1]$. Moreover, one can rewrite $J_1(t)$ as follows
\begin{align*}
\begin{split}
J_1(t)					&=-2\sqrt{2}\mathbb{E}\left[e_t\int_{n\gamma}^t(\nabla^2 U(\bar{x}_r)-\nabla^2 U(\bar{x}_{n\gamma})- M(\bar{x}_r, \bar{x}_{n\gamma}))\,dw_r\right]\\
						& \quad -2\sqrt{2}\mathbb{E}\left[(e_t-e_{n\gamma})\int_{n\gamma}^t  M(\bar{x}_r, \bar{x}_{n\gamma})\,dw_r\right]\\
						&\quad -2\sqrt{2}\mathbb{E}\left[e_{n\gamma}\int_{n\gamma}^t  M(\bar{x}_r, \bar{x}_{n\gamma})\,dw_r\right],
\end{split}
\end{align*}
which implies due to Young's inequality, Lemma \ref{mvt}, \ref{rate2} and the fact that the last term above is zero,
\begin{align*}
\begin{split}
J_1(t)					&\leq 2\varepsilon\mathbb{E}\left[|e_t|^2\right]+C\gamma^{2+\beta} \mathbb{E}\left[V_c(\bar{x}_{n\gamma})\right]\\
						&\hspace{1em} +2\sqrt{2}\mathbb{E}\left[\int_{n\gamma}^t(\nabla U(x_r)-\nabla\tilde{U}_{\gamma}(r,\bar{x}_{n\gamma}))\,dr\int_{n\gamma}^tM(\bar{x}_r, \bar{x}_{n\gamma})\,dw_r\right].
\end{split}
\end{align*}
It can be further rewritten as
\begin{align*}
J_1(t)		&\leq 2\varepsilon\mathbb{E}\left[|e_t|^2\right]+C\gamma^{2+\beta} \mathbb{E}\left[V_c(\bar{x}_{n\gamma})\right]\\
			&\hspace{1em}+2\sqrt{2}\mathbb{E}\left[\int_{n\gamma}^t(\nabla U(x_r)-\nabla U(\bar{x}_r))\,dr\int_{n\gamma}^tM(\bar{x}_r, \bar{x}_{n\gamma})\,dw_r\right]\\
			& \hspace{1em}+ 2\sqrt{2}\mathbb{E}\left[\int_{n\gamma}^t(\nabla U(\bar{x}_r)- \nabla U(\bar{x}_{n\gamma})- \nabla U_{1,\gamma}(r,\bar{x}_{n\gamma})-\nabla U_{2,\gamma}(r,\bar{x}_{n\gamma}))\,dr\right.\\
			&\hspace{8em}\left.\times\int_{n\gamma}^tM(\bar{x}_r, \bar{x}_{n\gamma})\,dw_r\right]\\
			& \hspace{1em}+2\sqrt{2}\mathbb{E}\left[\int_{n\gamma}^t(\nabla U(\bar{x}_{n\gamma})-\nabla U_{\gamma}(\bar{x}_{n\gamma}))\,dr\int_{n\gamma}^tM(\bar{x}_r, \bar{x}_{n\gamma})\,dw_r\right],
\end{align*}
which, by using Cauchy-Schwarz inequality, Remark \ref{poly}, Lemma \ref{problematicterm}, \ref{rate4terms} and \ref{rateM}, yields
\begin{align}\label{eqn:J1}
J_1(t)		\leq 2\varepsilon\mathbb{E}\left[|e_t|^2\right]+C\gamma^{2+\beta}\mathbb{E}\left[V_c(x_{n\gamma})+V_c(\bar{x}_{n\gamma})\right]
\end{align}
Substituting \eqref{eqn:J1} and \eqref{eqn:J2} into \eqref{eqn:rate} with $\varepsilon = \frac{m}{12}$, one obtains the following result,
\begin{align*}
\frac{d}{dt} \mathbb{E}\left[|e_t|^2\right] 	& \leq -m\mathbb{E}\left[|e_t|^2\right]+ C\gamma^{2+\beta}\mathbb{E}\left[V_c(x_{n\gamma})+V_c(\bar{x}_{n\gamma})\right].
\end{align*}
The application of Gronwall's lemma yields
\begin{align*}
 \mathbb{E}\left[|e_t|^2\right] 	 \leq e^{-m(t-n\gamma)}\mathbb{E}\left[|e_{n\gamma}|^2\right]+ C\gamma^{3+\beta}\mathbb{E}\left[V_c(x_{n\gamma})+V_c(\bar{x}_{n\gamma})\right].
\end{align*}
Finally, by induction, Proposition \ref{sdemomentbound} and \ref{momentbound}, one obtains
\begin{align*}
\begin{split}
 \mathbb{E}\left[|e_{(n+1)\gamma}|^2\right]	& \leq e^{-m\gamma (n+1)}\mathbb{E}\left[|e_0|^2\right]\\
 															&\quad +C\gamma^{3+\beta}\sum_{k=0}^n\mathbb{E}\left[V_c(\bar{x}_{k\gamma})+V_c(x_{k\gamma})\right]e^{-m\gamma (n-k)}\\
 															& \leq e^{-m\gamma (n+1)}\mathbb{E}\left[|x_0-\bar{x}_0|^2\right]+\frac{3bC}{7c^2m}e^{(\frac{7}{3}c^2+m)\gamma}\gamma^{2+\beta}\\
 															&\quad+\frac{C}{m}\gamma^{2+\beta}(\mathbb{E}\left[V_c(x_0)\right]+b_a)e^{m\gamma}\\
 															&\quad +C\gamma^{3+\beta}\mathbb{E}\left[V_c(\bar{x}_0)\right]\sum_{k=0}^ne^{-\frac{7}{3}c^2\gamma k-m\gamma (n-k)},
\end{split}
\end{align*}
where the last inequality holds by using $1-e^{-m \gamma } \geq m \gamma e^{-m \gamma}$, and this indicates (see Appendix \ref{wassersteinind} for a detailed proof)
\begin{equation}\label{wassersteinindeq}
 \mathbb{E}\left[|e_{(n+1)\gamma}|^2\right]	 \leq  e^{-m\gamma (n+1)}\mathbb{E}\left[|x_0-\bar{x}_0|^2\right]+C \gamma^{2+\beta},
\end{equation}
Note that $(x_0, \bar{x}_0)$ is distributed according to $\zeta_0$, then \eqref{wasserstein1} can be obtained by using Theorem 1 in \cite{DM16} and the triangle inequality.

\subsection{Proof of Theorem \ref{thmtv}}
By applying the following lemma, one can show that without using \ref{h4}, the rate of convergence in total variation norm is of order 1, which is properly stated in Theorem \ref{thmtv}.
\begin{lemma}\label{lemmatv} Asuume \ref{h1} and \ref{h3} are satisfied. Let $p \in \mathbb{N}$ and $\nu_0$ be a probability measure on $(\mathbb{R}^d, \mathcal{B}(\mathbb{R}^d))$. There exists $C>0$ such that for all $\gamma \in (0,1)$
\[
\mathrm{KL}(\nu_0R_{\gamma}^p | \nu_0P_{p\gamma}) \leq C \gamma^3 \int_{\mathbb{R}^d} \sum_{i=0}^{p-1}\left(\int_{\mathbb{R}^d}V_c(z)R_{\gamma}^i(y,dz)\right)\nu_0(dy).
\]
\end{lemma}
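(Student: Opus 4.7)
The natural approach is a Girsanov-type change of measure on path space. Let $\mathbb{Q}$ denote the law on $\mathcal{C}([0,p\gamma],\mathbb{R}^d)$ of the continuous-time interpolation $(\bar{x}_t)_{t\in[0,p\gamma]}$ defined in \eqref{eqnintp} and started from $\nu_0$, and let $\mathbb{P}^x$ denote the law of the Langevin diffusion \eqref{eqn:sde} started from $\nu_0$ (both driven by the same Brownian motion $(w_t)_{t\geq 0}$). Since \eqref{eqnintp} agrees with \eqref{eqnscheme} at grid points and $\mathrm{Law}(X_n)=\mathrm{Law}(\overline{X}_n)$, the time-$p\gamma$ marginal of $\mathbb{Q}$ equals $\nu_0 R_\gamma^p$, while the time-$p\gamma$ marginal of $\mathbb{P}^x$ is $\nu_0 P_{p\gamma}$. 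The data-processing inequality then yields
\[
\mathrm{KL}(\nu_0 R_\gamma^p \,|\, \nu_0 P_{p\gamma}) \leq \mathrm{KL}(\mathbb{Q}\,|\,\mathbb{P}^x).
\]

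\textbf{Girsanov step.} Rewrite the interpolated scheme as $d\bar{x}_t = -\nabla U(\bar{x}_t)\,dt + \bigl(\nabla U(\bar{x}_t)-\nabla \tilde{U}_\gamma(t,\bar{x}_{\lfloor t/\gamma\rfloor\gamma})\bigr)dt + \sqrt{2}\,dw_t$. Since the tamed coefficients in \eqref{deftamedcoeff} are uniformly bounded by a negative power of $\gamma$ (Remark \ref{tametermbd}) and $(\bar{x}_t)$ has all polynomial moments by Proposition \ref{momentbound}, the Novikov condition is satisfied, so Girsanov's theorem gives
\[
\mathrm{KL}(\mathbb{Q}\,|\,\mathbb{P}^x) = \frac{1}{4}\int_0^{p\gamma} \mathbb{E}\bigl|\nabla U(\bar{x}_t) - \nabla \tilde{U}_\gamma(t,\bar{x}_{\lfloor t/\gamma\rfloor\gamma})\bigr|^2\,dt.
\]

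\textbf{Decomposition and per-step bound.} On each subinterval $[i\gamma,(i+1)\gamma)$, split the integrand as $A_t^{(i)} + B^{(i)}$, where
\[
A_t^{(i)} := \nabla U(\bar{x}_t)-\nabla U(\bar{x}_{i\gamma})-\nabla U_{1,\gamma}(t,\bar{x}_{i\gamma})-\nabla U_{2,\gamma}(t,\bar{x}_{i\gamma}),\quad B^{(i)} := \nabla U(\bar{x}_{i\gamma})-\nabla U_\gamma(\bar{x}_{i\gamma}).
\]
Lemma \ref{rate4terms} directly supplies $\mathbb{E}^{\mathscr{F}_{i\gamma}}|A_t^{(i)}|^2 \leq C\gamma^2 V_c(\bar{x}_{i\gamma})$, and the elementary inequality $|1-(1+u)^{-2/3}|\leq \tfrac{2}{3}u$ for $u\geq 0$ applied to $u=\gamma^{3/2}|\nabla U(\bar{x}_{i\gamma})|^{3/2}$ in the definition of $\nabla U_\gamma$ gives $|B^{(i)}|\leq \tfrac{2}{3}\gamma^{3/2}|\nabla U(\bar{x}_{i\gamma})|^{5/2}$, whose square is bounded by $C\gamma^3 V_c(\bar{x}_{i\gamma})$ via Remark \ref{poly}. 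Integrating $|A_t^{(i)}+B^{(i)}|^2 \leq 2|A_t^{(i)}|^2+2|B^{(i)}|^2$ over a length-$\gamma$ subinterval therefore contributes $C\gamma^3\,\mathbb{E}[V_c(\bar{x}_{i\gamma})]$ (with the taming-error part only $O(\gamma^4)$). Summing over $i=0,\dots,p-1$ and applying the Markov identity $\mathbb{E}_{\nu_0}[V_c(\bar{x}_{i\gamma})]=\int_{\mathbb{R}^d}\int_{\mathbb{R}^d}V_c(z)R_\gamma^i(y,dz)\,\nu_0(dy)$ yields the stated bound.

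\textbf{Main obstacle.} The only nontrivial technical point is the verification that the Radon--Nikodym density implicit in the Girsanov step is a genuine martingale so that the KL formula is an equality rather than an inequality; this is where the uniform-in-$x$ taming bounds of Remark \ref{tametermbd} combined with the exponential moment control of Proposition \ref{momentbound} are used. Once this is in place, everything reduces to Lemma \ref{rate4terms} and a one-line taming-error estimate.
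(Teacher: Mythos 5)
Your proposal follows the paper's argument essentially step for step: a Girsanov-type computation on path space (the paper invokes Theorem 7.19 of Liptser--Shiryaev, the diffusion-type analogue of Girsanov) yields the same path-space KL formula, the data-processing inequality (the paper cites Theorem 4.1 of Kullback) projects it to time marginals, and the split of $\nabla U(\bar{x}_s)-\nabla\tilde{U}_\gamma(s,\bar{x}_{i\gamma})$ into the $A_t^{(i)}$ term controlled by Lemma \ref{rate4terms} plus the taming-error term $B^{(i)}$ is the same decomposition the paper uses. You additionally spell out the taming-error estimate $|\nabla U-\nabla U_\gamma|\le \tfrac{2}{3}\gamma^{3/2}|\nabla U|^{5/2}$ that the paper leaves implicit, but the argument is identical.
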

\begin{proof}
Denote by $\mu_p^y$ and $\bar{\mu}_p^y$ the laws on $\mathcal{C}([0, p\gamma], \mathbb{R}^d)$ of the SDE \eqref{eqn:sde} and of the linear interpolation \eqref{eqnintp} of the scheme both started at $y \in \mathbb{R}^d$. Denote by $(\mathscr{F}_t)_{t\geq 0}$ the filtration associated with $(w_t)_{t\geq 0}$, and by $(x_t, \bar{x}_t)_{t \geq 0}$ the unique strong solution of
\begin{equation}\label{eqnsol}
\begin{cases}
 dx_t = -\nabla U(x_t)dt +\sqrt{2}dw_t, \\
 d\bar{x}_{t} = - \nabla\tilde{ U}_{\gamma}(t,\bar{x}_{\floor{t/\gamma}\gamma}) \, dt+ \sqrt{2}w_t,
\end{cases}
\end{equation}
where $ - \nabla\tilde{ U}_{\gamma}(t,\bar{x}_{\floor{t/\gamma}\gamma})$ is defined in \eqref{eqnintp}. Then, by taking into consideration Definition 7 concerning diffusion type processes and Lemma 4.9 which refers to their representations in section 4.2 from \cite{lipshiry}, Theorem 7.19 in \cite{lipshiry} can be applied to obtain the Radon-Nikodym derivative of $\mu_p^y$ w.r.t. $\bar{\mu}_p^y$, i.e.
\begin{align} \label{rdderivative}
\begin{split}
\frac{d\mu_p^y}{d\bar{\mu}_p^y}((\bar{x}_t)_{t \in [0, p\gamma]})	& = \exp\left(\frac{1}{2}\int_0^{p\gamma}(-\nabla U(\bar{x}_s)+\nabla\tilde{ U}_{\gamma}(s, \bar{x}_{\floor{s/\gamma}\gamma}))d\bar{x}_s\right. \\
																	& \quad \left. -\frac{1}{4}\int_0^{p\gamma}\left(|\nabla U(\bar{x}_s)|^2 -|\nabla\tilde{ U}_{\gamma}(s, \bar{x}_{\floor{s/\gamma}\gamma})|^2\right)\,ds\right).
\end{split}
\end{align}
Note that the assumptions of Theorem 7.19 in \cite{lipshiry} are satisfied due to proposition \ref{sdemomentbound} and \ref{momentbound}. By using \eqref{rdderivative}, one obtains
\begin{align*} 
\mathrm{KL}(\bar{\mu}_p^y|\mu_p^y) 	& = \mathbb{E}_y \left(-\log\left(\frac{d\mu_p^y}{d\bar{\mu}_p^y}((\bar{x}_t)_{t \in [0, p\gamma]}) \right)\right)\\
													& = \frac{1}{4} \int_0^{p\gamma}\mathbb{E}_y\left(\left|\nabla U(\bar{x}_s)-\nabla\tilde{ U}_{\gamma}(s,\bar{x}_{\floor{s/\gamma}\gamma})\right|^2\right)\,ds\\
													&= \frac{1}{4} \sum_{i=0}^{p-1}\int_{i\gamma}^{(i+1)\gamma}\mathbb{E}_y\left(\left|\nabla U(\bar{x}_s)-\nabla\tilde{ U}_{\gamma}(s,\bar{x}_{i\gamma})\right|^2\right)\,ds\\
													& \leq \frac{1}{2} \sum_{i=0}^{p-1}\int_{i\gamma}^{(i+1)\gamma}\mathbb{E}_y\left(\mathbb{E}^{\mathscr{F}_{i\gamma}}\left(\left|\nabla U(\bar{x}_s)-\nabla U(\bar{x}_{i\gamma}) \right.\right.\right.\\
													&\hspace{6em} \left.\left.\left. -\nabla U_{1,\gamma}(s,\bar{x}_{i\gamma}) -\nabla U_{2,\gamma}(s,\bar{x}_{i\gamma})\right|^2\right)\right)\,ds\\
													&\hspace{1em}+\frac{1}{2} \sum_{i=0}^{p-1}\int_{i\gamma}^{(i+1)\gamma}\mathbb{E}_y\left(\mathbb{E}^{\mathscr{F}_{i\gamma}}\left(\left|\nabla U(\bar{x}_{i\gamma})-\nabla U_{\gamma}(\bar{x}_{i\gamma})\right|^2\right)\right)\,ds\\
													&\leq C\gamma^3 \sum_{i=0}^{p-1}\mathbb{E}_y\left(V_c(\bar{x}_{i\gamma})\right), 
\end{align*}
where the last inequality holds due to Lemma \ref{rate4terms}. Then, by Theorem 4.1 in \cite{kullback}, it follows that
\[
\mathrm{KL}(\delta_y R_{\gamma}^p|\delta_y P_{p\gamma})  \leq \mathrm{KL}(\bar{\mu}_p^y|\mu_p^y) \leq C\gamma^3 \sum_{i=0}^{p-1}\mathbb{E}_y\left(V_c(\bar{x}_{i\gamma})\right).
\]
Finally, applying the tower property yields the desired result,
\begin{align*}
\mathrm{KL}(\nu_0R_{\gamma}^p | \nu_0P_{p\gamma})& \leq C\gamma^3 \sum_{i=0}^{p-1}\mathbb{E}\left(\mathbb{E}_y\left(V_c(\bar{x}_{i\gamma})\right) \right)\\
& = C \gamma^3 \int_{\mathbb{R}^d} \sum_{i=0}^{p-1}\left(\int_{\mathbb{R}^d}V_c(z)R_{\gamma}^i(y,dz)\right)\nu_0(dy).
\end{align*}
\end{proof}
\noindent \textbf{Proof of Theorem \ref{thmtv}.} The proof follows along the same lines as the proof of Theorem 4 in \cite{tula}, but for the completeness, the details are given below.

By Proposition \ref{sdemomentbound}, for all $n \in \mathbb{N}$ and $x \in \mathbb{R}^d$, we have
\begin{align*}
\|\delta_x R_{\gamma}^n - \pi\|_{V_c^{1/2}}
		& \leq \|\delta_x P_{n\gamma}-\pi\|_{V_c^{1/2}}+\|\delta_x R_{\gamma}^n-\delta_x P_{n\gamma}\|_{V_c^{1/2}}\\
		&  \leq C_{c/2}\rho^{n\gamma}_{c/2}V_{c}^{1/2}(x)+\|\delta_x R_{\gamma}^n-\delta_x P_{n\gamma}\|_{V_c^{1/2}}.
\end{align*}
Denote by $k_{\gamma}=\ceil{\gamma^{-1}}$, and by $q_{\gamma}$, $r_{\gamma}$ the quotient and the remainder of the Euclidian division of $n$ by $k_{\gamma}$, i.e. $n = q_{\gamma}k_{\gamma}+r_{\gamma}$. Then, 
\[
\|\delta_x R_{\gamma}^n-\delta_x P_{n\gamma}\|_{V_c^{1/2}} \leq I_1 +I_2,
\]
where 
\begin{align*}
I_1 	&= \|\delta_xR_{\gamma}^{q_{\gamma}k_{\gamma}}P_{r_{\gamma}\gamma}-\delta_xR_{\gamma}^n\|_{V_c^{1/2}}\\
I_2 	&= \sum_{i=1}^{q_{\gamma}}\|\delta_xR_{\gamma}^{(i-1)k_{\gamma}}P_{(n-(i-1)k_{\gamma})\gamma}-\delta_xR_{\gamma}^{ik_{\gamma}}P_{(n-ik_{\gamma})\gamma}\|_{V_c^{1/2}}\\
		& \leq \sum_{i=1}^{q_{\gamma}}C_{c/2}\rho^{(n-ik_{\gamma})\gamma}_{c/2}\|\delta_xR_{\gamma}^{(i-1)k_{\gamma}}P_{k_{\gamma}\gamma}-\delta_xR_{\gamma}^{ik_{\gamma}}\|_{V_c^{1/2}}
\end{align*}
By applying Lemma 24 in \cite{DM17} to $I_1$, we have
\begin{align}\label{tveqn1}
\|\delta_xR_{\gamma}^{q_{\gamma}k_{\gamma}}P_{r_{\gamma}\gamma}-\delta_xR_{\gamma}^n\|^2_{V_c^{1/2}} &\leq 2 \left(\delta_xR_{\gamma}^{q_{\gamma}k_{\gamma}}P_{r_{\gamma}\gamma}(V_c)+\delta_xR_{\gamma}^n(V_c)\right)\nonumber \\
&\quad\times \mathrm{KL}(\delta_xR_{\gamma}^n|\delta_xR_{\gamma}^{q_{\gamma}k_{\gamma}}P_{r_{\gamma}\gamma}).
\end{align}
Then, by Proposition \ref{momentbound} and Lemma \ref{lemmatv}, one obtains
\begin{align}\label{tveqn2}
\begin{split}
\mathrm{KL}(\delta_xR_{\gamma}^n|\delta_xR_{\gamma}^{q_{\gamma}k_{\gamma}}P_{r_{\gamma}\gamma}) 
	&\leq  C\gamma^3\sum_{j=0}^{r_{\gamma}-1}\int_{\mathbb{R}^d}V_c(z)\delta_xR_{\gamma}^{q_{\gamma}k_{\gamma}+j}(dz)\\
	& \leq  C\gamma^3(1+\gamma^{-1})\left(e^{-\frac{7}{3}c^2q_{\gamma}k_{\gamma}\gamma}V_c(x)+\frac{3b}{7c^2}e^{\frac{7}{3}c^2\gamma}\right),
\end{split}
\end{align}
where the last inequality holds since $r_{\gamma} \leq k_{\gamma} \leq 1+\gamma^{-1}$. Furthermore, by Proposition \ref{sdemomentbound} and Proposition \ref{momentbound},
\begin{equation}\label{tveqn3}
\delta_xR_{\gamma}^{q_{\gamma}k_{\gamma}}P_{r_{\gamma}\gamma}(V_c)+\delta_xR_{\gamma}^n(V_c) \leq 2\left(e^{-\frac{7}{3}c^2q_{\gamma}k_{\gamma}\gamma}V_c(x)+\frac{3b}{7c^2}e^{\frac{7}{3}c^2\gamma}+b_c\right).
\end{equation}
Substituting \eqref{tveqn2} and \eqref{tveqn3} into \eqref{tveqn1} yields
\begin{align*}
I_1& \leq 2C^{1/2}\gamma^{3/2}(1+\gamma^{-1})^{1/2}\left(e^{-\frac{7}{3}c^2q_{\gamma}k_{\gamma}\gamma}V_c(x)+\frac{3b}{7c^2}e^{\frac{7}{3}c^2\gamma}+b_c\right)\\
& \leq C (\lambda^{n\gamma}V_c(x)+\gamma),
\end{align*}
where $\lambda \in (0,1)$. By using similar arguments to $I_2$, one obtains \eqref{tvthm1}.

\section{Lipschitz case}\label{lipcase}
In the context of a Lipschitz gradient, assume \ref{h4} - \ref{h7} hold. Then, by \ref{h5} and \ref{h6}, one obtains, for any $x, y \in \mathbb{R}^d$
\begin{equation}\label{usefulineq}
|\nabla^2 U(x)y| \leq L_1 |y|, \quad |\vec{\Delta}(\nabla U(x))| \leq d L_2.
\end{equation}
One also notice that by \cite[Theorem~2.1.12]{nesterov}, under \ref{h4} and \ref{h5}, for all $x, y \in \mathbb{R}^{d}$,
\begin{equation}\label{h4-alt}
(x-y)\left(\nabla U(x)-\nabla U(y)\right) \geq \tilde{m} | x - y|^2 +\frac{1}{m+L_1}|\nabla U(x)-\nabla U(y)|^2,
\end{equation}
where we have set
\begin{equation}
\label{eq:definition-tilde-m}
\tilde{m} = \frac{mL_1}{m+L_1} \,.
\end{equation}
The linear interpolation of the algorithm \eqref{eqnschemelip} becomes
\begin{align}\label{lipscheme}
\begin{split} 
&\tilde{x}_{t} = \tilde{x}_0-\int_0^{t} \nabla\tilde{ U}(s,\tilde{x}_{\floor{s/\gamma}\gamma}) \, ds+ \sqrt{2}w_t,
\end{split}
\end{align}
for all $t \geq 0$, where
\[
\nabla\tilde{ U} (s,\tilde{x}_{\floor{s/\gamma}\gamma}) =  \nabla U (\tilde{x}_{\floor{s/\gamma}\gamma})+\nabla U_1(s, \tilde{x}_{\floor{s/\gamma}\gamma}) +\nabla U_2(s,\tilde{x}_{\floor{s/\gamma}\gamma}),
\]
with
\begin{align*} \label{u1comparison}
&\nabla U_1(s,\tilde{x}_{\floor{s/\gamma}\gamma}) \\
&=- \int_{\floor{s/\gamma}\gamma}^s \left( \nabla^2 U(\tilde{x}_{\floor{s/\gamma}\gamma})\nabla U(\tilde{x}_{\floor{s/\gamma}\gamma})  -\vec{\Delta}(\nabla U)(\tilde{x}_{\floor{s/\gamma}\gamma})\right)\, dr, 
\end{align*}
and
\[
\nabla U_2(s,\tilde{x}_{\floor{s/\gamma}\gamma})=\sqrt{2} \int_{\floor{s/\gamma}\gamma}^s \nabla^2 U (\tilde{x}_{\floor{s/\gamma}\gamma})\, dw_r.
\]
One notes that for any $n \in \mathbb{N}$, $\tilde{X}_n = \tilde{x}_{n \gamma}$.
\subsection{Moment bounds}
\begin{prop}\label{lip2ndmoment} Assume \ref{h4} - \ref{h7} are satisfied. Let $x^{\ast}$ be the unique minimizer of $U$. Then, for all $x \in \mathbb{R}^d$,  $\gamma \in \left(0,\frac{1}{\tilde{m}} \wedge \frac{8\tilde{m}^2}{m( 2L_1^2+7\tilde{m}L_1)}\right)$ and $n \in \mathbb{N}$,
\[
\mathbb{E}^{\mathscr{F}_{0}}|\tilde{x}_{(n+1)\gamma}-x^{\ast}|^2 \leq (1- \tilde{m}\gamma)^{n+1}|\tilde{x}_0 - x^{\ast}|^2+\frac{q_1}{\tilde{m}},
\]
where $q_1 = \left(\frac{L_2^2}{2\tilde{m}}+\frac{ 3L_2^2}{2} \right)d^2 +(4L_1^2 +4)d$ and $\tilde{m}$ is given in \eqref{eq:definition-tilde-m}.
\end{prop}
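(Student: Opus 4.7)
The plan is to establish the one-step drift inequality
\[
\mathbb{E}^{\mathscr{F}_{n\gamma}}|\tilde{X}_{n+1}-x^{\ast}|^2 \le (1-\tilde{m}\gamma)\,|\tilde{X}_n-x^{\ast}|^2+\gamma q_1,
\]
and then unroll it geometrically. Writing $\Delta_n=\tilde{X}_n-x^{\ast}$ and expanding $|\Delta_{n+1}|^2$ from the update \eqref{eqnschemelip}, conditional expectation on $\mathscr{F}_{n\gamma}$ kills the $Z_{n+1}$-linear cross terms (centredness and independence) and leaves
\[
\mathbb{E}^{\mathscr{F}_{n\gamma}}|\Delta_{n+1}|^2=|\Delta_n|^2+2\gamma\,\Delta_n\mu(\tilde X_n)+\gamma^2|\mu(\tilde X_n)|^2+2\gamma\,|\sigma(\tilde X_n)|_{\mathsf F}^2,
\]
using $\mathbb{E}^{\mathscr{F}_{n\gamma}}|\sigma(\tilde X_n)Z_{n+1}|^2=\operatorname{tr}(\sigma\sigma^{\mathsf T})=|\sigma|_{\mathsf F}^2$.

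I would then bound the four pieces separately. The principal cross-term $-2\gamma\Delta_n\nabla U(\tilde X_n)$ is controlled by the enhanced strong-convexity inequality \eqref{h4-alt} (with $\nabla U(x^{\ast})=0$), giving $-2\tilde m\gamma|\Delta_n|^2-\tfrac{2\gamma}{m+L_1}|\nabla U(\tilde X_n)|^2$. The remaining contributions to $2\gamma\Delta_n\mu$, namely $\gamma^2\Delta_n(\nabla^2U\nabla U)(\tilde X_n)$ and $-\gamma^2\Delta_n\vec{\Delta}(\nabla U)(\tilde X_n)$, are bounded by $\gamma^2L_1|\Delta_n||\nabla U|$ and $\gamma^2 dL_2|\Delta_n|$ using \ref{h5} and the consequence $|\vec{\Delta}(\nabla U)|\le dL_2$ of \ref{h6} in \eqref{usefulineq}; Young's inequality with parameter $\alpha$ tuned so that the resulting $|\Delta_n|^2$ coefficient is $\tilde m\gamma/2$ on each side contributes exactly $\tilde m\gamma|\Delta_n|^2$ of penalty, and leaves residuals of order $\tfrac{\gamma^3 L_1^2}{2\tilde m}|\nabla U|^2$ and $\tfrac{\gamma^3 d^2L_2^2}{2\tilde m}$. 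For the deterministic $\gamma^2|\mu|^2$ block I would apply the elementary bound $|a+b+c|^2\le 3(|a|^2+|b|^2+|c|^2)$ together with the same Lipschitz estimates to produce an inequality of the shape $\gamma^2|\mu|^2\le C_1\gamma^2|\nabla U|^2+C_2\gamma^4L_1^2|\nabla U|^2+C_3\gamma^4d^2L_2^2$. For the noise term, using $0\preceq\nabla^2U\preceq L_1\mathbf{I}_d$ (from \ref{h4} and \ref{h5}), one gets $|\sigma|_{\mathsf F}^2=d-\gamma\operatorname{tr}(\nabla^2U)+(\gamma^2/3)\operatorname{tr}((\nabla^2U)^2)\le d+(\gamma^2/3)L_1^2 d$, hence $2\gamma|\sigma|_{\mathsf F}^2\le 2\gamma d+\tfrac{2}{3}\gamma^3 L_1^2 d$.

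Collecting these estimates, the coefficient of $|\Delta_n|^2$ telescopes to $1-\tilde m\gamma$, while the aggregated coefficient multiplying $|\nabla U(\tilde X_n)|^2$ takes the form $-\tfrac{2\gamma}{m+L_1}+\tfrac{\gamma^3L_1^2}{2\tilde m}+C_1\gamma^2+C_2\gamma^4L_1^2$. The step-size restriction $\gamma<\tfrac{1}{\tilde m}\wedge\tfrac{8\tilde m^2}{m(2L_1^2+7\tilde m L_1)}$ is precisely engineered (via $1/(m+L_1)=\tilde m/(mL_1)$) to render this aggregate non-positive, so the entire $|\nabla U|^2$ block may be dropped. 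The remaining additive constant $2\gamma d+\tfrac{2}{3}\gamma^3L_1^2d+\tfrac{\gamma^3d^2L_2^2}{2\tilde m}+C_3\gamma^4d^2L_2^2$ is then bounded using $\gamma\le 1/\tilde m\le 1$ to demote each $\gamma^k$ ($k\ge 1$) to a single $\gamma$, giving $\gamma q_1$ with $q_1=(L_2^2/(2\tilde m)+3L_2^2/2)d^2+(4L_1^2+4)d$ after the constants are chosen to majorise the bookkeeping.

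Finally, induction together with the geometric sum $\sum_{k=0}^{n}(1-\tilde m\gamma)^k\le 1/(\tilde m\gamma)$ (which is valid because $\tilde m\gamma\in(0,1)$) yields
\[
\mathbb{E}^{\mathscr{F}_0}|\Delta_{n+1}|^2\le(1-\tilde m\gamma)^{n+1}|\Delta_0|^2+\gamma q_1\sum_{k=0}^{n}(1-\tilde m\gamma)^k\le(1-\tilde m\gamma)^{n+1}|\Delta_0|^2+\frac{q_1}{\tilde m},
\]
which is the asserted bound. The main obstacle is the delicate verification that the accumulated $|\nabla U|^2$ coefficient is non-positive under the step-size restriction: this amounts to comparing four terms of different polynomial order in $\gamma$, and it is exactly this comparison that dictates the particular shape $8\tilde m^2/[m(2L_1^2+7\tilde m L_1)]$ of the second upper bound on $\gamma$.
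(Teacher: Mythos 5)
Your overall strategy is the same as the paper's: set up a one-step drift inequality, control the linear cross term via the enhanced strong-convexity inequality \eqref{h4-alt}, use Young's inequality to trade the remaining drift-cross terms against a $\tilde m\gamma|\Delta_n|^2$ penalty, bound the diffusion contribution by trace estimates, invoke the step-size restriction to discard the $|\nabla U|^2$ block, and conclude by a geometric sum. The one structural deviation is where you place the noise: you work directly with the discrete update $\tilde X_{n+1} = \tilde X_n + \mu\gamma + \sqrt{2\gamma}\,\sigma Z_{n+1}$ and use $\mathbb{E}|\sigma Z|^2=|\sigma|_{\mathsf F}^2=\operatorname{tr}(\sigma^2)$, whereas the paper works through the interpolation \eqref{lipscheme} and writes the stochastic part as $-\sqrt{2}\int\!\!\int \nabla^2 U\,dw\,ds+\sqrt{2}\int dw$, bounding it by $2a^2+2b^2$. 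Since the two representations agree in law, yours is valid and in fact slightly cleaner: you obtain $2\gamma d + \tfrac{2}{3}\gamma^3 L_1^2 d$ in place of the paper's $4\gamma d + 4\gamma^3 L_1^2 d$, so the additive constant can only shrink.

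There is, however, a genuine gap in your treatment of the $\gamma^2|\mu|^2$ block. You propose the blanket bound $|a+b+c|^2 \le 3(|a|^2+|b|^2+|c|^2)$, which produces a coefficient of $3\gamma^2$ on $|\nabla U(\tilde X_n)|^2$. The paper instead expands $\bigl|-\gamma\nabla U + \tfrac{\gamma^2}{2}(\nabla^2 U\nabla U - \vec{\Delta}(\nabla U))\bigr|^2$ exactly and \emph{keeps the cross terms}: the term $-\gamma^3\nabla U\,\nabla^2 U\,\nabla U$ is nonpositive because $\nabla^2 U\succeq 0$ under \ref{h4} and is simply discarded, while $\gamma^3\nabla U\,\vec{\Delta}(\nabla U)$ is split by Young so as to contribute only $\tfrac{\gamma^2}{4}|\nabla U|^2$. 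This yields the coefficient $\tfrac{5}{4}\gamma^2$ on $|\nabla U|^2$, not $3\gamma^2$. Your looser constant matters because the step-size bound $\gamma < 8\tilde m^2/[m(2L_1^2+7\tilde m L_1)]$ in the statement is reverse-engineered to make the aggregated $|\nabla U|^2$ coefficient nonpositive for the paper's sharper $\tfrac54\gamma^2$; with $3\gamma^2$ in its place the same $\gamma$-range no longer suffices (the requisite inequality $3\gamma + \tfrac{\gamma^2 L_1^2}{2\tilde m} + \tfrac{3\gamma^3 L_1^2}{4} \le \tfrac{2\tilde m}{mL_1}$ is strictly more demanding). You assert non-positivity of the aggregate without verifying it, so this step needs the convexity-aware expansion, not a three-term Young bound. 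A smaller slip: you write ``$\gamma\le 1/\tilde m\le 1$'' to demote $\gamma^k$ to $\gamma$, but $1/\tilde m$ need not be at most $1$; the standing hypothesis $\gamma\in(0,1)$ should be invoked directly.
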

\begin{proof}
Denote by 
\[
\Delta_n = \tilde{x}_{n\gamma}-x^{\ast}- \nabla U(\tilde{x}_{n\gamma})\gamma+\frac{\gamma^2}{2}\left(\nabla^2U(\tilde{x}_{n\gamma})\nabla U(\tilde{x}_{n\gamma})-\vec{\Delta}(\nabla U)(\tilde{x}_{n\gamma})\right),
\] 
where $x^{\ast}$ is the unique minimizer of $U$ and one calculates
\begin{align}
\begin{split}\label{2ndmoment}
&\mathbb{E}^{\mathscr{F}_{n\gamma}}|\tilde{x}_{(n+1)\gamma}-x^{\ast}|^2	\\
&= \mathbb{E}^{\mathscr{F}_{n\gamma}}\left|\Delta_n -\sqrt{2}\int_{n\gamma}^{(n+1)\gamma} \int_{n\gamma}^r \nabla^2U( \tilde{x}_{n\gamma})\,dw_{s}\,dr +\sqrt{2}\int_{n\gamma}^{(n+1)\gamma}\,dw_r\right|^2 \\
																							& =|\Delta_n|^2	+4\mathbb{E}^{\mathscr{F}_{n\gamma}}\left|\int_{n\gamma}^{(n+1)\gamma} \int_{n\gamma}^r \nabla^2U( \tilde{x}_{n\gamma})\,dw_{s}\,dr \right|^2+4d\gamma  \\
																							& \leq |\Delta_n|^2+4\gamma^3  L_1^2d+4\gamma d,
\end{split}
\end{align}
where the last inequality holds due to \eqref{usefulineq}. Then, by using \eqref{usefulineq}, \eqref{h4-alt} and the strong convexity condition of $U$, one obtains, for $\gamma \in \left(0,\frac{1}{\tilde{m}} \wedge \frac{8\tilde{m}^2}{m( 2L_1^2+7\tilde{m}L_1)}\right)$ and $n \in \mathbb{N}$, 
\begin{align*}
&|\Delta_n|^2	\\
& = |\tilde{x}_{n\gamma}-x^{\ast}|^2+\left|- \nabla U(\tilde{x}_{n\gamma})\gamma+\frac{\gamma^2}{2}\left(\nabla^2U(\tilde{x}_{n\gamma})\nabla U(\tilde{x}_{n\gamma})-\vec{\Delta}(\nabla U)(\tilde{x}_{n\gamma})\right)\right|^2 \\
&\quad +2(\tilde{x}_{n\gamma}-x^{\ast})\left(- \nabla U(\tilde{x}_{n\gamma})\gamma+\frac{\gamma^2}{2}\left(\nabla^2U(\tilde{x}_{n\gamma})\nabla U(\tilde{x}_{n\gamma})-\vec{\Delta}(\nabla U)(\tilde{x}_{n\gamma})\right)\right) \nonumber\\
		& \leq (1- 2\tilde{m}\gamma)|\tilde{x}_{n\gamma}-x^{\ast}|^2 - \frac{2\gamma}{m+L_1}|\nabla U(\tilde{x}_{n\gamma}) - \nabla U(x^{\ast})|^2\nonumber \\
		&\quad + \tilde{m}\gamma|\tilde{x}_{n\gamma}-x^{\ast}|^2 +\frac{\gamma^3}{4\tilde{m}}\left|\nabla^2U(\tilde{x}_{n\gamma})\nabla U(\tilde{x}_{n\gamma})-\vec{\Delta}(\nabla U)(\tilde{x}_{n\gamma})\right|^2 \nonumber\\
		&\quad +\gamma^2 |\nabla U(\tilde{x}_{n\gamma}) - \nabla U(x^{\ast})|^2 +\frac{\gamma^4}{4}\left|\nabla^2U(\tilde{x}_{n\gamma})\nabla U(\tilde{x}_{n\gamma})-\vec{\Delta}(\nabla U)(\tilde{x}_{n\gamma})\right|^2\\
		&\quad+\gamma^3\nabla U(\tilde{x}_{n\gamma}) \vec{\Delta}(\nabla U)(\tilde{x}_{n\gamma}),
\end{align*}
which by using $(a+b)^2 \leq 2a^2 +2b^2$ and $2ab \leq a^2+b^2$ for $a, b \geq 0$ yield
\begin{align}\label{deltabd}	
|\Delta_n|^2		& \leq  (1- \tilde{m}\gamma)|\tilde{x}_{n\gamma}-x^{\ast}|^2 \nonumber\\
			&\quad + \left(-\frac{2\gamma}{m+L_1}+\frac{5\gamma^2}{4}+\frac{\gamma^3 L_1^2}{2\tilde{m}}+\frac{\gamma^4L_1^2}{2}\right)|\nabla U(\tilde{x}_{n\gamma}) - \nabla U(x^{\ast})|^2\nonumber\\
		&\quad +\frac{\gamma^3L_2^2}{2\tilde{m}}d^2+\frac{\gamma^4 L_2^2}{2}d^2+\gamma^4L_2^2d^2 \nonumber\\
		& \leq (1- \tilde{m}\gamma)|\tilde{x}_{n\gamma}-x^{\ast}|^2  +\gamma^3\left(\frac{L_2^2}{2\tilde{m}}+\frac{3 L_2^2}{2}\right)d^2,
\end{align}
where $\tilde{m}$ is defined in \eqref{eq:definition-tilde-m}. Substituting the above upper bound into \eqref{2ndmoment} yields
\begin{align*}
\mathbb{E}^{\mathscr{F}_{n\gamma}}|\tilde{x}_{(n+1)\gamma}-x^{\ast}|^2	 \leq (1- \tilde{m}\gamma)|\tilde{x}_{n\gamma}-x^{\ast}|^2  +\gamma q_1,
\end{align*}
where $q_1 = \left(\frac{L_2^2}{2\tilde{m}}+\frac{3 L_2^2}{2}\right)d^2+(4  L_1^2+4)d$, and the result can be obtained by induction.
\end{proof}
\begin{prop} \label{lip4thmoment} Assume \ref{h4} - \ref{h7} are satisfied. Let $x^{\ast}$ be the unique minimizer of $U$. Then, for all $x \in \mathbb{R}^d$, $\gamma \in \left(0,\frac{1}{\tilde{m}} \wedge  \frac{8\tilde{m}^2}{m( 2L_1^2+7\tilde{m}L_1)}\right)$,
\[
\mathbb{E}^{\mathscr{F}_{0}}|\tilde{x}_{(n+1)\gamma}-x^{\ast}|^4 \leq \left(1-\frac{\tilde{m}\gamma}{8}\right)^{n+1}|\tilde{x}_0 - x^{\ast}|^4+\frac{8q_2}{\tilde{m}},
\]
where $q_2 = \left(2+\frac{8}{\tilde{m}\gamma}\right)\left(\frac{L_2^2}{2\tilde{m}}+\frac{3 L_2^2}{2}\right)^2d^4+32\gamma\left(1+\frac{42}{\tilde{m}}\right)(L_1^4+3) d^2$ and $\tilde{m}$ is given in \eqref{eq:definition-tilde-m}.
\end{prop}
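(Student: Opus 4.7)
The plan is to extend the second-moment argument of Proposition \ref{lip2ndmoment} to the fourth moment, exploiting that the stochastic remainder in the one-step decomposition is conditionally Gaussian with zero mean. Writing
\[
\tilde{x}_{(n+1)\gamma} - x^{\ast} = \Delta_n + R_n, \qquad R_n = -\sqrt{2}\int_{n\gamma}^{(n+1)\gamma}\!\int_{n\gamma}^r \nabla^2 U(\tilde{x}_{n\gamma})\,dw_s\,dr + \sqrt{2}\int_{n\gamma}^{(n+1)\gamma}\!dw_r,
\]
with $\Delta_n$ as in the proof of Proposition \ref{lip2ndmoment}, I first interchange the order of integration to recast $R_n$ as a single It\^o integral against $(w_s)$ whose integrand is $\mathscr{F}_{n\gamma}$-measurable. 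Hence, conditionally on $\mathscr{F}_{n\gamma}$, $R_n$ is a centred Gaussian vector in $\mathbb{R}^d$; its even moments are available explicitly, and the bound $|\nabla^2 U|\leq L_1$ from \eqref{usefulineq} yields estimates of the form $\mathbb{E}^{\mathscr{F}_{n\gamma}}|R_n|^2 \leq C d\gamma$ and $\mathbb{E}^{\mathscr{F}_{n\gamma}}|R_n|^4 \leq C'(L_1^4+1) d^2\gamma^2$, whose constants will eventually feed into the $d^2$ piece of $q_2$.

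Next, expanding $|\tilde{x}_{(n+1)\gamma} - x^{\ast}|^4 = (|\Delta_n|^2 + 2\,\Delta_n\!\cdot\! R_n + |R_n|^2)^2$ and taking conditional expectation, every term that is odd in $R_n$ vanishes by the conditional Gaussianity, and the Cauchy--Schwarz bound $(\Delta_n\!\cdot\! R_n)^2 \leq |\Delta_n|^2|R_n|^2$ collapses the surviving even cross term into $|\Delta_n|^2 |R_n|^2$, giving
\[
\mathbb{E}^{\mathscr{F}_{n\gamma}}|\tilde{x}_{(n+1)\gamma} - x^{\ast}|^4 \leq |\Delta_n|^4 + 6\,|\Delta_n|^2\,\mathbb{E}^{\mathscr{F}_{n\gamma}}|R_n|^2 + \mathbb{E}^{\mathscr{F}_{n\gamma}}|R_n|^4.
\]
Squaring the deterministic estimate \eqref{deltabd} established in Proposition \ref{lip2ndmoment} delivers
\[
|\Delta_n|^4 \leq (1-\tilde{m}\gamma)^2|\tilde{x}_{n\gamma}-x^{\ast}|^4 + 2(1-\tilde{m}\gamma)\gamma^3 C_1 d^2\,|\tilde{x}_{n\gamma}-x^{\ast}|^2 + \gamma^6 C_1^2 d^4,
\]
with $C_1 = L_2^2/(2\tilde{m}) + 3L_2^2/2$; the last term already supplies the correct order in $d$ and $\gamma$ for the $d^4$ contribution to $q_2$.

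Finally, I apply the weighted Young inequality $2ab \leq (\tilde{m}\gamma/8)\,a^2 + (8/(\tilde{m}\gamma))\,b^2$ twice: once to the cross term $2(1-\tilde{m}\gamma)\gamma^3 C_1 d^2\,|\tilde{x}_{n\gamma}-x^{\ast}|^2$ in the bound on $|\Delta_n|^4$, and once to the contribution $6|\Delta_n|^2\,\mathbb{E}^{\mathscr{F}_{n\gamma}}|R_n|^2$ after substituting the estimate $|\Delta_n|^2 \leq (1-\tilde{m}\gamma)|\tilde{x}_{n\gamma}-x^{\ast}|^2 + \gamma^3 C_1 d^2$ from \eqref{deltabd}. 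Under the step-size constraint $\gamma < 1/\tilde{m}$ stated in the hypothesis, the coefficient of $|\tilde{x}_{n\gamma}-x^{\ast}|^4$ aggregates to at most $(1-\tilde{m}\gamma)^2 + \tilde{m}\gamma/8 + \tilde{m}\gamma/8 \leq 1 - \tilde{m}\gamma/8$, while the residuals combine into a term of the form $\gamma q_2$ in which the $d^4$ piece carries the prefactor $(2 + 8/(\tilde{m}\gamma))\,C_1^2$ and the $d^2$ piece matches $32\gamma(1+42/\tilde{m})(L_1^4+3)$. The one-step recursion
\[
\mathbb{E}^{\mathscr{F}_{n\gamma}}|\tilde{x}_{(n+1)\gamma}-x^{\ast}|^4 \leq (1-\tilde{m}\gamma/8)|\tilde{x}_{n\gamma}-x^{\ast}|^4 + \gamma q_2
\]
then follows, and induction together with the geometric bound $\sum_{k\geq 0}(1-\tilde{m}\gamma/8)^k \leq 8/(\tilde{m}\gamma)$ produces the stated inequality. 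The main obstacle is purely bookkeeping: choosing the Young weights so that the contraction rate is exactly $\tilde{m}\gamma/8$ and the constants in $q_2$ emerge in the claimed form, which requires tracking each cross product individually rather than appealing to a Burkholder--Davis--Gundy estimate that would lose the sharp constants.
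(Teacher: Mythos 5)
Your proof follows essentially the same route as the paper's: the same decomposition $\tilde{x}_{(n+1)\gamma}-x^{\ast}=\Delta_n+\tilde{\Delta}_{n+1}$, expansion of the fourth power, conditional Gaussianity of the remainder, Cauchy--Schwarz on the even cross product, weighted Young's inequality to hit the $1-\tilde{m}\gamma/8$ contraction, and closing by induction. The one small mathematical difference is favourable to you: you observe that \emph{both} odd terms vanish under $\mathbb{E}^{\mathscr{F}_{n\gamma}}$ -- the linear one $4\Delta_n\tilde{\Delta}_{n+1}|\Delta_n|^2$ and the cubic one $4\Delta_n\tilde{\Delta}_{n+1}|\tilde{\Delta}_{n+1}|^2$ -- because conditionally on $\mathscr{F}_{n\gamma}$ the remainder $\tilde{\Delta}_{n+1}$ is a centred Gaussian vector (its defining integrals have $\mathscr{F}_{n\gamma}$-measurable integrands), and all third conditional moments such as $\mathbb{E}^{\mathscr{F}_{n\gamma}}\bigl[\tilde{\Delta}_{n+1}^{(i)}|\tilde{\Delta}_{n+1}|^2\bigr]$ are zero. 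The paper only discards the linear term and instead retains the cubic one, bounding it by $4|\Delta_n|\,\mathbb{E}^{\mathscr{F}_{n\gamma}}|\tilde{\Delta}_{n+1}|^3$ and absorbing it with an extra Young step, which is slightly looser but leads to the same one-step recursion $\mathbb{E}^{\mathscr{F}_{n\gamma}}|\tilde{x}_{(n+1)\gamma}-x^{\ast}|^4\le(1-\tilde{m}\gamma/8)|\tilde{x}_{n\gamma}-x^{\ast}|^4+\gamma q_2$. Your simplification tidies the bookkeeping and can only improve the constants, but does not alter the orders in $d$ or $\gamma$; everything else (squaring \eqref{deltabd}, choice of Young weights, geometric sum) is exactly the paper's argument.
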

\begin{proof}  Denote by 
\[
\Delta_n = \tilde{x}_{n\gamma}-x^{\ast}- \nabla U(\tilde{x}_{n\gamma})\gamma+\frac{\gamma^2}{2}\left(\nabla^2U(\tilde{x}_{n\gamma})\nabla U(\tilde{x}_{n\gamma})-\vec{\Delta}(\nabla U)(\tilde{x}_{n\gamma})\right),
\] 
and 
\[
\tilde{\Delta}_{n+1} = -\sqrt{2}\int_{n\gamma}^{(n+1)\gamma} \int_{n\gamma}^r \nabla^2U( \tilde{x}_{n\gamma})\,dw_{s}\,dr +\sqrt{2}\int_{n\gamma}^{(n+1)\gamma}\,dw_r.
\] 
One obtains by using Jensen's inequality
\begin{align}
&\mathbb{E}^{\mathscr{F}_{n\gamma}}|\tilde{x}_{(n+1)\gamma}-x^{\ast}|^4	\\
&= \mathbb{E}^{\mathscr{F}_{n\gamma}}\left|\Delta_n +\tilde{\Delta}_{n+1}\right|^4 \nonumber\\
&= \mathbb{E}^{\mathscr{F}_{n\gamma}}\left(|\Delta_n|^2	+2\Delta_n\tilde{\Delta}_{n+1}+|\tilde{\Delta}_{n+1}|^2\right)^2\nonumber \\
&  = \mathbb{E}^{\mathscr{F}_{n\gamma}}\left(|\Delta_n|^4	+4\Delta_n\tilde{\Delta}_{n+1}|\Delta_n|^2+2|\Delta_n|^2|\tilde{\Delta}_{n+1}|^2+4|\Delta_n\tilde{\Delta}_{n+1}|^2\right.\\
&\hspace{15em}\left.+4\Delta_n\tilde{\Delta}_{n+1}|\tilde{\Delta}_{n+1}|^2+|\tilde{\Delta}_{n+1}|^4\right)\nonumber \\
																							& \leq |\Delta_n|^4	+6|\Delta_n|^2\mathbb{E}^{\mathscr{F}_{n\gamma}}|\tilde{\Delta}_{n+1}|^2+\mathbb{E}^{\mathscr{F}_{n\gamma}}|\tilde{\Delta}_{n+1}|^4+4|\Delta_n|\mathbb{E}^{\mathscr{F}_{n\gamma}}|\tilde{\Delta}_{n+1}|^3\nonumber\\
																							& \leq \left(1+\frac{\tilde{m}\gamma}{2}\right)|\Delta_n|^4+\frac{36}{\tilde{m}\gamma}\left(\mathbb{E}^{\mathscr{F}_{n\gamma}}|\tilde{\Delta}_{n+1}|^2\right)^2 +\left(1+\frac{6}{\tilde{m}\gamma}\right)\mathbb{E}^{\mathscr{F}_{n\gamma}}|\tilde{\Delta}_{n+1}|^4\nonumber\\
																							& \leq  \left(1+\frac{\tilde{m}\gamma}{2}\right)|\Delta_n|^4 +32\gamma\left(1+\frac{42}{\tilde{m}}\right)(L_1^4+3) d^2.
\end{align}
Then, by using \eqref{deltabd} and the inequality $(a+b)^2 \leq (1+\epsilon)a^2+(1+\epsilon^{-1})b^2$, for any $a, b \geq 0$, $\epsilon >0$, one obtains, for $\gamma \in \left(0,\frac{1}{\tilde{m}} \wedge  \frac{8\tilde{m}^2}{m( 2L_1^2+7\tilde{m}L_1)}\right)$,
\begin{align*}
&\mathbb{E}^{\mathscr{F}_{n\gamma}}|\tilde{x}_{(n+1)\gamma}-x^{\ast}|^4 \\
&\leq \left(1+\frac{\tilde{m}\gamma}{2}\right)\left((1- \tilde{m}\gamma)|\tilde{x}_{n\gamma}-x^{\ast}|^2  +\gamma^3\left(\frac{L_2^2}{2\tilde{m}}+\frac{3 L_2^2}{2}\right)d^2\right)^2 \\
&\quad+32\gamma\left(1+\frac{42}{\tilde{m}}\right)(L_1^4+3) d^2\\
		& \leq \left(1+\frac{\tilde{m}\gamma}{2}\right)\left(1+\frac{\tilde{m}\gamma}{4}\right)(1- \tilde{m}\gamma)|\tilde{x}_{n\gamma}-x^{\ast}|^4\\
		&\quad  +\left(1+\frac{\tilde{m}\gamma}{2}\right)\left(1+\frac{4}{\tilde{m}\gamma}\right)\gamma^6\left(\frac{L_2^2}{2\tilde{m}}+\frac{3 L_2^2}{2}\right)^2d^4\\
		& \quad+32\gamma\left(1+\frac{42}{\tilde{m}}\right)(L_1^4+3) d^2\\
		& \leq \left(1-\frac{\tilde{m}\gamma}{8}\right)|\tilde{x}_{n\gamma}-x^{\ast}|^4 +\gamma^6\left(2+\frac{8}{\tilde{m}\gamma}\right)\left(\frac{L_2^2}{2\tilde{m}}+\frac{3 L_2^2}{2}\right)^2d^4\\
		&\quad+32\gamma\left(1+\frac{42}{\tilde{m}}\right)(L_1^4+3) d^2,
\end{align*}
which implies
\begin{equation}\label{4thmoment}
\mathbb{E}^{\mathscr{F}_{n\gamma}}|\tilde{x}_{(n+1)\gamma}-x^{\ast}|^4 \leq \left(1-\frac{\tilde{m}\gamma}{8}\right)|\tilde{x}_{n\gamma}-x^{\ast}|^4 +\gamma q_2,
\end{equation}
where $q_2 = \left(2+\frac{8}{\tilde{m}}\right)\left(\frac{L_2^2}{2\tilde{m}}+\frac{3 L_2^2}{2}\right)^2d^4+32 \left(1+\frac{42}{\tilde{m}}\right)(L_1^4+3) d^2$. The desired result follows by induction.
\end{proof}
\subsection{Proof of Theorem \ref{thmwassersteinlip}}
The explicit constants for the second and the fourth moments are obtained, then by using the following lemmas, one can show the rate of convergence in Wasserstein distance.
\begin{lemma}\label{liprate1}
Assume \ref{h4} - \ref{h7} are satisfied. Let $\gamma \in \left(0,\frac{1}{\tilde{m}} \wedge  \frac{8\tilde{m}^2}{m( 2L_1^2+7\tilde{m}L_1)}\right)$. Then, for all $n \in \mathbb{N}$, and $t \in [n\gamma, (n+1)\gamma)$,
\[
\mathbb{E}^{\mathscr{F}_{n\gamma}}\left[|\nabla U_1(t,\tilde{x}_{n\gamma})|^2\right] \leq 2\gamma^2(L_1^4|\tilde{x}_{n\gamma}-x^{\ast}|^2+d^2L_2^2),
\]
\[
\mathbb{E}^{\mathscr{F}_{n\gamma}}\left[|\nabla U_1(t,\tilde{x}_{n\gamma})|^4\right] \leq 8\gamma^4(L_1^8|\tilde{x}_{n\gamma}-x^{\ast}|^4+d^4L_2^4), 
\]
\[
\mathbb{E}^{\mathscr{F}_{n\gamma}}\left[|\nabla U_2(t,\tilde{x}_{n\gamma})|^2 \right] \leq  2\gamma d L_1^2,\quad
\mathbb{E}^{\mathscr{F}_{n\gamma}}\left[|\nabla U_2(t,\tilde{x}_{n\gamma})|^4 \right] \leq 12L_1^4d^2\gamma^2.
\]
\end{lemma}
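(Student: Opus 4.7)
\textbf{Proof proposal for Lemma \ref{liprate1}.} The four inequalities split naturally into two distinct regimes: the $\nabla U_1$ bounds follow from the fact that the integrand is $\mathscr{F}_{n\gamma}$-measurable and independent of the integration variable, while the $\nabla U_2$ bounds follow from It\^o's isometry combined with explicit Gaussian moment identities.

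For the $\nabla U_1$ bounds, I would begin by observing that $\nabla^2 U(\tilde{x}_{n\gamma})\nabla U(\tilde{x}_{n\gamma}) - \vec{\Delta}(\nabla U)(\tilde{x}_{n\gamma})$ is $\mathscr{F}_{n\gamma}$-measurable and constant in $r$, so that pathwise
\[
|\nabla U_1(t,\tilde{x}_{n\gamma})| \leq (t-n\gamma)\bigl|\nabla^2 U(\tilde{x}_{n\gamma})\nabla U(\tilde{x}_{n\gamma}) - \vec{\Delta}(\nabla U)(\tilde{x}_{n\gamma})\bigr|.
\]
I then apply $(a+b)^2 \leq 2(a^2+b^2)$ (respectively $(a+b)^4 \leq 8(a^4+b^4)$), use \eqref{usefulineq} to bound $|\nabla^2 U(\tilde{x}_{n\gamma})\nabla U(\tilde{x}_{n\gamma})| \leq L_1 |\nabla U(\tilde{x}_{n\gamma})|$ and $|\vec{\Delta}(\nabla U)(\tilde{x}_{n\gamma})| \leq dL_2$, and combine these with $|\nabla U(\tilde{x}_{n\gamma})| = |\nabla U(\tilde{x}_{n\gamma}) - \nabla U(x^{\ast})| \leq L_1 |\tilde{x}_{n\gamma}-x^{\ast}|$, valid by \ref{h5} and the fact that $x^{\ast}$ is the minimizer of $U$. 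Since the right-hand side is already $\mathscr{F}_{n\gamma}$-measurable, taking conditional expectations and using $(t-n\gamma)^2 \leq \gamma^2$ and $(t-n\gamma)^4 \leq \gamma^4$ yields the two $\nabla U_1$ inequalities with the stated constants.

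For $\nabla U_2$, since $\nabla^2 U(\tilde{x}_{n\gamma})$ is $\mathscr{F}_{n\gamma}$-measurable, conditionally on $\mathscr{F}_{n\gamma}$ the vector $\nabla U_2(t,\tilde{x}_{n\gamma})$ is centred Gaussian with covariance matrix $\Sigma := 2(t-n\gamma)\,\nabla^2 U(\tilde{x}_{n\gamma})(\nabla^2 U(\tilde{x}_{n\gamma}))^{\mathsf{T}}$. It\^o's isometry gives
\[
\mathbb{E}^{\mathscr{F}_{n\gamma}}|\nabla U_2(t,\tilde{x}_{n\gamma})|^2 = \mathrm{tr}(\Sigma) = 2(t-n\gamma)\,|\nabla^2 U(\tilde{x}_{n\gamma})|_{\mathsf{F}}^2,
\]
and $|\nabla^2 U(x)|_{\mathsf{F}}^2 \leq d\,|\nabla^2 U(x)|^2 \leq dL_1^2$ by \eqref{usefulineq} produces the third inequality. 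For the fourth moment I would invoke the standard Gaussian identity $\mathbb{E}|X|^4 = (\mathrm{tr}\,\Sigma)^2 + 2\,\mathrm{tr}(\Sigma^2)$ for $X \sim \mathcal{N}(0,\Sigma)$; since $\Sigma \succeq 0$ implies $\mathrm{tr}(\Sigma^2) \leq (\mathrm{tr}\,\Sigma)^2$, this simplifies to $\mathbb{E}|X|^4 \leq 3(\mathrm{tr}\,\Sigma)^2$ and yields
\[
\mathbb{E}^{\mathscr{F}_{n\gamma}}|\nabla U_2(t,\tilde{x}_{n\gamma})|^4 \leq 3\bigl(2\gamma dL_1^2\bigr)^2 = 12\,\gamma^2 d^2 L_1^4.
\]

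None of the steps presents any real obstacle; the lemma is essentially bookkeeping. The only subtle point is the fourth-moment estimate for $\nabla U_2$, where using the exact Gaussian formula rather than a generic Burkholder-Davis-Gundy constant is what produces the sharp coefficient $12$ appearing in the statement.
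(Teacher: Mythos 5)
Your proposal is correct and follows the same line the paper intends: the paper's own proof is the single sentence ``The proof is straightforward by using \eqref{usefulineq},'' and your argument is exactly the bookkeeping that remark alludes to — the key observation that the integrands in $\nabla U_1$ and $\nabla U_2$ are $\mathscr{F}_{n\gamma}$-measurable (hence $\nabla U_1$ is deterministic given $\mathscr{F}_{n\gamma}$ and $\nabla U_2$ is conditionally Gaussian), combined with \eqref{usefulineq} and $|\nabla U(\tilde{x}_{n\gamma})|\le L_1|\tilde{x}_{n\gamma}-x^{\ast}|$. Your fourth-moment computation $\mathbb{E}|X|^4\le 3(\mathrm{tr}\,\Sigma)^2$ reproduces the constant $12$ exactly; the only minor comment is that one could equally write $\nabla U_2=\sqrt{2}\,\nabla^2U(\tilde{x}_{n\gamma})(w_t-w_{n\gamma})$, bound $|\nabla U_2|^4\le 4L_1^4|w_t-w_{n\gamma}|^4$ and use $\mathbb{E}|w_t-w_{n\gamma}|^4=d(d+2)(t-n\gamma)^2\le 3d^2\gamma^2$, arriving at the same $12L_1^4d^2\gamma^2$ without invoking the covariance-trace identity.
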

\begin{proof}
The proof is straightforward by using \eqref{usefulineq}.
\end{proof}

\begin{lemma} \label{liprate2}
Assume \ref{h4} - \ref{h7} are satisfied. Let $\gamma \in \left(0,\frac{1}{\tilde{m}} \wedge  \frac{8\tilde{m}^2}{m( 2L_1^2+7\tilde{m}L_1)}\right)$. Then, for all $n \in \mathbb{N}$, and $t \in [n\gamma, (n+1)\gamma)$,
\[
\mathbb{E}^{\mathscr{F}_{n\gamma}}\left[|\tilde{x}_t -\tilde{x}_{n\gamma} |^2\right] \leq \gamma(c_1 |\tilde{x}_{n\gamma}-x^{\ast}|^2+c_2),
\]
where  $c_1= \frac{5L_1^2}{4} +\frac{L_1^4}{2}$ and $c_2 = \frac{3 L_2^2}{2}d^2 +4L_1^2 d +4 d$, 
\[
\mathbb{E}^{\mathscr{F}_{n\gamma}}\left[|\tilde{x}_t -\tilde{x}_{n\gamma} |^4\right] \leq \gamma^2(c_3|\tilde{x}_{n\gamma}-x^{\ast}|^4 +c_4),
\]
where  $c_3=  9 \left(\frac{25}{16}L_1^4 +\frac{L_1^8}{4}\right)$ and $c_4 = \frac{81L_2^4}{4}d^4+416(L_1^4 +3)d^2$, and
\[
\mathbb{E}^{\mathscr{F}_{n\gamma}}\left[|x_t -x_{n\gamma} |^2\right] \leq 2\gamma^2 L_1^2|x_{n\gamma}-x^{\ast}|^2+4\gamma^3L_1^2d+4\gamma d.
\]
\end{lemma}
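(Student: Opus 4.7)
All three estimates follow the same template: write the displacement as a drift integral plus a Brownian increment, apply Cauchy--Schwarz and Young-type inequalities, and feed in Lemma~\ref{liprate1} together with the Lipschitz bound on $\nabla U$ from \ref{h5}.

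For the first bound I would begin from the interpolation \eqref{lipscheme}, writing
\begin{align*}
\tilde{x}_t - \tilde{x}_{n\gamma}
	&= -(t-n\gamma)\nabla U(\tilde{x}_{n\gamma}) \\
	&\quad -\int_{n\gamma}^{t}\bigl(\nabla U_1(s,\tilde{x}_{n\gamma})+\nabla U_2(s,\tilde{x}_{n\gamma})\bigr)\,ds + \sqrt{2}(w_t - w_{n\gamma}).
\end{align*}
Squaring and applying the elementary inequality $|a_1+a_2+a_3+a_4|^2 \leq 4\sum_{i}|a_i|^2$ (or, to shave constants, a weighted Young decomposition $|a+b|^2 \leq (1+\epsilon)|a|^2+(1+\epsilon^{-1})|b|^2$ with a tuned $\epsilon$) and then taking $\mathbb{E}^{\mathscr{F}_{n\gamma}}$: the first piece is deterministic and bounded by $(t-n\gamma)^2 L_1^2|\tilde{x}_{n\gamma}-x^{\ast}|^2$ via \ref{h5} and $\nabla U(x^{\ast})=0$; the two integral corrections are controlled by an inner Cauchy--Schwarz together with Lemma~\ref{liprate1}; the Brownian term contributes $2d(t-n\gamma)$. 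Bounding $(t-n\gamma) \leq \gamma$ throughout and collecting powers of $\gamma$ yields an inequality of the desired form $\gamma(c_1|\tilde{x}_{n\gamma}-x^{\ast}|^2 + c_2)$.

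For the $L^4$ bound the scheme is identical: use the same four-term decomposition, apply $|\sum_{i=1}^{4}a_i|^4 \leq 4^{3}\sum_{i}|a_i|^4$, another inner Cauchy--Schwarz to turn $\mathbb{E}|\int f\,ds|^4$ into $(t-n\gamma)^3 \int \mathbb{E}|f|^4\,ds$, the fourth-moment estimates of Lemma~\ref{liprate1}, and the Gaussian identity $\mathbb{E}|w_t - w_{n\gamma}|^4 = d(d+2)(t-n\gamma)^2 \leq 3d^2\gamma^2$. Squaring the $|\tilde{x}_{n\gamma}-x^{\ast}|^2$ terms produces $|\tilde{x}_{n\gamma}-x^{\ast}|^4$, matching the structure of $c_3$ and $c_4$. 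For the continuous-time SDE, use $x_t - x_{n\gamma} = -\int_{n\gamma}^{t}\nabla U(x_s)\,ds + \sqrt{2}(w_t - w_{n\gamma})$, the splitting $|a+b|^2 \leq 2|a|^2+2|b|^2$, Cauchy--Schwarz in $s$, and \ref{h5} together with $\nabla U(x^{\ast})=0$ to obtain
\[
\mathbb{E}^{\mathscr{F}_{n\gamma}}|x_t - x_{n\gamma}|^2 \leq 2(t-n\gamma)L_1^2\int_{n\gamma}^{t}\mathbb{E}^{\mathscr{F}_{n\gamma}}|x_s - x^{\ast}|^2\,ds + 4d(t-n\gamma).
\]
The remaining task is to control $\mathbb{E}^{\mathscr{F}_{n\gamma}}|x_s - x^{\ast}|^2$ uniformly on $[n\gamma, (n+1)\gamma)$, which follows from an It\^o formula computation for $|x_s - x^{\ast}|^2$ combined with strong convexity \ref{h4}, giving a uniform estimate of the form $|x_{n\gamma}-x^{\ast}|^2 + \mathrm{O}(d/m)$. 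Substituting back and using $(t-n\gamma) \leq \gamma$ delivers the asserted bound.

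The main obstacle is tracking numerical constants carefully enough to absorb the various $\gamma$-factors arising from the Young-type inequalities and the corrections $\nabla U_1,\nabla U_2$ into the explicit $c_1,\dots,c_4$. This is where one must exploit the hypothesis $\gamma \leq 1/\tilde{m} \wedge 8\tilde{m}^2/(m(2L_1^2+7\tilde{m}L_1))$ (rather than merely $\gamma < 1$) and choose the weights $\epsilon$ in the Young inequalities judiciously, instead of using the crude constant $4$ from $(a_1+\cdots+a_4)^2 \leq 4\sum|a_i|^2$; for the SDE bound, the strong-convexity route is preferable to a Gronwall argument since it produces a clean uniform-in-$s$ constant and thereby avoids an extra exponential factor in the final estimate.
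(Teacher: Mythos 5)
Your approach has a genuine gap: the explicit constants $c_1,\dots,c_4$ cannot be reproduced by applying $|a_1+\cdots+a_4|^2 \leq 4\sum_i|a_i|^2$ or a weighted Young inequality to the four-term decomposition you propose. The paper's argument does not rely on Young-type inequalities for the main split at all; it exploits an exact orthogonality. Writing
\[
\tilde{x}_t - \tilde{x}_{n\gamma} = \bar{\Delta}_n + \tilde{\Delta}_t, \qquad
\bar{\Delta}_n = -(t-n\gamma)\nabla U(\tilde{x}_{n\gamma}) + \tfrac{(t-n\gamma)^2}{2}\bigl(\nabla^2 U(\tilde{x}_{n\gamma})\nabla U(\tilde{x}_{n\gamma}) - \vec{\Delta}(\nabla U)(\tilde{x}_{n\gamma})\bigr),
\]
with $\tilde{\Delta}_t = -\sqrt{2}\int_{n\gamma}^t\int_{n\gamma}^r \nabla^2 U(\tilde{x}_{n\gamma})\,dw_s\,dr + \sqrt{2}(w_t - w_{n\gamma})$, one has that $\bar{\Delta}_n$ is $\mathscr{F}_{n\gamma}$-measurable (the $\nabla U_1$ contribution is a deterministic time integral given $\mathscr{F}_{n\gamma}$, not a stochastic correction as you treat it) while $\tilde{\Delta}_t$ has zero conditional mean. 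The cross term therefore drops, yielding $\mathbb{E}^{\mathscr{F}_{n\gamma}}|\tilde{x}_t - \tilde{x}_{n\gamma}|^2 = |\bar{\Delta}_n|^2 + \mathbb{E}^{\mathscr{F}_{n\gamma}}|\tilde{\Delta}_t|^2$, with constant $1$ in front of the drift piece. No Young inequality $|a+b|^2 \le (1+\epsilon)|a|^2 + (1+\epsilon^{-1})|b|^2$ — however $\epsilon$ is tuned — can simultaneously keep the drift factor at $1$ and the stochastic factor bounded, so your path produces strictly larger $c_1$ (roughly $4L_1^2$ instead of $\tfrac{5L_1^2}{4}+\tfrac{L_1^4}{2}$) and hence does not prove the lemma as stated.

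The same issue is more serious for the fourth moment: the paper obtains its constants by expanding $(|\bar{\Delta}_n|^2 + 2\bar{\Delta}_n\tilde{\Delta}_t + |\tilde{\Delta}_t|^2)^2$ and killing the term $4\bar{\Delta}_n\tilde{\Delta}_t|\bar{\Delta}_n|^2$ using $\mathbb{E}^{\mathscr{F}_{n\gamma}}[\tilde{\Delta}_t]=0$, then Young on the remaining cross terms. The inequality $|\sum a_i|^4 \leq 4^3\sum|a_i|^4$ is far too lossy to match $c_3,c_4$. Your treatment of the continuous-time SDE bound, using It\^o's formula and strong convexity to get a uniform estimate on $\mathbb{E}|x_s - x^{\ast}|^2$, is essentially the route the paper takes (it just cites Theorem 1 of Durmus--Moulines for the intermediate bound), so that part is fine. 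To repair the tamed-scheme bounds, split into the conditionally deterministic and conditionally centered pieces as above, use the $L^2$-orthogonality, and apply \ref{h5}, \ref{h6} and the It\^o isometry for the explicit second and fourth moments of the iterated Gaussian integrals.
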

\begin{proof} One observes that
\begin{align}\label{auxineq}
&\mathbb{E}^{\mathscr{F}_{n\gamma}}|\tilde{x}_t-\tilde{x}_{n\gamma}|^2 \nonumber\\
	& =\left|- \nabla U(\tilde{x}_{n\gamma})(t-n\gamma)+\frac{(t-n\gamma)^2}{2}\left(\nabla^2U(\tilde{x}_{n\gamma})\nabla U(\tilde{x}_{n\gamma})-\vec{\Delta}(\nabla U)(\tilde{x}_{n\gamma})\right)\right|^2\nonumber\\
	& \hspace{1em} +4\mathbb{E}^{\mathscr{F}_{n\gamma}}\left|\int_{n\gamma}^t \int_{n\gamma}^r \nabla^2U( \tilde{x}_{n\gamma})\,dw_{s}\,dr \right|^2
	+4d(t-n\gamma ) \nonumber\\ 
	 &\leq |\nabla U(\tilde{x}_{n\gamma}) - \nabla U(x^{\ast})|^2\gamma^2 +\frac{\gamma^4}{4}|\nabla^2U(\tilde{x}_{n\gamma})\nabla U(\tilde{x}_{n\gamma})-\vec{\Delta}(\nabla U)|^2\nonumber\\
	 &\quad +\gamma^3\nabla U \vec{\Delta}(\nabla U) +4\gamma^3 d L_1^2 +4d\gamma \nonumber\\
	& \leq \left(\frac{5\gamma^2}{4} +\frac{\gamma^4L_1^2}{2}\right)|\nabla U(\tilde{x}_{n\gamma}) - \nabla U(x^{\ast})|^2 +\frac{\gamma^2 L_2^2}{2}d^2+\gamma^4 L_2^2 d^2 +4\gamma^3L_1^2 d +4\gamma d \\
	& \leq  \gamma(c_1 |\tilde{x}_{n\gamma}-x^{\ast}|^2+c_2) \nonumber, 
\end{align}
where $c_1= \frac{5L_1^2}{4} +\frac{L_1^4}{2}$ and $c_2 = \frac{3 L_2^2}{2}d^2 +4L_1^2 d +4 d$. Then, denote by 
\[
\bar{\Delta}_n = - \nabla U(\tilde{x}_{n\gamma})(t-n\gamma)+\frac{(t-n\gamma)^2}{2}\left(\nabla^2U(\tilde{x}_{n\gamma})\nabla U(\tilde{x}_{n\gamma})-\vec{\Delta}(\nabla U)(\tilde{x}_{n\gamma})\right)
\] 
and recall 
\[
\tilde{\Delta}_t = -\sqrt{2}\int_{n\gamma}^t \int_{n\gamma}^r \nabla^2U( \tilde{x}_{n\gamma})\,dw_{s}\,dr +\sqrt{2}\int_{n\gamma}^t\,dw_r.
\] 
Notice that $|\bar{\Delta}_n|^2 \leq \gamma((\frac{5L_1^2}{4} +\frac{L_1^4}{2})|\tilde{x}_{n\gamma}-x^{\ast}|^2+\frac{3 L_2^2}{2}d^2)$ by equation \eqref{auxineq}, and then one calculates
\begin{align*}
&\mathbb{E}^{\mathscr{F}_{n\gamma}}|\tilde{x}_t-\tilde{x}_{n\gamma}|^4 \\
& = \mathbb{E}^{\mathscr{F}_{n\gamma}}|\bar{\Delta}_n+\tilde{\Delta}_t |^4 \\
&= \mathbb{E}^{\mathscr{F}_{n\gamma}}\left(|\bar{\Delta}_n|^2+2\bar{\Delta}_n\tilde{\Delta}_t+|\tilde{\Delta}_t|^2\right)^2  \\
																							& \leq |\bar{\Delta}_n|^4	+6|\bar{\Delta}_n|^2\mathbb{E}^{\mathscr{F}_{n\gamma}}|\tilde{\Delta}_t|^2+\mathbb{E}^{\mathscr{F}_{n\gamma}}|\tilde{\Delta}_t|^4+4|\bar{\Delta}_n|\mathbb{E}^{\mathscr{F}_{n\gamma}}|\tilde{\Delta}_t|^3\\
																							& \leq 3 |\bar{\Delta}_n|^4	+13\mathbb{E}^{\mathscr{F}_{n\gamma}}|\tilde{\Delta}_t|^4\\
																							& \leq 9\gamma^2\left(\frac{25}{16}L_1^4 +\frac{L_1^8}{4}\right)|\tilde{x}_{n\gamma}-x^{\ast}|^4+ \gamma^2\frac{81L_2^4}{4}d^4+416\gamma^2(L_1^4 +3)d^2\\
																							& \leq  \gamma^2(c_3|\tilde{x}_{n\gamma}-x^{\ast}|^4 +c_4),
\end{align*}
where $c_3=  9 \left(\frac{25}{16}L_1^4 +\frac{L_1^8}{4}\right)$ and $c_4 = \frac{81L_2^4}{4}d^4+416(L_1^4 +3)d^2$. As for the third result, consider 
\begin{align*}
\mathbb{E}^{\mathscr{F}_{n\gamma}}\left[|x_t -x_{n\gamma} |^2\right] 	
&=\mathbb{E}^{\mathscr{F}_{n\gamma}}\left[\left|-\int_{n\gamma}^t \nabla U(x_r)\,dr +\sqrt{2}\int_{n\gamma}^t \, dw_r\right|^2\right] \\
& \leq 2\gamma  L_1^2\int_{n\gamma}^t\mathbb{E}^{\mathscr{F}_{n\gamma}}| x_r-x^{\ast}|^2\,dr  +4\gamma d\\
& \leq 2\gamma^2 L_1^2|x_{n\gamma}-x^{\ast}|^2+4\gamma^3L_1^2d+4\gamma d,
\end{align*}
where the last inequality holds by using Theorem 1 in \cite{DM16}.
\end{proof}
\begin{lemma} \label{lip4terms}
Assume \ref{h4} - \ref{h7}  are satisfied. Let  $\gamma \in \left(0,\frac{1}{\tilde{m}} \wedge  \frac{8\tilde{m}^2}{m( 2L_1^2+7\tilde{m}L_1)}\right)$. Then, for all $n \in \mathbb{N}$, and $t \in [n\gamma, (n+1)\gamma)$,
\begin{align*}
&\mathbb{E}^{\mathscr{F}_{n\gamma}}\left[|\nabla U(\tilde{x}_t)- \nabla U(\tilde{x}_{n\gamma})- \nabla U_1(t,\tilde{x}_{n\gamma})-\nabla U_2(t,\tilde{x}_{n\gamma})|^2\right]\\
& \leq \gamma^2(c_5|\tilde{x}_{n\gamma}-x^{\ast}|^4+c_6|\tilde{x}_{n\gamma}-x^{\ast}|^2+c_7),
\end{align*}
where $c_5, c_6$ and $c_7$ are given explicitly in the proof.
\end{lemma}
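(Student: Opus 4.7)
The plan is to follow the structure of Lemma \ref{rate4terms} almost verbatim, but now using the global Lipschitz conditions \ref{h5}-\ref{h7} in place of local H\"older/polynomial bounds, and using the explicit second and fourth moment bounds from Propositions \ref{lip2ndmoment}, \ref{lip4thmoment} together with Lemmas \ref{liprate1}, \ref{liprate2} to track the dependence on $|\tilde{x}_{n\gamma}-x^{\ast}|$ and on the dimension $d$.

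First, I would apply It\^o's formula to $\nabla U(\tilde{x}_t)$ along the interpolation \eqref{lipscheme}, almost surely obtaining an expansion of $\nabla U(\tilde{x}_t)-\nabla U(\tilde{x}_{n\gamma})$ as an integral of $-\nabla^2 U(\tilde{x}_r)\nabla\tilde U(r,\tilde{x}_{n\gamma})+\vec{\Delta}(\nabla U)(\tilde{x}_r)$ plus a stochastic integral $\sqrt{2}\int_{n\gamma}^t \nabla^2 U(\tilde{x}_r)\,dw_r$. Then, by adding and subtracting the frozen coefficients at $\tilde{x}_{n\gamma}$, the quantity $\nabla U(\tilde{x}_t)-\nabla U(\tilde{x}_{n\gamma})-\nabla U_1(t,\tilde{x}_{n\gamma})-\nabla U_2(t,\tilde{x}_{n\gamma})$ splits (exactly as in \eqref{rateeqn3}) into four leading terms:
\begin{align*}
H_1(t) &= -\int_{n\gamma}^t\bigl(\nabla^2 U(\tilde{x}_r)-\nabla^2 U(\tilde{x}_{n\gamma})\bigr)\nabla U(\tilde{x}_{n\gamma})\,dr,\\
H_2(t) &= -\int_{n\gamma}^t \nabla^2 U(\tilde{x}_r)\bigl(\nabla U_1(r,\tilde{x}_{n\gamma})+\nabla U_2(r,\tilde{x}_{n\gamma})\bigr)\,dr,\\
H_3(t) &= \sqrt{2}\int_{n\gamma}^t\bigl(\nabla^2 U(\tilde{x}_r)-\nabla^2 U(\tilde{x}_{n\gamma})\bigr)\,dw_r,\\
H_4(t) &= \int_{n\gamma}^t\bigl(\vec{\Delta}(\nabla U)(\tilde{x}_r)-\vec{\Delta}(\nabla U)(\tilde{x}_{n\gamma})\bigr)\,dr.
\end{align*}
Squaring and taking conditional expectation then gives an upper bound by $4\sum_{i=1}^4\mathbb{E}^{\mathscr{F}_{n\gamma}}|H_i(t)|^2$.

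Next, I would estimate each $H_i$ separately. For $H_1$, Cauchy-Schwarz in time, the Lipschitz bound \ref{h6} for $\nabla^2 U$, and the bound $|\nabla U(\tilde{x}_{n\gamma})|\le L_1|\tilde{x}_{n\gamma}-x^{\ast}|$ from \ref{h5} (using $\nabla U(x^{\ast})=0$), combined with the second-moment estimate for $|\tilde{x}_r-\tilde{x}_{n\gamma}|^2$ from Lemma \ref{liprate2}, give a bound of order $\gamma^3$ times a quadratic in $|\tilde{x}_{n\gamma}-x^{\ast}|^2$. For $H_2$, Cauchy-Schwarz plus $|\nabla^2 U|\le L_1$ from \eqref{usefulineq}, together with the second-moment bounds of $\nabla U_1$ and $\nabla U_2$ from Lemma \ref{liprate1}, produce a bound of the form $\gamma^3\cdot(\text{quadratic in }|\tilde{x}_{n\gamma}-x^{\ast}|^2+d^2)$. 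For $H_3$, It\^o's isometry combined with \ref{h6} and Lemma \ref{liprate2} gives $C\gamma^2$ times a quadratic in $|\tilde{x}_{n\gamma}-x^{\ast}|^2$. Finally, for $H_4$, since $\vec{\Delta}(\nabla U)^{(i)}=\sum_u\partial^2_{uu}(\nabla U)^{(i)}$ and \ref{h7} gives $L$-Lipschitzness of each $\nabla^2(\nabla U)^{(i)}$, one gets $|\vec{\Delta}(\nabla U)(x)-\vec{\Delta}(\nabla U)(y)|\le d^{3/2}L|x-y|$, and then Cauchy-Schwarz plus Lemma \ref{liprate2} produces a bound of order $\gamma^3 d^3\cdot(\text{linear in }|\tilde{x}_{n\gamma}-x^{\ast}|^2+d^2)$. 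Summing all four contributions yields the claimed form $\gamma^2(c_5|\tilde{x}_{n\gamma}-x^{\ast}|^4+c_6|\tilde{x}_{n\gamma}-x^{\ast}|^2+c_7)$, where the $|\cdot|^4$ term comes from the $H_2$ estimate once the quartic bound on $|\nabla U_1|^2$ from Lemma \ref{liprate1} is used together with Cauchy-Schwarz (or, equivalently, from squaring a quadratic expression in $|\tilde{x}_{n\gamma}-x^{\ast}|^2$).

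The main conceptual steps are routine, but the main book-keeping obstacle is keeping track of the exact constants $c_5,c_6,c_7$ and in particular their dimension dependence: one must be careful to apply Cauchy-Schwarz in the tightest way so that the dominant contributions ($H_2$ and $H_4$) do not blow up the dimension beyond $O(d^3)$ or $O(d^4)$, since this lemma ultimately feeds into the $\bar C=O(d^4)$ bound in Theorem \ref{thmwassersteinlip}. In particular, the $H_4$ term, where the $d^{3/2}$ factor in the Lipschitz constant of $\vec{\Delta}(\nabla U)$ gets squared, is the one that is most delicate in terms of the dimensional scaling and needs to be paired with the $|\tilde{x}_r-\tilde{x}_{n\gamma}|^2$ estimate from Lemma \ref{liprate2} rather than a cruder bound, to keep the final exponent of $d$ at the advertised level.
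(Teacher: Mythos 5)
Your proposal follows the paper's proof essentially verbatim: the same It\^o decomposition of $\nabla U(\tilde{x}_t)-\nabla U(\tilde{x}_{n\gamma})-\nabla U_1-\nabla U_2$ into four integrals, then the same term-by-term estimates via Cauchy--Schwarz, \ref{h5}--\ref{h7}, and Lemmas \ref{liprate1}--\ref{liprate2}. One small factual slip in your narrative: the $|\tilde{x}_{n\gamma}-x^{\ast}|^4$ contribution $c_5$ actually arises from $H_1$, not $H_2$ --- there, $|\nabla U(\tilde{x}_{n\gamma})|^2\le L_1^2|\tilde{x}_{n\gamma}-x^{\ast}|^2$ is multiplied against $\mathbb{E}^{\mathscr{F}_{n\gamma}}|\tilde{x}_r-\tilde{x}_{n\gamma}|^2\le\gamma(c_1|\tilde{x}_{n\gamma}-x^{\ast}|^2+c_2)$, and the product of the two quadratic-in-$|\tilde{x}_{n\gamma}-x^{\ast}|$ factors is what produces the quartic. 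In the paper's $\bar{G}_2$ estimate only the second moments of $\nabla U_1,\nabla U_2$ from Lemma \ref{liprate1} are used, giving a term that is merely linear in $|\tilde{x}_{n\gamma}-x^{\ast}|^2$; the fourth-moment bound on $\nabla U_1$ is not invoked. This is a bookkeeping misattribution rather than a gap in the argument, since your own $H_1$ description already records the quartic dependence correctly.
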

\begin{proof} For any $t \in [n\gamma, (n+1)\gamma)$, applying It\^o's formula to $\nabla U(\tilde{x}_t)- \nabla U(\tilde{x}_{n\gamma})$ gives, almost surely
\begin{align} \label{lipito}
\begin{split}
& \nabla U(\tilde{x}_t)- \nabla U(\tilde{x}_{n\gamma})- \nabla U_1(t,\tilde{x}_{n\gamma})-\nabla U_2(t,\tilde{x}_{n\gamma})	\\
									& = -\int_{n\gamma}^t\left(\nabla^2 U(\tilde{x}_r)-\nabla^2 U(\tilde{x}_{n\gamma})\right)\nabla U (\tilde{x}_{n\gamma})\,dr \\
									&\quad  -\int_{n\gamma}^t\nabla^2U(\tilde{x}_r)\left(\nabla U_1(r,\tilde{x}_{n\gamma})+\nabla U_2(r,\tilde{x}_{n\gamma})\right)\,dr\\
									& \quad +\sqrt{2}\int_{n\gamma}^t\left(\nabla^2 U(\tilde{x}_r)-\nabla^2 U(\tilde{x}_{n\gamma})\right)\,dw_r\\
									&\quad +\int_{n\gamma}^t\left(\vec{\Delta}(\nabla U)(\tilde{x}_r)-\vec{\Delta}(\nabla U)(\tilde{x}_{n\gamma})\right)\,dr  
\end{split}
\end{align}
Then, squaring both sides and taking conditional expectation gives
\begin{equation}\label{4termsJ}
 \mathbb{E}^{\mathscr{F}_{n\gamma}}\left[ \left| \nabla U(\tilde{x}_t)- \nabla U(\tilde{x}_{n\gamma})- \nabla U_1(t,\tilde{x}_{n\gamma})-\nabla U_2(t,\tilde{x}_{n\gamma})\right|^2\right] \leq 4\sum_{i=1}^4 \bar{G}_i(t).
\end{equation}
By using Cauchy-Schwarz inequality, \ref{h5}, \ref{h6} and Lemma \ref{liprate2}, one obtains
\begin{align*} 
\begin{split}
\bar{G}_1(t)	& \leq \gamma\int_{n\gamma}^t\mathbb{E}^{\mathscr{F}_{n\gamma}}\left[|(\nabla^2 U(\tilde{x}_r)-\nabla^2 U(\tilde{x}_{n\gamma}))\nabla U (\tilde{x}_{n\gamma})|^2\right]\,dr\\
	& \leq  \gamma L_1^2L_2^2|\tilde{x}_{n\gamma}-x^{\ast}|^2\int_{n\gamma}^t\mathbb{E}^{\mathscr{F}_{n\gamma}}|\tilde{x}_r-\tilde{x}_{n\gamma}|^2\,dr\\
	& \leq \gamma^3 (c_1L_1^2L_2^2|\tilde{x}_{n\gamma}-x^{\ast}|^4+c_2L_1^2L_2^2|\tilde{x}_{n\gamma}-x^{\ast}|^2).
\end{split}
\end{align*}
Similarly, by Cauchy-Schwarz inequality, \eqref{usefulineq} and Lemma \ref{liprate1}, we have
\begin{align*}
\begin{split}
\bar{G}_2(t)	& \leq \gamma\int_{n\gamma}^t\mathbb{E}^{\mathscr{F}_{n\gamma}}\left[|\nabla^2U(\tilde{x}_r)(\nabla U_1(r,\tilde{x}_{n\gamma})+\nabla U_2(r,\tilde{x}_{n\gamma}))|^2\right]\,dr\\
	& \leq 2\gamma L_1^2\int_{n\gamma}^t\mathbb{E}^{\mathscr{F}_{n\gamma}}\left[|\nabla U_1(r,\tilde{x}_{n\gamma})|^2+|\nabla U_2(r,\tilde{x}_{n\gamma})|^2\right]\,dr\\
	& \leq 2\gamma^2 L_1^2 (2\gamma^2(L_1^4|\tilde{x}_{n\gamma}-x^{\ast}|^2+d^2L_2^2) + 2\gamma d L_1^2) \\
	& \leq \gamma^3(4\gamma L_1^6|\tilde{x}_{n\gamma}-x^{\ast}|^2 + 4\gamma L_1^2L_2^2d^2 +4dL_1^4).
\end{split}
\end{align*}
Moreover, applying Cauchy-Schwarz inequality, \ref{h6} and Lemma \ref{liprate2} yields
\begin{align*}
\begin{split}
\bar{G}_3(t)	& = 2\int_{n\gamma}^t\mathbb{E}^{\mathscr{F}_{n\gamma}}\left[|\nabla^2 U(\tilde{x}_r)-\nabla^2 U(\tilde{x}_{n\gamma})|^2_{\mathsf{F}}\right]\,dr\\
	& \leq  2dL_2^2\int_{n\gamma}^t\mathbb{E}^{\mathscr{F}_{n\gamma}}|\tilde{x}_r-\tilde{x}_{n\gamma}|^2\,dr\\
	& \leq  \gamma^2 (2L_2^2dc_1|\tilde{x}_{n\gamma}-x^{\ast}|^2 +2L_2^2dc_2).
\end{split}
\end{align*}
Furthermore, one obtains by using Cauchy-Schwarz inequality, \ref{h7} and Lemma \ref{liprate2}
\begin{align*}
\begin{split}
\bar{G}_4(t)	& \leq \gamma\int_{n\gamma}^t\mathbb{E}^{\mathscr{F}_{n\gamma}}\left[\left|\vec{\Delta}(\nabla U)(\tilde{x}_r)-\vec{\Delta}(\nabla U)(\tilde{x}_{n\gamma})\right|^2\right]\,dr\\
	& \leq  d^{3/2}L\gamma\int_{n\gamma}^t\mathbb{E}^{\mathscr{F}_{n\gamma}}|\tilde{x}_r-\tilde{x}_{n\gamma}|^2\,dr\\
	& \leq  \gamma^3 (d^{3/2}Lc_1|\tilde{x}_{n\gamma}-x^{\ast}|^2 +d^{3/2}Lc_2).
\end{split}
\end{align*}
The proof completes by substituting all the estimates above into \eqref{4termsJ}, i.e.
\begin{align*}
&\mathbb{E}^{\mathscr{F}_{n\gamma}}\left[|\nabla U(\tilde{x}_t)- \nabla U(\tilde{x}_{n\gamma})- \nabla U_1(t,\tilde{x}_{n\gamma})-\nabla U_2(t,\tilde{x}_{n\gamma})|^2\right]\\
& \leq \gamma^2(c_5|\tilde{x}_{n\gamma}-x^{\ast}|^4+c_6|\tilde{x}_{n\gamma}-x^{\ast}|^2+c_7),
\end{align*}
where $c_5 =4c_1  L_1^2L_2^2$, $c_6 =4(L_1^2L_2^2c_2 +Lc_1d^{3/2}+2L_2^2c_1d+4L_1^6)$ and $c_7 =4(Lc_2d^{3/2} +2L_2^2c_2d+4 L_1^2L_2^2d^2+4L_1^4d)$.
\end{proof}    
\begin{lemma}\label{lipproblematic}
Assume \ref{h4} - \ref{h7} are satisfied. Let  $\gamma \in \left(0,\frac{1}{\tilde{m}} \wedge  \frac{8\tilde{m}^2}{m( 2L_1^2+7\tilde{m}L_1)}\right)$. Then, for all $n \in \mathbb{N}$, and $t \in [n\gamma, (n+1)\gamma)$,
\begin{align*}
&  \mathbb{E}^{\mathscr{F}_{n\gamma}}\left[\int_{n\gamma}^t(\nabla U(x_r)-\nabla U(\tilde{x}_r))\,dr\int_{n\gamma}^t(\nabla^2 U(\tilde{x}_r)-\nabla^2 U(\tilde{x}_{n\gamma}))\,dw_r\right]  \\
&   \leq \gamma^3( c_8|x_{n\gamma}-x^{\ast}|^2+c_9|\tilde{x}_{n\gamma}-x^{\ast}|^2+c_{10}),
\end{align*}
where the constants $c_8, c_9$ and $c_{10}$ are given explicitly in the proof.
\end{lemma}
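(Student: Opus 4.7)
\textbf{Proof proposal for Lemma \ref{lipproblematic}.} The strategy mirrors that of Lemma~\ref{problematicterm} in the local Lipschitz setting, but the Taylor matrix $M$ and Lemma~\ref{mvt} can be dispensed with thanks to the global Lipschitz conditions \ref{h5}--\ref{h7}. The plan is first to decompose the inner factor by applying It\^o's formula to both $\nabla U(x_r)$ and $\nabla U(\tilde x_r)$, giving
\begin{align*}
\nabla U(x_r)-\nabla U(\tilde x_r)
&= [\nabla U(x_{n\gamma})-\nabla U(\tilde x_{n\gamma})]\\
&\quad{}+ \sqrt{2}\int_{n\gamma}^r[\nabla^2U(x_{n\gamma})-\nabla^2U(\tilde x_{n\gamma})]\,dw_s + R_r,
\end{align*}
where $R_r$ collects the bounded-variation drift contributions and the higher-order martingale terms $\sqrt 2\int_{n\gamma}^r[\nabla^2U(x_s)-\nabla^2U(x_{n\gamma})]\,dw_s - \sqrt 2\int_{n\gamma}^r[\nabla^2U(\tilde x_s)-\nabla^2U(\tilde x_{n\gamma})]\,dw_s$. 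Substitution produces three pieces, which I will treat separately.

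The first piece carries the $\mathscr{F}_{n\gamma}$-measurable factor $(t-n\gamma)[\nabla U(x_{n\gamma})-\nabla U(\tilde x_{n\gamma})]$; it pairs with an It\^o integral of zero conditional mean and hence vanishes. The remainder piece containing $R_r$ is handled by direct Cauchy--Schwarz: It\^o isometry combined with \ref{h6} and Lemma~\ref{liprate2} gives $\mathbb{E}^{\mathscr{F}_{n\gamma}}|\int_{n\gamma}^t[\nabla^2U(\tilde x_r)-\nabla^2U(\tilde x_{n\gamma})]\,dw_r|^2 = O(\gamma^2)$, while a term-by-term estimate (Cauchy--Schwarz on the drift summand of $R_r$; It\^o isometry together with \ref{h6} on each martingale summand), combined with Lemma~\ref{liprate2}, yields $\mathbb{E}^{\mathscr{F}_{n\gamma}}|R_r|^2 = O((r-n\gamma)^2)$ and therefore $\mathbb{E}^{\mathscr{F}_{n\gamma}}|\int_{n\gamma}^t R_r\,dr|^2 = O(\gamma^4)$, all with polynomial dependence on $|x_{n\gamma}-x^{*}|^2$ and $|\tilde x_{n\gamma}-x^{*}|^2$. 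Cauchy--Schwarz then delivers the required $O(\gamma^3)$ bound for this piece.

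The middle (``problematic'') piece is the delicate one. Setting $A:=\nabla^2U(x_{n\gamma})-\nabla^2U(\tilde x_{n\gamma})$ and $B_r:=\nabla^2U(\tilde x_r)-\nabla^2U(\tilde x_{n\gamma})$, stochastic Fubini gives $\sqrt 2\int_{n\gamma}^t\int_{n\gamma}^rA\,dw_s\,dr = \sqrt 2\int_{n\gamma}^t(t-r)A\,dw_r$, and the vector-valued It\^o isometry reduces the corresponding expectation to $\sqrt 2\int_{n\gamma}^t(t-r)\,\mathrm{tr}(A^{\mathsf T}\mathbb{E}^{\mathscr{F}_{n\gamma}}[B_r])\,dr$, where $A$ has been pulled outside the conditional expectation by $\mathscr{F}_{n\gamma}$-measurability. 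The key step is now a second application of It\^o's formula, this time entrywise to $\nabla^2U^{(i,j)}(\tilde x_r)$, which annihilates the martingale part of $B_r$ in conditional expectation and leaves only an $O(r-n\gamma)$ drift:
\[
\mathbb{E}^{\mathscr{F}_{n\gamma}}[B_r^{(i,j)}] = \int_{n\gamma}^r\mathbb{E}^{\mathscr{F}_{n\gamma}}\big[\nabla(\nabla^2U^{(i,j)})(\tilde x_s)\cdot\mu_s + \Delta(\nabla^2U^{(i,j)})(\tilde x_s)\big]\,ds,
\]
with $\mu_s=-\nabla\tilde U(s,\tilde x_{n\gamma})$. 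Using \ref{h6} to bound $|\nabla(\nabla^2U^{(i,j)})|\leq\sqrt dL_2$, \ref{h7} to bound $|\Delta(\nabla^2U^{(i,j)})|\leq dL$, \ref{h5} for $|\nabla U(\tilde x_{n\gamma})|\leq L_1|\tilde x_{n\gamma}-x^{*}|$, and Lemma~\ref{liprate1} for the $\nabla U_1,\nabla U_2$ pieces of $\mu_s$, one obtains $|\mathbb{E}^{\mathscr{F}_{n\gamma}}[B_r]|_{\mathsf F}\leq C(r-n\gamma)(d^{3/2}L_1L_2|\tilde x_{n\gamma}-x^{*}|+D)$ for an explicit dimensional constant $D$. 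Paired with $|A|_{\mathsf F}\leq\sqrt d L_2|x_{n\gamma}-\tilde x_{n\gamma}|$, Cauchy--Schwarz on the Frobenius inner product, and $\int_{n\gamma}^t(t-r)(r-n\gamma)\,dr = (t-n\gamma)^3/6\leq\gamma^3/6$, the middle piece is bounded by $C\gamma^3|x_{n\gamma}-\tilde x_{n\gamma}|(L_1L_2|\tilde x_{n\gamma}-x^{*}|+D)$; a final AM--GM together with $|x_{n\gamma}-\tilde x_{n\gamma}|^2\leq 2|x_{n\gamma}-x^{*}|^2+2|\tilde x_{n\gamma}-x^{*}|^2$ produces the stated form and extracts $c_8,c_9,c_{10}$.

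The principal obstacle is precisely this middle piece: a naive Cauchy--Schwarz bound using only $\mathbb{E}^{\mathscr{F}_{n\gamma}}|\int(t-r)A\,dw_r|^2 = O(\gamma^3|A|^2)$ and $\mathbb{E}^{\mathscr{F}_{n\gamma}}|\int B_r\,dw_r|^2=O(\gamma^2)$ yields only $O(\gamma^{5/2})$. The resolution is to combine the $\mathscr{F}_{n\gamma}$-measurability of $A$ with a second It\^o's formula on $\nabla^2U(\tilde x_r)$, forcing $\mathbb{E}^{\mathscr{F}_{n\gamma}}[B_r]$ to scale as $O(r-n\gamma)$; the $(t-r)$ weight supplied by the stochastic Fubini then furnishes the missing $\sqrt\gamma$ factor and returns the claimed $\gamma^3$ rate.
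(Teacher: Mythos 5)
Your proposal is correct and follows essentially the same route as the paper's proof: the same decomposition (with the $\mathscr{F}_{n\gamma}$-measurable first-order term dropping out by martingale orthogonality, the $R_r$ remainder handled by Cauchy--Schwarz, and It\^o's formula applied to $\nabla^2 U(\tilde{x}_r)$ to neutralise the dangerous martingale cross-term in the middle piece). The only stylistic difference is that you compute the middle piece as the deterministic cross-variation $\int_{n\gamma}^t(t-r)\,\mathrm{tr}\bigl(A^{\mathsf{T}}\mathbb{E}^{\mathscr{F}_{n\gamma}}[B_r]\bigr)\,dr$ via stochastic Fubini and It\^o isometry, whereas the paper separates the two factors with Young's inequality and bounds each $L^2$ norm separately --- two equivalent ways of exploiting the same cancellation.
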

\begin{proof}
The proof follows the same lines as in Lemma \ref{problematicterm} with $\int_{n\gamma}^tM(\tilde{x}_r, \tilde{x}_{n\gamma})\,dw_r$ replaced by $\int_{n\gamma}^t(\nabla^2 U(\tilde{x}_r)-\nabla^2 U(\tilde{x}_{n\gamma}))\,dw_r$, thus, the main focus here is to provide explicit constants. For any $t \in [n\gamma, (n+1)\gamma)$, one observes that
\begin{align}\label{problematiclip}
\begin{split}
&\mathbb{E}^{\mathscr{F}_{n\gamma}}\left[\int_{n\gamma}^t(\nabla U(x_r)-\nabla U(\tilde{x}_r))\,dr\int_{n\gamma}^t(\nabla^2 U(\tilde{x}_r)-\nabla^2 U(\tilde{x}_{n\gamma}))\,dw_r\right] \\
& = \mathbb{E}^{\mathscr{F}_{n\gamma}}\left[\int_{n\gamma}^t\left\{\nabla U(x_r)-\nabla U(x_{n\gamma})-(\nabla U(\tilde{x}_r)-\nabla U(\tilde{x}_{n\gamma}))\right. \right.\\
&\hspace{5em}\left.\left. - \sqrt{2}\int_{n\gamma}^r\nabla^2 U(x_{n\gamma}) \,dw_s + \sqrt{2}\int_{n\gamma}^r\nabla^2 U(\tilde{x}_{n\gamma}) \,dw_s\right\}\,dr \right.\\
&\hspace{15em}\left. \times \int_{n\gamma}^t(\nabla^2 U(\tilde{x}_r)-\nabla^2 U(\tilde{x}_{n\gamma}))\,dw_r\right]\\
&\quad+  \sqrt{2}\mathbb{E}^{\mathscr{F}_{n\gamma}}\left[\int_{n\gamma}^t\int_{n\gamma}^r(\nabla^2 U(x_{n\gamma})- \nabla^2U(\tilde{x}_{n\gamma})) \,dw_s\,dr\right.\\
&\hspace{15em} \left. \times \int_{n\gamma}^t(\nabla^2 U(\tilde{x}_r)-\nabla^2 U(\tilde{x}_{n\gamma}))\,dw_r\right].
\end{split}
\end{align}
By applying It\^o's formula to $\nabla^2 U(\tilde{x}_r)$, the second term in \eqref{problematiclip} can be estimated as
\begin{align}\label{lipintres}
& \sqrt{2} \mathbb{E}^{\mathscr{F}_{n\gamma}}\left[\int_{n\gamma}^t\int_{n\gamma}^r(\nabla^2 U(x_{n\gamma})- \nabla^2U(\tilde{x}_{n\gamma})) \,dw_s\,dr \right.\nonumber\\
&\hspace{15em}\left.\int_{n\gamma}^t(\nabla^2 U(\tilde{x}_r)-\nabla^2 U(\tilde{x}_{n\gamma}))\,dw_r\right] \nonumber\\
& = \sqrt{2} \mathbb{E}^{\mathscr{F}_{n\gamma}}\left[\sum_{i=1}^d \left(\int_{n\gamma}^t\sum_{l=1}^d\int_{n\gamma}^r(\nabla^2 U^{(i,l)}(x_{n\gamma})- \nabla^2U^{(i,l)}(\tilde{x}_{n\gamma})) \,dw^{(l)}_s\,dr \right)\right. \nonumber\\
&\hspace{1em} \times \left( \sum_{j=1}^d\int_{n\gamma}^t\left(-\int_{n\gamma}^r\sum_{k=1}^d \frac{\partial^3 U(\tilde{x}_s)}{\partial x^{(i)} \partial x^{(j)}\partial x^{(k)}}\nabla\tilde{U}^{(k)} (s,\tilde{x}_{n\gamma})\,ds\right. \right. \nonumber\\
&\left. \left. \left. +\int_{n\gamma}^r\sum_{k=1}^d \frac{\partial^4 U(\tilde{x}_s)}{\partial x^{(i)} \partial x^{(j)}\partial x^{(k)}\partial x^{(k)}} \,ds +\sqrt{2}\int_{n\gamma}^r\sum_{k=1}^d \frac{\partial^3 U(\tilde{x}_s)}{\partial x^{(i)} \partial x^{(j)}\partial x^{(k)}} \,dw^{(k)}_s\right)\,dw^{(j)}_r\right)\right] \nonumber\\
& \leq \frac{1}{2} \sum_{i=1}^d \mathbb{E}^{\mathscr{F}_{n\gamma}} \left| \int_{n\gamma}^t\sum_{l=1}^d\int_{n\gamma}^r(\nabla^2 U^{(i,l)}(x_{n\gamma})- \nabla^2U^{(i,l)}(\tilde{x}_{n\gamma})) \,dw^{(l)}_s\,dr \right|^2 \nonumber \\
&\hspace{1em} +\sum_{i=1}^d\mathbb{E}^{\mathscr{F}_{n\gamma}} \left|\sum_{j=1}^d\int_{n\gamma}^t \left(-\int_{n\gamma}^r\sum_{k=1}^d \frac{\partial^3 U(\tilde{x}_s)}{\partial x^{(i)} \partial x^{(j)}\partial x^{(k)}}\nabla\tilde{U}^{(k)} (s,\tilde{x}_{n\gamma})\,ds \right.\right.\nonumber\\
&\hspace{15em} \left.\left.+\int_{n\gamma}^r\sum_{k=1}^d \frac{\partial^4 U(\tilde{x}_s)}{\partial x^{(i)} \partial x^{(j)}\partial x^{(k)}\partial x^{(k)}} \,ds \right)\,dw^{(j)}_r\right|^2\nonumber\\
&\leq 2L_2^2|x_{n\gamma}-x^{\ast}|^2d\gamma^3 \nonumber\\
&\quad+ \gamma^3((2L_2^2+6L_1^2L_2^2+12 L_1^4L_2^2)d|\tilde{x}_{n\gamma}-x^{\ast}|^2+2L^2d^4 +12 L_2^4d^3+12 L_1^2L_2^2d^2).
\end{align}
where the first inequality holds due to Young's inequality and the fact that for any $i,l,j,k = 1, \dots, d$
\begin{align*}
& \mathbb{E}^{\mathscr{F}_{n\gamma}}\left[ \int_{n\gamma}^t\int_{n\gamma}^r(\nabla^2 U^{(i,l)}(x_{n\gamma})- \nabla^2U^{(i,l)}(\tilde{x}_{n\gamma})) \,dw^{(l)}_s\,dr \right.\\
 &\hspace{10em}\left. \int_{n\gamma}^t \int_{n\gamma}^r\sqrt{2} \frac{\partial^3 U(\tilde{x}_s)}{\partial x^{(i)} \partial x^{(j)}\partial x^{(k)}} \,dw^{(k)}_s\,dw^{(j)}_r\right]=0,
\end{align*}
while the last inequality holds due to Young's inequality, results in Appendix \ref{liprateMmvt} and \ref{fourthbd}, and Lemma \ref{liprate1}. By using Cauchy-Schwarz inequality, \eqref{problematiclip} becomes
\begin{align*}
& \mathbb{E}^{\mathscr{F}_{n\gamma}}\left[\int_{n\gamma}^t(\nabla U(x_r)-\nabla U(\tilde{x}_r))\,dr\int_{n\gamma}^t(\nabla^2 U(\tilde{x}_r)-\nabla^2 U(\tilde{x}_{n\gamma}))\,dw_r\right] \\
& \leq \sqrt{\mathbb{E}^{\mathscr{F}_{n\gamma}}\left|\int_{n\gamma}^t(\nabla^2 U(\tilde{x}_r)-\nabla^2 U(\tilde{x}_{n\gamma}))\,dw_r\right|^2} \left(\mathbb{E}^{\mathscr{F}_{n\gamma}}\left[\gamma \int_{n\gamma}^t|\nabla U(x_r)-\nabla U(x_{n\gamma})\right.\right.\\
&\left.\left. -(\nabla U(\tilde{x}_r)-\nabla U(\tilde{x}_{n\gamma}))- \sqrt{2}\int_{n\gamma}^r\nabla^2 U(x_{n\gamma}) \,dw_s+ \sqrt{2}\int_{n\gamma}^r\nabla^2 U(\tilde{x}_{n\gamma})\,dw_s|^2\,dr\right]\right)^{1/2}\\
& \quad +2L_2^2|x_{n\gamma}-x^{\ast}|^2d\gamma^3\\
&\quad +\gamma^3((2L_2^2+6L_1^2L_2^2+12 L_1^4L_2^2)d|\tilde{x}_{n\gamma}-x^{\ast}|^2+2L^2d^4 +12 L_2^4d^3  +12 L_1^2L_2^2d^2).
\end{align*}
Then, to estimate the first term of \eqref{problematiclip}, one applies It\^o's formula to $\nabla U(x_r)-\nabla U(x_{n\gamma})$ and $\nabla U(\tilde{x}_r)- \nabla U(\tilde{x}_{n\gamma})$ to obtain, almost surely
\begin{align*}
 & \left(\mathbb{E}^{\mathscr{F}_{n\gamma}}\left[\gamma \int_{n\gamma}^t|\nabla U(x_r)-\nabla U(x_{n\gamma})-(\nabla U(\tilde{x}_r)-\nabla U(\tilde{x}_{n\gamma}))\right.\right.\\
&\hspace{8em}\left.\left.- \sqrt{2}\int_{n\gamma}^r\nabla^2 U(x_{n\gamma}) \,dw_s+ \sqrt{2}\int_{n\gamma}^r\nabla^2 U(\tilde{x}_{n\gamma})\,dw_s|^2\,dr\right]\right)^{1/2}\\ 
& \leq 2  \left(\mathbb{E}^{\mathscr{F}_{n\gamma}}\left[\gamma^2 \int_{n\gamma}^t \int_{n\gamma}^r \left|\nabla^2U(x_s) \nabla U(x_s) - \vec{\Delta}(\nabla U)(x_s)\right|^2\,ds\,dr\right]\right.\\
& \hspace{3em} + \mathbb{E}^{\mathscr{F}_{n\gamma}}\left[\gamma^2 \int_{n\gamma}^t \int_{n\gamma}^r \left| \nabla^2 U(\tilde{x}_s) \nabla \tilde{U} (s, \tilde{x}_{n\gamma}) - \vec{\Delta}(\nabla U)(\tilde{x}_s)\right|^2\,ds\,dr\right] \\
& \hspace{3em} +2d\gamma \int_{n\gamma}^t \int_{n\gamma}^r\mathbb{E}^{\mathscr{F}_{n\gamma}}\left[ \left|\nabla^2 U(x_s)-\nabla^2 U(x_{n\gamma})\right|^2\right]\,ds\,dr \\
& \hspace{3em} \left.+2d\gamma \int_{n\gamma}^t \int_{n\gamma}^r \mathbb{E}^{\mathscr{F}_{n\gamma}}\left[ \left|\nabla^2 U(\tilde{x}_s)-\nabla^2 U(\tilde{x}_{n\gamma})\right|^2\right]\,ds\,dr\right)^{1/2}\\
& \leq 2  \left(\gamma^2 \int_{n\gamma}^t \int_{n\gamma}^r \mathbb{E}^{\mathscr{F}_{n\gamma}}\left[2L_1^4|x_s-x^{\ast}|^2+2L_2^2d^2\right]\,ds\,dr\right.\\
& \hspace{3em} + \gamma^2 \int_{n\gamma}^t \int_{n\gamma}^r \mathbb{E}^{\mathscr{F}_{n\gamma}}\left[2L_1^2\left| \nabla \tilde{U} (s, \tilde{x}_{n\gamma}) \right|^2+2L_2^2d^2\right]\,ds\,dr \\
& \hspace{3em} +2d\gamma \int_{n\gamma}^t \int_{n\gamma}^r\mathbb{E}^{\mathscr{F}_{n\gamma}}\left[ L_2^2\left|x_s -x_{n\gamma}\right|^2\right]\,ds\,dr \\
& \hspace{3em} \left. +2d\gamma \int_{n\gamma}^t \int_{n\gamma}^r \mathbb{E}^{\mathscr{F}_{n\gamma}}\left[ L_2^2\left| \tilde{x}_s - \tilde{x}_{n\gamma} \right|^2\right]\,ds\,dr\right)^{1/2}\\
& \leq 2\gamma^2 \left(2L_1^4|x_{n\gamma}-x^{\ast}|^2+4\gamma L_1^4d+2L_2^2d^2\right.\\
& \hspace{3em} + (6L_1^4+12\gamma^2L_1^6)|\tilde{x}_{n\gamma}-x^{\ast}|^2 +12\gamma^2L_1^2L_2^2d^2+12\gamma L_1^4d+2L_2^2d^2 \\
& \hspace{3em} +2d \left(2\gamma L_1^2L_2^2|x_{n\gamma}-x^{\ast}|^2  +4\gamma^2L_1^2L_2^2d+4L_2^2d\right)\\
& \hspace{3em} \left. + 2d \left( L_2^2c_1 |\tilde{x}_{n\gamma}-x^{\ast}|^2+L_2^2c_2\right)\right)^{1/2},
\end{align*}
where the first inequality holds due to Cauchy-Schwarz inequality and Young's inequality, the second inequality holds by using \eqref{usefulineq} and \ref{h6}, while the last inequality is obtained due to Lemma \ref{liprate1} and \ref{liprate2}. Finally by using Young's inequality,  
one obtains
\begin{align*}
&  \mathbb{E}^{\mathscr{F}_{n\gamma}}\left[\int_{n\gamma}^t(\nabla U(x_r)-\nabla U(\tilde{x}_r))\,dr\int_{n\gamma}^t(\nabla^2 U(\tilde{x}_r)-\nabla^2 U(\tilde{x}_{n\gamma}))\,dw_r\right]  \\
&   \leq \gamma^3( c_8|x_{n\gamma}-x^{\ast}|^2+c_9|\tilde{x}_{n\gamma}-x^{\ast}|^2+c_{10}),
\end{align*}
where $c_8 =2L_1^4+4L_1^2 L_2^2d +2L_2^2d$, $c_9 =(4L_2^2c_1+2L_2^2+6L_1^2L_2^2+12 L_1^4L_2^2)d+ 6L_1^4+12L_1^6$ and $c_{10}=2L^2d^4 +4L_2^2c_2d+12 L_2^4d^3 +32 L_1^2L_2^2d^2  +12L_2^2d^2+16 L_1^4d $.
\end{proof}
\noindent \textbf{Proof of Theorem \ref{thmwassersteinlip}.}  Note that in the Lipschitz case, there are restractions for the stepsize $\gamma \in \left(0,\frac{1}{\tilde{m}} \wedge  \frac{8\tilde{m}^2}{m( 2L_1^2+7\tilde{m}L_1)}\right)$. Consider the synchronous coupling of $x_t$ and $\tilde{x}_t$ for $t \geq 0$, where $\tilde{x}_t$ is defined by \eqref{lipscheme}. 
Let $(x_0, \tilde{x}_0)$ distributed according to $\zeta_0$, where $\zeta_0 = \pi \otimes \delta_x$ for all $x \in \mathbb{R}^d$. Define $e_t =x_t-\tilde{x}_t$, for all $t \in [n\gamma, (n+1)\gamma)$, $n \in \mathbb{N}$. By It\^o's formula, one obtains, almost surely,
\begin{align*}
|e_t|^2	& = |e_{n\gamma}|^2 -2\int_{n\gamma}^{t} e_s( \nabla U(x_s)-\nabla\tilde{ U} (s,\tilde{x}_{n\gamma}))\,ds.
\end{align*}
Then, taking the expectation and taking the derivative on both sides yield
\begin{align*}
\frac{d}{dt} \mathbb{E}\left[|e_t|^2\right] 	& = -2 \mathbb{E} \left[e_t( \nabla U(x_t)-\nabla\tilde{ U} (t,\tilde{x}_{n\gamma}))\right]\\
														& = 2\mathbb{E} \left[e_t(- (\nabla U(x_t)- \nabla U(\tilde{x}_t)))\right]\\
														& \hspace{1em} +2\mathbb{E} \left[e_t(-( \nabla U(\tilde{x}_t)- \nabla U(\tilde{x}_{n\gamma})- \nabla U_1(t,\tilde{x}_{n\gamma})-\nabla U_2(t,\tilde{x}_{n\gamma})))\right].
\end{align*}
By applying It\^o's formula to $\nabla U(\tilde{x}_t)- \nabla U(\tilde{x}_{n\gamma})$, and by calculating $\nabla U(\tilde{x}_t)- \nabla U(\tilde{x}_{n\gamma})- \nabla U_1(t,\tilde{x}_{n\gamma})-\nabla U_2(t,\tilde{x}_{n\gamma})$, one obtains \eqref{lipito}. Substituing \eqref{lipito} into the above equation and by using \ref{h4} yield
\begin{align} \label{proofthm3u1}
\frac{d}{dt} \mathbb{E} \left[|e_t|^2\right] 
														& \leq  -2m\mathbb{E} \left[|e_t|^2\right]\nonumber \\
														&  \hspace{1em} +2\mathbb{E} \left[|e_t||\int_{n\gamma}^t(\nabla^2 U(\tilde{x}_r)-\nabla^2 U(\tilde{x}_{n\gamma}))\nabla U_{\gamma}(\tilde{x}_{n\gamma})\,dr|\right] \nonumber\\
														&  \hspace{1em} +2\mathbb{E} \left[|e_t||\int_{n\gamma}^t\nabla^2U(\tilde{x}_r)(\nabla U_{1,\gamma}(r,\tilde{x}_{n\gamma})+\nabla U_{2,\gamma}(r,\tilde{x}_{n\gamma}))\,dr|\right] \nonumber\\
														&  \hspace{1em} +2\sqrt{2}\mathbb{E} \left[e_t\left(-\int_{n\gamma}^t(\nabla^2 U(\tilde{x}_r)-\nabla^2 U(\tilde{x}_{n\gamma}))\,dw_r\right)\right] \nonumber\\
														&  \hspace{1em} +2\mathbb{E} \left[|e_t||\int_{n\gamma}^t( \vec{\Delta}(\nabla U)(\tilde{x}_r)- \vec{\Delta}(\nabla U)(\tilde{x}_{n\gamma}))\,dr|\right]\\
														& \leq  -(2m-3\varepsilon)\mathbb{E} \left[|e_t|^2\right] \nonumber\\
														&  \hspace{1em} +\frac{1}{\varepsilon}\mathbb{E} \left[\left|\int_{n\gamma}^t(\nabla^2 U(\tilde{x}_r)-\nabla^2 U(\tilde{x}_{n\gamma}))\nabla U_{\gamma}(\tilde{x}_{n\gamma})\,dr\right|^2\right] \nonumber\\
														&  \hspace{1em} +\frac{1}{\varepsilon}\mathbb{E} \left[\left|\int_{n\gamma}^t\nabla^2U(\tilde{x}_r)(\nabla U_{1,\gamma}(r,\tilde{x}_{n\gamma})+\nabla U_{2,\gamma}(r,\tilde{x}_{n\gamma}))\,dr\right|^2\right] \nonumber\\
														&  \hspace{1em} +\frac{1}{\varepsilon}\mathbb{E} \left[\left|\int_{n\gamma}^t( \vec{\Delta}(\nabla U)(\tilde{x}_r)- \vec{\Delta}(\nabla U)(\tilde{x}_{n\gamma}))\,dr\right|^2\right] \nonumber\\
														&  \hspace{1em} -2\sqrt{2}\mathbb{E} \left[(e_t-e_{n\gamma})\int_{n\gamma}^t(\nabla^2 U(\tilde{x}_r)-\nabla^2 U(\tilde{x}_{n\gamma}))\,dw_r\right]  \nonumber\\ 
														& \hspace{1em} -2\sqrt{2}\mathbb{E}\left[e_{n\gamma}\int_{n\gamma}^t(\nabla^2 U(\tilde{x}_r)-\nabla^2 U(\tilde{x}_{n\gamma}))\,dw_r\right], \nonumber 
\end{align}
where the second inequality holds due to Young's inequality and the last term is zero. Then, by using the results in Lemma \ref{liprate2} and by taking $\varepsilon = \frac{m}{4}$, one obtains
\begin{align*}
&\frac{d}{dt} \mathbb{E} \left[|e_t|^2\right] \\
														& \leq  -m\mathbb{E} \left[|e_t|^2\right] \\
														&  \hspace{1em} +\frac{4}{m}\gamma^3\mathbb{E} \left[ (c_1L_1^2L_2^2|\tilde{x}_{n\gamma}-x^{\ast}|^4+c_2L_1^2L_2^2|\tilde{x}_{n\gamma}-x^{\ast}|^2) \right]\\
														&  \hspace{1em} +\frac{4}{m}\gamma^3\mathbb{E} \left[(4\gamma L_1^6|\tilde{x}_{n\gamma}-x^{\ast}|^2 + 4\gamma L_1^2L_2^2d^2 +4dL_1^4) \right]\\
														&  \hspace{1em} +\frac{4}{m} \gamma^3 \mathbb{E} \left[(d^{3/2}Lc_1|\tilde{x}_{n\gamma}-x^{\ast}|^2 +d^{3/2}Lc_2) \right]\\
														&\hspace{1em}+2\sqrt{2}\mathbb{E} \left[\int_{n\gamma}^t(\nabla U(x_r)-\nabla U(\tilde{x}_r))\,dr  \int_{n\gamma}^t(\nabla^2 U(\tilde{x}_r)-\nabla^2 U(\tilde{x}_{n\gamma}))\,dw_r \right]\\
														& \hspace{1em}+ 2\sqrt{2}\mathbb{E} \left[\int_{n\gamma}^t(\nabla U(\tilde{x}_r)- \nabla U(\tilde{x}_{n\gamma})- \nabla U_1(r,\tilde{x}_{n\gamma})-\nabla U_2(r,\tilde{x}_{n\gamma}))\,dr\right.\\
														&\hspace{8em}\left.\times \int_{n\gamma}^t(\nabla^2 U(\tilde{x}_r)-\nabla^2 U(\tilde{x}_{n\gamma}))\,dw_r \right]\\
														& \leq  -m\mathbb{E} \left[|e_t|^2\right] \\
														&  \hspace{1em} +\frac{4}{m}\gamma^3\mathbb{E} \left[ (c_1L_1^2L_2^2|\tilde{x}_{n\gamma}-x^{\ast}|^4+c_2L_1^2L_2^2|\tilde{x}_{n\gamma}-x^{\ast}|^2)\right]\\
														&  \hspace{1em} +\frac{4}{m}\gamma^3\mathbb{E} \left[(4\gamma L_1^6|\tilde{x}_{n\gamma}-x^{\ast}|^2 + 4\gamma L_1^2L_2^2d +4dL_1^4)\right]\\
														&  \hspace{1em} +\frac{4}{m} \gamma^3 \mathbb{E} \left[(d^{3/2}Lc_1|\tilde{x}_{n\gamma}-x^{\ast}|^2 +d^{3/2}Lc_2) \right]\\
														&\hspace{1em}+2\sqrt{2}\gamma^3\mathbb{E} \left[( c_8|x_{n\gamma}-x^{\ast}|^2+c_9|\tilde{x}_{n\gamma}-x^{\ast}|^2+c_{10}) \right]\\
														& \hspace{1em}+ 2 \gamma^3\mathbb{E} \left[(c_5|\tilde{x}_{n\gamma}-x^{\ast}|^4+c_6|\tilde{x}_{n\gamma}-x^{\ast}|^2+c_7) \right]\\
														&\hspace{1em} +2\gamma^3\mathbb{E} \left[d( L_2^2c_1|\tilde{x}_{n\gamma}-x^{\ast}|^2+ L_2^2c_2)\right]
\end{align*} 
where the last inequality holds by using Cauchy-Schwarz inequality, Young's inequality and Lemma \ref{liprate2}, \ref{lipproblematic}, \ref{lip4terms}. Then, after simplification, one obtains
\begin{align*}
\frac{d}{dt} \mathbb{E} \left[|e_t|^2\right] 	& \leq -m\mathbb{E} \left[|e_t|^2\right]\\
&\quad+ \gamma^3\mathbb{E} \left[(c_{11}|\tilde{x}_{n\gamma}-x^{\ast}|^4+c_{12}|\tilde{x}_{n\gamma}-x^{\ast}|^2+c_{13}|x_{n\gamma}-x^{\ast}|^2+c_{14})\right],
\end{align*}
where $c_{11} =\frac{4}{m}L_1^2L_2^2c_1+2c_5$, $c_{12} = \frac{4}{m}(L_1^2L_2^2c_2 +4L_1^6+d^{3/2}Lc_1)+2\sqrt{2}c_9+2c_6+2dL_2^2c_1$, $c_{13} = 2\sqrt{2}c_8$ and $c_{14} = \frac{4}{m}(4L_1^2L_2^2d+4L_1^4d+d^{3/2}Lc_2)+2\sqrt{2}c_{10}+2c_7+ 2L_2^2dc_2$.
Then, the application of Gronwall's lemma yields
\begin{align*}
 \mathbb{E} \left[|e_t|^2\right] 	 &\leq e^{-m(t-n\gamma)}\mathbb{E} \left[|e_{n\gamma}|^2\right]\\
 &\quad+\gamma^4\mathbb{E} \left[(c_{11}|\tilde{x}_{n\gamma}-x^{\ast}|^4+c_{12}|\tilde{x}_{n\gamma}-x^{\ast}|^2+c_{13}|x_{n\gamma}-x^{\ast}|^2+c_{14})\right].
\end{align*}
Finally, by induction, Proposition \ref{lip2ndmoment} and \ref{lip4thmoment}, one obtains
\begin{align*}
& \mathbb{E}\left[|e_{(n+1)\gamma}|^2\right]	\\
 													& \leq e^{-m\gamma (n+1)}\mathbb{E}\left[|e_0|^2\right]+\frac{\gamma^4c_{14}}{1-e^{-m\gamma}}+\gamma^4c_{11} \sum_{k=0}^n\mathbb{E}\left[|\tilde{x}_{k\gamma}-x^{\ast}|^4\right]e^{-m\gamma (n-k)}\\
 													&\hspace{1em}+\gamma^4c_{12} \sum_{k=0}^n\mathbb{E}\left[|\tilde{x}_{k\gamma}-x^{\ast}|^2\right]e^{-m\gamma (n-k)}+\gamma^4c_{13} \sum_{k=0}^n\mathbb{E}\left[|x_{k\gamma}-x^{\ast}|^2\right]e^{-m\gamma (n-k)}\\
 													& \leq e^{-m\gamma (n+1)}\mathbb{E}\left[|x_0-\tilde{x}_0|^2\right]+\frac{\gamma^3 e^{m }}{m}\left(c_{14}+c_{11}\left(\mathbb{E}\left[|\tilde{x}_0-x^{\ast}|^4\right] +\frac{8q_2}{\tilde{m}}\right)\right. \\
 													&\quad \left. +c_{12}\left(\mathbb{E}\left[|\tilde{x}_0-x^{\ast}|^2\right] +\frac{q_1}{\tilde{m}}\right)+c_{13}\left(\mathbb{E}\left[|x_0-x^{\ast}|^2\right] +2d\right) \right)
\end{align*}
where the last inequality holds by using $1-e^{-m \gamma } \geq m \gamma e^{-m \gamma}$. The application of Theorem 1 in \cite{DM16} with the initial distribution $\zeta_0$ yields
\[
W_2^2(\delta_x \tilde{R}_{\gamma}^n, \pi) \leq e^{-mn\gamma}\left(2|x-x^{\ast}|^2+\frac{2d}{m}\right)+\bar{C}\gamma^3,
\]
where $\bar{C} =O(d^4)$ with
\[
\bar{C} = \frac{  e^{m }}{m}\left(c_{14}+c_{11}\left(|x-x^{\ast}|^4+\frac{8q_2}{\tilde{m}}\right)+c_{12}\left(|x-x^{\ast}|^2+\frac{q_1}{\tilde{m}}\right)+c_{13}\left(\frac{d}{m} +2d\right) \right)
\]

\noindent \textbf{Proof of Corollary \ref{thmwassersteingau}.} In the case that the target distribution $\pi$ is a multivariate Gaussian distribution, by using the same arguments, one notices that for $\gamma \in \left(0,\frac{1}{\tilde{m}} \wedge  \frac{8\tilde{m}^2}{m( 2L_1^2+7\tilde{m}L_1)}\right)$, Proposition \ref{lip2ndmoment} holds with $q_1 = (4L_1^2 +4)d$. Then, one obtains the following bound
\begin{align*}
 \mathbb{E} \left[|e_t|^2\right] 	 \leq e^{-m(t-n\gamma)}\mathbb{E} \left[|e_{n\gamma}|^2\right]+\gamma^4\mathbb{E} \left[\frac{4}{m}\left(4L_1^6 |\tilde{x}_{n\gamma}-x^{\ast}|^2 +4dL_1^4\right)\right],
\end{align*}
which indicates
\[
W_2^2(\delta_x \tilde{R}_{\gamma}^n, \pi) \leq e^{-mn\gamma}\left(2|x-x^{\ast}|^2+\frac{2d}{m}\right)+\tilde{C}\gamma^3,
\]
where $\tilde{C}= \frac{16L_1^4e^m}{m^2}\left(d+ L_1^2\left(|x-x^{\ast}|^2+\frac{q_1}{\tilde{m}}\right)\right) $.

\subsection{Example: Logistic regression with Gaussian prior} \label{exlip}
We provide an example of the logistic regression in dimension $d$. Denote by $\theta_k$, $k \in \mathbb{N}$ the k-th iteration of the algorithm \eqref{eqnschemelip}. One observes a sequence of i.i.d. sample $\{(x_i, y_i)\}_{i = 1, \dots, n}$, where $x_i \in \mathbb{R}^d$ and $y_i \in \{0,1\}$ for all $i$. The likelihood function is given by $p(y_i |x_i, \theta) = (1/(1+e^{-x_i^{\mathsf{T}}\theta}))^{y_i}(1-1/(1+e^{-x_i^{\mathsf{T}}\theta}))^{1-y_i}$. Consider a Gaussian prior with mean zero and covariance matrix proportional to the inverse of the matrix $\Sigma_X = \frac{1}{n}\sum_{i = 1}^nx_ix_i^{\mathsf{T}}$. For $\theta \in \mathbb{R}^d$, the gradient $\nabla U(\theta)$ and Hessian $\nabla^2 U(\theta)$ with $n$ data points are
\[
\nabla U(\theta) =c \Sigma_X \theta +  \sum_{i = 1}^n\left( \frac{x_i}{1+e^{-x_i^{\mathsf{T}}\theta}} - y_ix_i\right), \quad
\nabla^2 U(\theta) =c\Sigma_X + \sum_{i = 1}^n \frac{x_ix_i^{\mathsf{T}}e^{-x_i^{\mathsf{T}} \theta}}{(1+e^{-x_i^{\mathsf{T}} \theta})^2},
\]
where $c>0$. This implies that $L_1 \leq (c+n)\max_i|x_i x_i^{\mathsf{T}}|$ with $|x_i x_i^{\mathsf{T}}|$ the spectral norm of the matrix $x_i x_i^{\mathsf{T}}$ for each $i$. One notices that $\max_i|x_i x_i^{\mathsf{T}}|$ is much smaller than $\max_i|x_i|^2 = O(d)$ due to the fact that the matrix $x_i x_i^{\mathsf{T}}$ is typically sparse in statistical and machine learning applications. One may refer to dimension reduction techniques in sparse matrices in data science for more discussions, see e.g. \cite{sparsedr1} and \cite{sparsedr2}. 

To calculate the Lipschitz constant $L_2$ in \ref{h6}, one denotes $g(\lambda) = \nabla^2 U(\lambda y + (1-\lambda)x)$, for any $x, y \in \mathbb{R}^d$ and $\lambda \in [0,1]$. By fundamental theorem of calculus, one obtains, for any $l = 1, \dots, d$
		\begin{align*}
		g^{(l,\cdot)}(1) - g^{(l,\cdot)}(0) 	&=  \nabla^2 U^{(l,\cdot)}(y) -  \nabla^2 U^{(l,\cdot)}(x) \\
												&= \int_0^1 \nabla^2 ( \nabla U)^{(l)}(\lambda y+(1-\lambda )x)(y-x) \, d\lambda,
		\end{align*}
		where $ \nabla^2 ( \nabla U)^{(l)}$ is a matrix with $(j,k)$-th entry $\frac{\partial^3 U}{\partial x^l \partial x^j \partial x^k}$ and for any $\theta \in \mathbb{R}^d$
		\[
		  \nabla^2 ( \nabla U)^{(l)}(\theta)  = \sum_{i = 1}^n \left( \frac{2x_i^{(l)}x_i x_i^{\mathsf{T}}e^{-2x_i^{\mathsf{T}} \theta}}{(1+e^{-x_i^{\mathsf{T}} \theta})^3}- \frac{x_i^{(l)}x_i x_i^{\mathsf{T}}e^{-x_i^{\mathsf{T}} \theta}}{(1+e^{-x_i^{\mathsf{T}} \theta})^2} \right) 
		\]
		Moreover, one notices
		\begin{align*}
		|\nabla^2 U(y) - \nabla^2U(x)| & \leq \left(\sum_{l = 1}^d \left|\nabla^2 U^{(l,\cdot)}(y) -  \nabla^2 U^{(l,\cdot)}(x) \right|^2\right)^{1/2}\\
												& \leq 3n\max_i|x_i||x_i x_i^{\mathsf{T}}||y-x| 
		\end{align*}
which implies $L_2 = 3n\max_i|x_i||x_i x_i^{\mathsf{T}}|$. 

Finally, for the constant $L$ in \ref{h7}, define for any $k = 1, \dots, d$, $f_k(\lambda) = \nabla^2(\nabla U)^{(k)}(\lambda y + (1-\lambda)x)$, for any $x, y \in \mathbb{R}^d$ and $\lambda \in [0,1]$, and one uses the same technique to obtain, for any $l= 1,\dots, d$
		\begin{align*}
		f_k^{(l,\cdot)}(1) - f_k^{(l,\cdot)}(0) 	&= (\nabla^2(\nabla U)^{(k)})^{(l,\cdot)}(y) -  (\nabla^2(\nabla U)^{(k)})^{(l,\cdot)}(x)\\
											& = \int_0^1 \nabla^2(\nabla^2 U)^{(k,l)}(\lambda y+(1-\lambda )x)(y-x) \, d\lambda.
		\end{align*}
		where $\nabla^2 ( \nabla^2 U)^{(k,l)}$ is a matrix with $(j,m)$-th entry $\frac{\partial^4 U}{\partial x^k \partial x^l\partial x^j \partial x^m}$ and for any $\theta \in \mathbb{R}^d$
		\begin{align*}
		\nabla^2(\nabla^2 U)^{(k,l)} (\theta)  & \leq \sum_{i = 1}^n \left| \frac{x_i^{(k)}x_i^{(l)}x_ix_i^{\mathsf{T}}e^{-x_i^{\mathsf{T}} \theta}}{(1+e^{-x_i^{\mathsf{T}} \theta})^2} \right.\\
				&\hspace{6em}\left. - \frac{6x_i^{(k)}x_i^{(l)}x_ix_i^{\mathsf{T}}e^{-2x_i^{\mathsf{T}} \theta}}{(1+e^{-x_i^{\mathsf{T}} \theta})^3} + \frac{6x_i^{(k)}x_i^{(l)}x_ix_i^{\mathsf{T}}e^{-3x_i^{\mathsf{T}} \theta}}{(1+e^{-x_i^{\mathsf{T}} \theta})^4}\right| 
		\end{align*}
		Then, one obtains for $k = 1, \dots, d$,
		\begin{align*}
		&|\nabla^2(\nabla U)^{(k)}(y) - \nabla^2(\nabla U)^{(k)}(x)| \\
											& =  \left(\sum_{l = 1}^d  \left|(\nabla^2(\nabla U)^{(k)})^{(l,\cdot)}(y) -  (\nabla^2(\nabla U)^{(k)})^{(l,\cdot)}(x)\right|^2\right)^{1/2}\\
											& \leq 13n\max_i|x_i^{(k)}||x_i||x_i x_i^{\mathsf{T}}| |y-x|,
		\end{align*}		
		which implies $L  \leq   13n\max_i|x_i^{(k)}||x_i||x_i x_i^{\mathsf{T}}|$, and effectively, it has the same dimension dependence as $L_2$.

\appendix

\section{Proof of Remark \ref{poly}}\label{remark2}
\ref{h3} states there exists $L>0$, $\rho \geq 2$, and $\beta \in (0,1]$, such that for any $i = 1, \dots, d$ and for all $x,y \in \mathbb{R}^d$,
\[
|\nabla^2(\nabla U)^{(i)}(x)-\nabla^2(\nabla U)^{(i)}(y)|\leq L(1+|x|+|y|)^{\rho -2}|x-y|^{\beta}.
\]
By {\bf H2}, one obtains
\[
|\nabla^2(\nabla U)^{(i)}(x)| \leq  L(1+|x|)^{\rho -2}|x|^{\beta} +|\nabla^2(\nabla U)^{(i)}(0)| \leq K(1+|x|)^{\rho -2+\beta},
\]
where $K = \max\{L, |\nabla^2(\nabla U)^{(i)}(0)|\}$. Then by fundamental theorem of calculus,
\begin{align*}
|\nabla(\nabla U)^{(i)}(x)-\nabla(\nabla U)^{(i)}(y)|	& =\left|\int_0^1 \nabla^2(\nabla U)^{(i)}(tx +(1-t)y)\,dt (x-y)\right| \\
																& \leq \int_0^1 |\nabla^2(\nabla U)^{(i)}(tx +(1-t)y)|\,dt |x-y|\\
																& \leq \int_0^1 K(1+|x|+|y|)^{\rho -2+\beta}\, dt |x-y|\\
																&\leq K(1+|x|+|y|)^{\rho -2+\beta}|x-y|.
\end{align*}
Moreover, notice that
\begin{align*}
|\nabla^2 U(x)-\nabla^2 U(y)|		&\leq |\nabla^2 U(x)-\nabla^2 U(y)|_{\mathsf{F}}\\
 										& = \left(\sum_{i = 1}^d \sum_{j = 1}^d \left|\frac{\partial^2 U(x)}{\partial x^{(i)}\partial x^{(j)}}  - \frac{\partial^2 U(y)}{\partial x^{(i)}\partial x^{(j)}} \right|^2\right)^{1/2}\\
 										& = \left(\sum_{i = 1}^d |\nabla(\nabla U)^{(i)}(x)-\nabla(\nabla U)^{(i)}(y)|^2 \right)^{1/2}\\
 										& \leq \sqrt{d}K(1+|x|+|y|)^{\rho -2+\beta}|x-y|.
\end{align*}
Furthermore, one obtains
\begin{align*}
&|\vec{\Delta}(\nabla U)(x) -\vec{\Delta}(\nabla U)(y)|	\\
& =  \left(\sum_{i = 1}^d  \left|\sum_{j = 1}^d \frac{\partial^3 U(x)}{\partial x^{(i)}\partial x^{(j)}\partial x^{(j)}}  - \frac{\partial^3 U(y)}{\partial x^{(i)}\partial x^{(j)}\partial x^{(j)}} \right|^2\right)^{1/2}\\
																& \leq \left(d\sum_{i = 1}^d \sum_{j = 1}^d \left| \frac{\partial^3 U(x)}{\partial x^{(i)}\partial x^{(j)}\partial x^{(j)}}  - \frac{\partial^3 U(y)}{\partial x^{(i)}\partial x^{(j)}\partial x^{(j)}} \right|^2\right)^{1/2}\\
																& \leq \left(d\sum_{i = 1}^d |\nabla^2(\nabla U)^{(i)}(x)-\nabla^2(\nabla U)^{(i)}(y)|_{\mathsf{F}}^2\right)^{1/2}\\
																&  \leq \left(d^2\sum_{i = 1}^d L^2(1+|x|+|y|)^{2\rho -4}|x-y|^{2\beta}\right)^{1/2}\\
																& =  d^{3/2}L(1+|x|+|y|)^{\rho-2}|x-y|^{\beta}.
\end{align*}
Notice that the last inequality in Remark \ref{poly} is not obtained directly by using the above result, but it is obtained by using the arguments in page 24 of \cite{dal17user}. However, the rest of the inequalities in Remark \ref{poly} can be obtained by using similar arguments as above. 

\section{Proof of inequality \eqref{logsob} in Proposition \ref{momentbound} }\label{logsobproof}
In order to prove \eqref{logsob}, one needs the following definition and the propositions. 
\begin{definition} Consider a probability measure space $(\mathbb{R}^d,\mathcal{B}(\mathbb{R}^d), \nu)$. Let $\mathcal{A}$ be the set of continuously differentiable, Lipschitz functions on $\mathbb{R}^d$. We say that $\nu$ satisfies a Log-Sobolev inequality if there exists $C>0$ such that
\[
\mathrm{Ent}_{\nu}(f^2) \leq 2 C\int_{\mathbb{R}^d}|\nabla f|^2 \,d\nu,
\]
for every function $f \in \mathcal{A}$ with $\mathrm{Ent}_{\nu}(f^2log^{+}f^2)< \infty$, where
\[
\mathrm{Ent}_{\nu}(f) = \mathbb{E}_{\nu}(f\log f) - \mathbb{E}_{\nu}(f)\log \mathbb{E}_{\nu}(f).
\]
\end{definition}
For more details about the definition of the Log-Sobolev inequality, please refer to Chapter 2 in \cite{ledouxappendix}.
\begin{prop}[Proposition 5.4.1 in \cite{ledoux}] \label{expint} If $\nu$ satisfies a logarithmic Sobolev inequality with constant $C >0$, then for every 1-Lipschitz function $f$ and every $\alpha^2 <1/C$,
\[
\int_{\mathbb{R}^d} e^{\alpha^2 f^2/2} \,d\nu < \infty.
\]
More precisely, any 1-Lipschitz function $f$ is integrable and for every $s \in \mathbb{R}$,
\[
\int_{\mathbb{R}^d} e^{sf} \,d\nu < e^{s \int_{\mathbb{R}^d} f\,d\nu + Cs^2/2}.
\]
\end{prop}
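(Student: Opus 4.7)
The strategy I would follow is the classical \emph{Herbst argument}. Introduce the moment generating function $F(s) = \int_{\mathbb{R}^d} e^{sf}\,d\nu$. Since the statement concerns a (possibly unbounded) $1$-Lipschitz function, I would first establish the Laplace-transform bound under the additional assumption that $f$ is bounded, so that $F$ is smooth on $\mathbb{R}$ and $g = e^{sf/2}$ automatically lies in the class of functions to which the log-Sobolev inequality applies. Boundedness will then be removed by truncating $f_N := \max(-N,\min(f,N))$, applying the derived inequality to $f_N$, and passing to the limit via Fatou and monotone convergence.

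Next, I would insert $g^2 = e^{sf}$ into the LSI. Using $|\nabla g|^2 = (s^2/4)|\nabla f|^2 e^{sf}$ together with $|\nabla f| \leq 1$ a.e., the LSI collapses to
\[
s F'(s) - F(s) \log F(s) \;\leq\; \frac{Cs^2}{2}\, F(s), \qquad s > 0.
\]
Dividing by $s^2 F(s) > 0$, the left-hand side is precisely $\tfrac{d}{ds}\bigl[s^{-1}\log F(s)\bigr]$, reducing the problem to the scalar inequality $\tfrac{d}{ds}[s^{-1}\log F(s)] \leq C/2$. Integrating this from $0^+$ to $s$, evaluating $\lim_{s\to 0^+} s^{-1}\log F(s) = \int f\,d\nu$ via the Taylor expansion $F(s) = 1 + s \int f\,d\nu + O(s^2)$ (legitimate for bounded $f$), and multiplying through by $s$ yields the claim $\log F(s) \leq s \int f\,d\nu + Cs^2/2$ for $s > 0$. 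The case $s < 0$ is recovered by applying the same argument with $-f$ in place of $f$, which is again $1$-Lipschitz.

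For the exponential integrability part, the just-derived Laplace bound combined with Chernoff/Markov yields the sub-Gaussian tail $\nu\bigl(|f - \int f\,d\nu| > t\bigr) \leq 2 e^{-t^2/(2C)}$ after optimizing over $s$. A layer-cake calculation then shows $\int e^{\beta (f - \int f\,d\nu)^2/2}\,d\nu < \infty$ whenever $\beta < 1/C$. To bridge to $f^2$ itself, I would use the elementary inequality $f^2 \leq (1+\varepsilon)(f - \int f\,d\nu)^2 + (1+\varepsilon^{-1})(\int f\,d\nu)^2$ and choose $\varepsilon > 0$ small enough so that $\alpha^2(1+\varepsilon) < 1/C$; this delivers $\int e^{\alpha^2 f^2/2}\,d\nu < \infty$ for every $\alpha^2 < 1/C$.

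The principal difficulty is not the ODE manipulation, which is very short once set up, but rather ensuring that $g = e^{sf/2}$ is a legitimate test function for the LSI: one needs $g$ continuously differentiable, Lipschitz, and with finite $\int g^2 \log^+(g^2)\,d\nu$. For bounded $f$ this is immediate; for general $1$-Lipschitz $f$ the truncation-and-pass-to-the-limit step is the only nontrivial measure-theoretic point, and care must be taken in justifying the convergences $\int f_N\,d\nu \to \int f\,d\nu$ and $F_N(s) \to F(s)$, both of which follow a posteriori from the very bound being proved (applied first to $f_N$).
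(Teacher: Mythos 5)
The paper does not supply a proof of Proposition~\ref{expint}: it is quoted from Ledoux's monograph (Proposition~5.4.1 there) as a ready-made tool for Appendix~\ref{logsobproof}, so there is no in-paper argument to compare against. Your Herbst argument is the canonical proof of exactly this statement and is correct in all essentials: recognising $sF'(s)-F(s)\log F(s)$ as the entropy of $g^2=e^{sf}$, bounding $\int|\nabla g|^2\,d\nu$ by $(s^2/4)F(s)$ using $|\nabla f|\le 1$, dividing by $s^2F(s)$ to identify $\frac{d}{ds}\bigl[s^{-1}\log F(s)\bigr]\le C/2$, evaluating the limit at $0^+$, integrating, reducing $s<0$ to the case of $-f$, and then extracting the Gaussian integrability from sub-Gaussian tails via Chernoff plus a layer-cake computation. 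The only point you underplay is regularity: the paper's LSI is stated for the class $\mathcal{A}$ of \emph{continuously differentiable} Lipschitz functions, and a generic $1$-Lipschitz $f$ need not be $C^1$, so $g=e^{sf/2}$ is not immediately admissible. A mollification of $f$ (which preserves the Lipschitz constant and yields bounded smooth approximants) should accompany the truncation step you describe before passing to the limit; this is routine and does not affect the substance of the argument.
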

\begin{prop} [Proposition 5.5.1 in \cite{ledoux}] \label{lsstdgaus}The standard Gaussian measure $\nu$ on the Borel sets of $\mathbb{R}^d$ satisfies, for every $f \in \mathcal{A}$,
\[
\mathrm{Ent}_{\nu}(f^2) \leq 2 \int_{\mathbb{R}^d}|\nabla f|^2 \,d\nu.
\]
\end{prop}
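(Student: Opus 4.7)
The plan is to establish this classical log-Sobolev inequality by semigroup interpolation along the Ornstein-Uhlenbeck flow, in the spirit of Bakry and Emery. Let $(P_t)_{t\ge 0}$ denote the Ornstein-Uhlenbeck semigroup, given for $f \in \mathcal{A}$ by Mehler's formula $P_t f(x) = \int_{\mathbb{R}^d} f(e^{-t} x + \sqrt{1-e^{-2t}}\,y)\,d\nu(y)$, with generator $Lf = \Delta f - x \cdot \nabla f$ reversible with respect to $\nu$. Two ingredients are essential: $P_t f \to \int f\,d\nu$ as $t \to \infty$, and the commutation $\nabla P_t f = e^{-t} P_t(\nabla f)$ obtained by differentiating Mehler's formula under the integral sign.

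By truncation and regularization (replacing $f$ by $(f^2+\delta)^{1/2}$, capped at a large constant), it suffices to treat the case $0 < \varepsilon \le f \le M$, where every quantity below is smooth. Setting $g=f^2$ and $g_t := P_t g$, I would write the entropy as the integral of a time derivative,
\[
\mathrm{Ent}_\nu(f^2) = -\int_0^\infty \frac{d}{dt} \int_{\mathbb{R}^d} g_t \log g_t\, d\nu \, dt,
\]
using $g_0 = f^2$ and $g_\infty = \int f^2 d\nu$ constant. Since $\partial_t g_t = L g_t$ and $L$ is symmetric in $L^2(\nu)$, integration by parts turns the integrand into $\int |\nabla g_t|^2 / g_t\,d\nu$.

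The key step uses the commutation $\nabla g_t = 2 e^{-t} P_t(f \nabla f)$ and a componentwise Cauchy-Schwarz for the Markov kernel $P_t$, giving
\[
|P_t(f \nabla f)|^2 \le P_t(f^2)\, P_t(|\nabla f|^2) = g_t \cdot P_t(|\nabla f|^2).
\]
Therefore $|\nabla g_t|^2/g_t \le 4 e^{-2t} P_t(|\nabla f|^2)$, and integrating against the invariant measure $\nu$ and then in $t \in (0,\infty)$ produces the constant $\int_0^\infty 4 e^{-2t}\,dt = 2$, yielding $\mathrm{Ent}_\nu(f^2) \le 2\int |\nabla f|^2\, d\nu$.

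The main obstacle is the Cauchy-Schwarz step: one must interpret $|P_t(f\nabla f)|^2$ componentwise and verify $(P_t(f\partial_i f))^2 \le P_t(f^2)\,P_t((\partial_i f)^2)$ for each $i$ before summing, which follows from the conditional Jensen inequality since $P_t$ is a probability kernel. A secondary technical point is justifying the interchange of $\nabla$ and $P_t$ and controlling the boundary behavior at $t=0$ and $t=\infty$; both are handled by the regularization mentioned above together with monotone/dominated convergence, invoking the hypothesis $\mathrm{Ent}_\nu(f^2 \log^+ f^2) < \infty$ to pass to the limit.
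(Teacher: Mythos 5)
Your proof is correct, and it is the standard Bakry--\'Emery semigroup interpolation argument — indeed, essentially the proof appearing in the cited source (Bakry, Gentil and Ledoux). The paper itself gives no proof here; it simply cites Proposition~5.5.1 of that book, so there is nothing to compare against beyond noting that yours matches the textbook route. All the mechanical steps check out: the commutation $\nabla P_t f = e^{-t}P_t(\nabla f)$ from Mehler's formula, the identity $-\frac{d}{dt}\int g_t\log g_t\,d\nu = \int |\nabla g_t|^2/g_t\,d\nu$ via the Dirichlet form (using $\int Lg_t\,d\nu=0$), the componentwise Cauchy--Schwarz $(P_t(f\,\partial_i f))^2\le P_t(f^2)\,P_t((\partial_i f)^2)$ before summing over $i$, the invariance $\int P_t(|\nabla f|^2)\,d\nu=\int|\nabla f|^2\,d\nu$, and the time integral $\int_0^\infty 4e^{-2t}\,dt=2$. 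The truncation $0<\varepsilon\le f\le M$ together with the integrability hypothesis on $\mathrm{Ent}_\nu(f^2\log^+ f^2)$ is the right way to justify the differentiations and the boundary limits at $t=0$ and $t=\infty$.
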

Proposition \ref{lsstdgaus} implies that, for a Gaussian measure $\nu$ with mean $\mu$ and covariance matrix $Q$, by using change of variables, one obtains for every $f \in \mathcal{A}$ on $\mathbb{R}^d$,
\begin{equation}\label{lsgaus}
\mathrm{Ent}_{\nu}(f^2) \leq 2 \int_{\mathbb{R}^d}( Q \nabla f ) \nabla f \,d\nu.
\end{equation}
One notes that the scheme \eqref{eqnscheme2} shows that for any $n \in \mathbb{N}$ and $x \in \mathbb{R}^d$, conditional on the previous step $\overline{X}_{n-1}=x$, $\overline{X}_{n}$ is a Gaussian random variable with mean $\mu (x) = x+ \mu_{\gamma}(x)\gamma$ where 
\[
\mu_{\gamma}(x) = -\nabla U_{\gamma}( x) +\frac{\gamma}{2}\left(\left(\nabla^2 U\nabla U\right)_{\gamma}( x)-\vec{\Delta}(\nabla U)_{\gamma}(x)\right),
\] 
and covariance matrix $Q(x) =2\gamma\left(\mathbf{I}_d - \gamma \nabla^2 U_{\gamma}(x)+\frac{\gamma^2}{3}(\nabla^2 U_{\gamma}(x))^2\right)$. 
Then, by using \eqref{lsgaus}, one obtains
\[
\mathrm{Ent}_{\nu}(f^2) \leq 2 \int_{\mathbb{R}^d} (Q \nabla f )\nabla f \,d\nu  \leq 2 \int_{\mathbb{R}^d}\frac{14}{3}\gamma |\nabla f|^2 \,d\nu.
\]
Therefore, applying Proposition \ref{expint} with $s=a$, $f = \sqrt{1+|x|^2}$ and $C =\frac{14}{3}\gamma$ yields the desired result, i.e.
\[
R_{\gamma}V_a(x) =\mathbb{ E}_x(V_a(\overline{X}_1))\leq e^{\frac{7}{3}\gamma a^2}\exp\left\{a\mathbb{ E}((1+|\overline{X}_1|^2)^{1/2}|\overline{X}_0=x)\right\}.
\]

\section{Proof of inequality \eqref{wassersteinindeq} in Theorem \ref{thmwasserstein}}\label{wassersteinind}
To obtain \eqref{wassersteinindeq}, one consider the following cases
\begin{enumerate}[label=(\roman*)]
\item If $m >\frac{7}{3}c^2$,
\begin{align*}
\begin{split}
&C\gamma^{3+\beta}\mathbb{E}\left[V_c(\bar{x}_0)\right]\sum_{k=0}^ne^{-\frac{7}{3}c^2\gamma k-m\gamma (n-k)} \\
& = C\gamma^{3+\beta}e^{-m\gamma n}\mathbb{E}\left[V_c(\bar{x}_0)\right]\sum_{k=0}^ne^{-\frac{7}{3}c^2\gamma k+m\gamma k}\\
	&= C\gamma^{3+\beta}e^{-m\gamma n}\mathbb{E}\left[V_c(\bar{x}_0)\right]\frac{e^{(n+1)(m-\frac{7}{3}c^2)\gamma}-1}{e^{(m-\frac{7}{3}c^2)\gamma}-1}\\
	& \leq C\gamma^{3+\beta}e^{-m\gamma n}\mathbb{E}\left[V_c(\bar{x}_0)\right]\frac{e^{n(m-\frac{7}{3}c^2)\gamma}}{1-e^{-(m-\frac{7}{3}c^2)\gamma}}\\
	&\leq \frac{C\mathbb{E}\left[V_c(\bar{x}_0)\right]}{m-\frac{7}{3}c^2}e^{m\gamma}\gamma^{2+\beta}e^{-\frac{7}{3}c^2(n+1)\gamma}.
\end{split}
\end{align*}
\item For the case $m<\frac{7}{3}c^2$, we have
\begin{align*}
\begin{split}
 &C\gamma^{3+\beta}e^{-m\gamma n}\mathbb{E}\left[V_c(\bar{x}_0)\right]\sum_{k=0}^ne^{-\frac{7}{3}c^2\gamma k+m\gamma k}\\
 &\leq C\gamma^{3+\beta}e^{-m\gamma n}\mathbb{E}\left[V_c(\bar{x}_0)\right]\frac{1}{1-e^{-(\frac{7}{3}c^2-m)\gamma}}\\
  	&\leq \frac{C\mathbb{E}\left[V_c(\bar{x}_0)\right]}{\frac{7}{3}c^2-m}e^{\frac{7}{3}c^2\gamma}\gamma^{2+\beta}e^{-m(n+1)\gamma}.
\end{split}
\end{align*}
\item As for the case $m=\frac{7}{3}c^2$, it can be shown that
\begin{align*}
&C\gamma^{3+\beta}e^{-m\gamma n}\mathbb{E}\left[V_c(\bar{x}_0)\right]\sum_{k=0}^ne^{-\frac{7}{3}c^2\gamma k+m\gamma k}	\\
& =C(n+1)\gamma^{3+\beta}e^{-m\gamma n}\mathbb{E}\left[V_c(\bar{x}_0)\right]\\
 &\leq \frac{C\mathbb{E}\left[V_c(\bar{x}_0)\right]}{m}e^{m\gamma}\gamma^{2+\beta}.
\end{align*}
\end{enumerate}

\section{Proof of inequality \eqref{lipintres} in Lemma \ref{lipproblematic}}\label{liprateMmvt}
For all $x, y \in \mathbb{R}^d$ and a constant $c>0$, denote by $g(t) = \nabla^2 U(x+tc(y-x))$. One notes that for any $i,j = 1, \dots,d$, $(g^{(i,j)})'(t) = c\sum_{k=1}^d \frac{\partial^3 U(x+tc(y-x))}{\partial x^{(i)} \partial x^{(j)}\partial x^{(k)}}(y^{(k)}-x^{(k)})$. By mean value theorem, there exists $t_{ij} \in [0,1]$, such that
\[
 \nabla^2 U^{(i,j)}(x+c(y-x))- \nabla^2 U^{(i,j)}(x) = g^{(i,j)}(1)-g^{(i,j)}(0)=(g^{(i,j)})'(t_{ij}).
\]
Then, one obtains
\begin{align*}
&|\nabla^2 U(x+c(y-x))- \nabla^2 U(x)|_{\mathsf{F}}\\
&=|g(1)- g(0)|_{\mathsf{F}} \\
									& =c \sqrt{\sum_{i,j=1}^d\left| \sum_{k=1}^d \frac{\partial^3 U(x+t_{ij}c(y-x))}{\partial x^{(i)} \partial x^{(j)}\partial x^{(k)}}(y^{(k)}-x^{(k)})\right|^2}\\
									& \leq \sqrt{d} L_2 |c(y-x)|,
\end{align*}
which, by sending $c$ to zero yields
\[
\sqrt{\sum_{i,j=1}^d\left| \sum_{k=1}^d \frac{\partial^3 U(x)}{\partial x^{(i)} \partial x^{(j)}\partial x^{(k)}}(y^{(k)}-x^{(k)})\right|^2}   \leq  \sqrt{d} L_2 |y-x|.
\]

\section{Proof of inequality \eqref{lipintres} in Lemma \ref{lipproblematic}} \label{fourthbd}
For any $x \in \mathbb{R}^d$, our goal is to find an upper bound for
\[
\sum_{i,j = 1}^d \left|\sum_{k = 1}^d\frac{\partial^4 U(x)}{\partial x^{(i)} \partial x^{(j)}\partial x^{(k)}\partial x^{(k)}} \right|^2 \leq d \sum_{k = 1}^d \sum_{i,j = 1}^d \left|\frac{\partial^4 U(x)}{\partial x^{(i)} \partial x^{(j)}\partial x^{(k)}\partial x^{(k)}} \right|^2.
\]
For any $i, j, k = 1, \dots, d$, for all $x, y \in \mathbb{R}^d$ and a constant $c>0$, define a function $g: \mathbb{R} \rightarrow \mathbb{R}^d$ by
\[
g^{(k)}_{(i,j)}(t) = (\nabla(\nabla^2 U)^{(i,j)}(x+tcy))^{(k)} =  \frac{\partial^3 U(x+tcy)}{\partial x^{(i)} \partial x^{(j)}\partial x^{(k)}}.
\]
One notes that by mean value theorem, there exists $t_k \in [0,1]$, such that
\begin{align*}
g^{(k)}_{(i,j)}(1) - g^{(k)}_{(i,j)}(0) 	&= ( \nabla(\nabla^2 U)^{(i,j)} (x+ cy))^{(k)}- ( \nabla(\nabla^2 U)^{(i,j)}(x))^{(k)}\\
											&= c \sum_{l = 1}^d(\nabla^2( \nabla^2 U)^{(i,j)}(x+t_k cy) )^{(k,l)}y^{(l)}.
\end{align*} 
Then, since
\begin{align*}
&\left| \nabla((\nabla^2 U)^{(i,j)}(x+ cy)) -  \nabla((\nabla^2 U)^{(i,j)}(x)) \right|  \\
&  = \left(\sum_{k = 1}^d \left| ( \nabla(\nabla^2 U)^{(i,j)} (x+ cy))^{(k)}- ( \nabla(\nabla^2 U)^{(i,j)}(x))^{(k)}\right|^2\right)^{1/2} \\
& =c\left(\sum_{k = 1}^d \left|  \sum_{l = 1}^d(\nabla^2( \nabla^2 U)^{(i,j)}(x+t_k cy) )^{(k,l)}y^{(l)}\right|^2\right)^{1/2} \\
& = \left(\sum_{k = 1}^d \left| \frac{\partial^3 U(x+cy)}{\partial x^{(i)} \partial x^{(j)}\partial x^{(k)}} -  \frac{\partial^3 U(x)}{\partial x^{(i)} \partial x^{(j)}\partial x^{(k)}}\right|^2\right)^{1/2}\\ 
& \leq \left|\nabla^2(\nabla U)^{(i)}(x+cy)-\nabla^2(\nabla U)^{(i)}(x)\right|_{\mathsf{F}}\\
& \leq \sqrt{d}Lc|y|,
\end{align*}
one obtains for any $i, j = 1, \dots, d$ and $x \in \mathbb{R}^d$,
\[
\left(\sum_{k = 1}^d \left|  \sum_{l = 1}^d(\nabla^2( \nabla^2 U)^{(i,j)}(x+t_k cy) )^{(k,l)}y^{(l)}\right|^2\right)^{1/2} \leq \sqrt{d}L|y|,
\]
which, by sending $c$ to zero yields 
\[
\left| \nabla^2( \nabla^2 U)^{(i,j)}(x) y\right| \leq \sqrt{d}L|y|
\]
and this implies $\left|\nabla^2( \nabla^2 U)^{(i,j)}(x) \right| \leq \sqrt{d}L$. Finally, we have for any $x \in \mathbb{R}^d$,
\begin{align*}
 d \sum_{k = 1}^d \sum_{i,j = 1}^d \left|\frac{\partial^4 U(x)}{\partial x^{(i)} \partial x^{(j)}\partial x^{(k)}\partial x^{(k)}} \right|^2 &\leq d \sum_{k = 1}^d \left|\nabla^2( \nabla^2 U)^{(k,k)}(x) \right|^2 _{\mathsf{F}}\\
& \leq d^2\sum_{k = 1}^d\left|\nabla^2( \nabla^2 U)^{(k,k)}(x) \right|^2 \leq d^4L^2.
\end{align*}


\end{document}